\theoremstyle{plain}
\newtheorem{theorem}{Theorem}[section]
\newtheorem{cor}[theorem]{Corollary}
\newtheorem{prop}[theorem]{Proposition}
\newtheorem{lemma}[theorem]{Lemma}
\newcounter{proofcount}
\newtheorem{thm}{Theorem}
\newtheorem*{conj}{Conjecture}
\theoremstyle{remark}
\newtheorem{claim}{Claim}
\newtheorem*{claim*}{Claim}
\newenvironment{claimproof}[1][Proof of Claim \theclaim.] 
{%
	\proof[#1]%
	
}
{%
	\endproof%
}
\newenvironment{claimproof*}[1][Proof of Claim.] 
{%
	\proof[#1]%
	
}
{%
	\endproof%
}
\theoremstyle{definition}
\newtheorem{remark}[theorem]{Remark}
\newtheorem{fact}[theorem]{Fact}
\newtheorem{definition}[theorem]{Definition}
\newcounter{substep}
\def\thesubstep{\arabic{substep}}
\newcounter{subsubstep}
\def\thesubsubstep{\arabic{subsubstep}}
\newcounter{substepC}
\def\thesubstepC{\arabic{substepC}}
\newcounter{subsubstepC}[substepC]
\def\thesubsubstepC{\arabic{subsubstepC}}
\newcommand{\nc}{\newcommand}
\nc{\Z}{\mathbb{Z}}
\nc{\Q}{\mathbb{Q}}
\nc{\N}{\mathbb{N}}
\nc{\F}{\mathbb{F}}
\nc{\UU}{\mathbb{U}}
\nc{\C}{\mathbb{C}}
\nc{\M}{\mathcal{M}}
\nc{\R}{\mathcal{R}}
\nc{\A}{\mathcal{A}}
\nc{\B}{\mathcal{B}}
\nc\LL{\mathcal L}
\nc\II{\mathcal I}
\nc\E{\mathcal E}
\nc{\stt}{\operatorname{St}}
\nc{\stab}{\operatorname{Stab}}
\nc{\GO}[1]{G_{#1}^{00}}
\nc{\sbgp}[1]{\langle\xspace {#1}\xspace\rangle}
\nc{\Conn}[1]{\langle\xspace {X}\xspace\rangle^{00}_{#1}}
\nc{\band}[1]{\bar d_{\mathcal{#1}}}
\nc\Def{\operatorname{Def}}
\nc{\dcl}{\operatorname{dcl}}
\nc{\acl}{\operatorname{acl}}
\nc\inv{ ^{-1}}
\nc{\im}{\operatorname{im}}
\nc{\tp}{\operatorname{tp}}
\nc\Spec{S^\mathrm{t}}
\nc\HS{S^\mathrm{h}}
\nc\U{\operatorname{U}}
\nc{\cf}{\text{cf.\,}}
\nc{\eg}{\text{e.g. }}
\nc{\Ext}{H^2}
\nc\bM{\overline{M}}
\def\Ind#1#2{#1\setbox0=\hbox{$#1x$}\kern\wd0\hbox to
	0pt{\hss$#1\mid$\hss} \lower.9\ht0\hbox to
	0pt{\hss$#1\smile$\hss}\kern\wd0}
\def\Notind#1#2{#1\setbox0=\hbox{$#1x$}\kern\wd0\hbox to
	0pt{\mathchardef\nn="0236\hss$#1\nn$\kern1.4\wd0\hss}\hbox to
	0pt{\hss$#1\mid$\hss}\lower.9\ht0 \hbox to
	0pt{\hss$#1\smile$\hss}\kern\wd0}
\title[]{Finite central extensions of o-minimal groups}
\address{Departamento de \'Algebra, Geometr\'ia y Topolog\'ia; Facultad de
	Matem\'aticas;
	Universidad Complutense de Madrid; 28040 Madrid, Spain}
\email{ebaro@ucm.es}
\email{dpalacin@ucm.es}
\date{\today}
\author{El\'ias Baro and Daniel Palac\'in}
\thanks{Both authors are supported by the project STRANO PID2021-122752NB-I00 and
	Grupos UCM 910444.}
\keywords{o-minimal; central extension; second cohomology.}
\subjclass[2020]{03C64, 22E15}
\begin{document}
	
	\begin{abstract} 
	 We answer in the affirmative a conjecture of Berarducci, Peterzil and Pillay \cite{BPP10} for solvable groups, which is an o-minimal version of a particular case of Milnor's isomorphism conjecture \cite{jM83}. We prove that every abstract finite central extension of a definably connected solvable definable group in an o-minimal structure is equivalent to a definable (hence topological) finite central extension. The proof relies on an o-minimal adaptation of the higher inflation-restriction exact sequence due to Hochschild and Serre. As in \cite{jM83}, we also prove in o-minimal expansions of real closed fields that the conjecture reduces to definably simple groups. 
	\end{abstract}

\maketitle	
	
	\section{Introduction}
A natural question for an infinite group $G$ is whether certain algebraic, topological, or even logical properties are preserved when passing to finite central extensions. For instance, one may ask if every abstract finite central extension of an arbitrary Lie group is a Lie group. This is a particular case of the more general Milnor's isomorphism conjecture \cite{jM83}, which still remains open. Nonetheless, it is known to be true for solvable groups \cite{jM83} and in other cases, see for example \cite{jM83,cS86} as well as \cite{DWS88}. 

In this paper, we aim to study a closely related conjecture concerning definable groups in o-minimal structures (expanding a dense linear order without end-points), due to Berarducci, Peterzil and Pillay \cite{BPP10}. We briefly recall their conjecture.

Let $G$ be an $\emptyset$-definable group in an o-minimal structure $M$ and let $Z$ be a finite abelian group, which we may assume to be $\emptyset$-definable in $M$ after naming its elements if necessary. Following Berarducci, Peterzil and Pillay \cite[Sec.\,1]{BPP10} (see also Section \ref{s:Cocycle} below), we say that an abstract finite central extension 
\[
0 \longrightarrow Z \stackrel{\mu}{\longrightarrow} \widehat G
\stackrel{\pi}{\longrightarrow} G \longrightarrow 1
\]
is {\em naturally interpretable} without parameters if it is equivalent to a central extension
\[
0\longrightarrow Z \stackrel{\mu_0}{\longrightarrow} \widehat G_0
\stackrel{\pi_0}{\longrightarrow} G \longrightarrow 1,
\]
in which the group $\widehat G_0$ and the homomorphisms $\pi_0:\widehat G_0\to G$ are definable in $M$, without additional parameters other than the ones needed to define $G$ and $Z$. Recall that two central extensions $\widehat{G}_1$ and $\widehat{G}_2$ of $G$ by $Z$ are
{\em (abstractly) equivalent} if there is an isomorphism $\gamma:\widehat G_1\to \widehat G_2$ such that the following diagram commutes
\[
\begin{tikzcd}
	0 \arrow{r}  & Z \arrow{r}{\mu_1}\arrow[equal]{d} & \widehat{G}_1      
	\arrow{r}{\pi_1}\arrow{d}{\gamma}        & G \arrow{r}\arrow[equal]{d}  & 1 \\
	0 \arrow{r} & Z \arrow{r}{\mu_2}  & \widehat{G}_2  \arrow{r}{\pi_2}       & G  
	\arrow{r} & 1 
\end{tikzcd}
\]
Notice that a naturally interpretable abstract finite central extension is in fact a topological extension, since every definable group in an o-minimal structure is endowed with Pillay's group topology \cite{aP88}.

Berarducci, Peterzil and Pillay \cite[p.\,474]{BPP10} conjectured the following: every abstract finite central extension of a definably connected definable group $G$  in an o-minimal structure $M$ is naturally interpretable in $M$. They proved it for abelian groups \cite[Thm.\,2.9]{BPP10} and established some equivalent characterisation for definably compact semisimple groups \cite[Thm.\,2.10]{BPP10}. Furthermore, as any group definable in an o-minimal expansion $R$ of the real field $\mathbb R$ is a real Lie group \cite{aP88}, it follows from \cite[Prop.\,2.2]{BPP10} that every abstract finite central extension of a definable  group $G$  in the o-minimal expansion $R$  is naturally interpretable in $R$, whenever $G$ satisfies Milnor's isomorphism conjecture. Consequently, Berarducci, Peterzil and Pillay's conjecture is also satisfied for many groups definable in $R$.

In this paper we solve the conjecture from \cite{BPP10} for solvable groups in the affirmative:

\begin{thm}[Theorem \ref{T:Sol}]\label{T:Sol_Intro}
Let $G$ be a definably connected solvable definable group  in an o-minimal
structure $M$. Then every abstract finite central extension of $G$ is naturally interpretable in $M$. 
\end{thm}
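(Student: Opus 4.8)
The plan is to argue by induction on the derived length of $G$, the base case being abelian groups, which is precisely \cite[Thm.\,2.9]{BPP10}. For the inductive step I set $N=[G,G]$ and $Q=G/N$. Since $G$ is definably connected and solvable, $N$ is a definably connected normal definable subgroup of strictly smaller derived length and $Q$ is a definably connected \emph{abelian} definable group; thus $N$ is governed by the induction hypothesis and $Q$ by the abelian case. Throughout, $Z$ is a fixed finite abelian group with trivial $G$-action.

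I would reformulate ``naturally interpretable'' cohomologically: central extensions of $G$ by $Z$ are classified by $H^2(G,Z)$, and the naturally interpretable ones form exactly the image of a comparison map $H^2_{\mathrm{def}}(G,Z)\to H^2(G,Z)$ from a definable group cohomology computed with definable cochains. The conjecture for $G$ is the surjectivity of this map, but I will carry its \emph{bijectivity} as the inductive hypothesis, since injectivity is what permits passage to $Q$-invariants below. The engine is an o-minimal avatar of the Hochschild--Serre higher inflation--restriction sequence attached to $1\to N\to G\to Q\to 1$, set up simultaneously for abstract and for definable cochains together with a comparison map between the two sequences. Two reductions make it usable. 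First, $H^1(N,Z)=\operatorname{Hom}(N,Z)=0$, both abstractly and definably: such a homomorphism factors through $N/[N,N]$, which is definably connected abelian and hence divisible, and a divisible group admits no nontrivial homomorphism into the finite group $Z$ (definably, the kernel would be a definable finite-index subgroup, impossible by definable connectedness). Consequently, in both categories, the inflation $H^2(Q,Z)\hookrightarrow H^2(G,Z)$ is injective with image $\ker(\operatorname{res})$, and the seven-term sequence collapses to the exact four-term piece
\[
0\to H^2(Q,Z)\xrightarrow{\operatorname{inf}} H^2(G,Z)\xrightarrow{\operatorname{res}} H^2(N,Z)^{Q}\xrightarrow{d_3} H^3(Q,Z).
\]
Second, the inductive bijectivity for $N$ makes $H^2_{\mathrm{def}}(N,Z)\to H^2(N,Z)$ a $Q$-equivariant isomorphism, hence an isomorphism on $Q$-invariants.

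The inductive step is then a chase between the definable and abstract four-term sequences. Given $\alpha\in H^2(G,Z)$, its restriction $\bar\alpha\in H^2(N,Z)^{Q}$ satisfies $d_3(\bar\alpha)=0$ and lifts uniquely to $\beta\in H^2_{\mathrm{def}}(N,Z)^{Q}$; the class $d_3^{\mathrm{def}}(\beta)$ maps to $d_3(\bar\alpha)=0$ in $H^3(Q,Z)$, so injectivity of $H^3_{\mathrm{def}}(Q,Z)\to H^3(Q,Z)$ forces $d_3^{\mathrm{def}}(\beta)=0$, i.e.\ $\beta=\operatorname{res}^{\mathrm{def}}(\gamma)$ for some definable $\gamma$. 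The image of $\gamma$ in $H^2(G,Z)$ has the same restriction as $\alpha$, so $\alpha$ differs from it by $\operatorname{inf}(\delta)$ with $\delta\in H^2(Q,Z)$; the abelian case realises $\delta$ definably, and assembling the pieces exhibits $\alpha$ as the image of a definable class. Injectivity for $G$ propagates by the symmetric chase, using injectivity of $\operatorname{res}^{\mathrm{def}}$ on invariants over $N$, injectivity of $\operatorname{inf}^{\mathrm{def}}$ (again from $H^1_{\mathrm{def}}(N,Z)=0$), and injectivity of the comparison over the abelian group $Q$.

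The main obstacle is constructing the o-minimal higher inflation--restriction sequence itself: one must develop definable group cohomology with definable cochains, prove exactness of the seven-term sequence in the definable category, and---most delicately---verify that the transgression $d_3$ and the entire sequence are compatible with their abstract counterparts under the comparison map, so that the chase above is legitimate. A secondary technical point is the abelian base case in the precise form required here, namely \emph{bijectivity} of the comparison in degree two (strengthening the surjectivity of \cite[Thm.\,2.9]{BPP10}) together with injectivity in degree three; both must be extracted directly from the structure of definably connected abelian o-minimal groups.
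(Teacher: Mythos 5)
Your overall strategy (cohomological reformulation, induction along a definable normal series, an o-minimal inflation--restriction sequence) is in the spirit of the paper, but the specific route has a genuine gap: the entire inductive step rests on machinery you acknowledge but do not construct, namely a definable four-term Hochschild--Serre sequence
\[
0\to H^2_{\mathrm{def}}(Q,Z)\to H^2_{\mathrm{def}}(G,Z)\to H^2_{\mathrm{def}}(N,Z)^{Q}\xrightarrow{\ d_3^{\mathrm{def}}\ } H^3_{\mathrm{def}}(Q,Z)
\]
compatible with the abstract one, \emph{together with} injectivity of the comparison $H^3_{\mathrm{def}}(Q,Z)\to H^3(Q,Z)$. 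The degree-3 injectivity is not a routine strengthening: in degree 2 it holds (this is Corollary \ref{C:Ext-Inclusion}, via Lemma \ref{L:h_def}) because a function $h$ cobounding a given $2$-cocycle on a definably connected group is \emph{unique} --- any two such differ by a homomorphism into the finite group $Z$, which must vanish by Fact \ref{F:Connected} --- and this uniqueness is what forces $h$ to be definable. In degree 3 the ambiguity in a cobounding $2$-cochain is a $2$-cocycle, not a homomorphism, so uniqueness fails and there is no analogous mechanism transferring definability; nothing in the paper (or in your sketch) supplies it. Similarly, exactness of the definable sequence at $H^2_{\mathrm{def}}(N,Z)^{Q}$ (that $\ker d_3^{\mathrm{def}}=\operatorname{im}\operatorname{res}^{\mathrm{def}}$) is precisely what the paper's Theorem \ref{T:Ext-Sequence} deliberately avoids: that theorem proves only the three-term sequence, and does so by a group-theoretic quotient construction (using that the kernel $A$ has no proper finite-index subgroup), not by definable cochain calculus, exactly because the full spectral-sequence apparatus is unavailable in the definable category.

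What you miss is a structural fact that collapses your sequence and makes the transgression unnecessary. For definably connected solvable $G$, the quotient $G/N(G)$ by the maximal normal definable torsion-free subgroup is definably compact, hence abelian (\cite[Thm.\,4.3]{aC09}, \cite[Prop.\,2.2]{CP12}); in particular your $N=[G,G]$ is contained in $N(G)$ and is therefore \emph{torsion-free}. The paper proves (Proposition \ref{P:Torsion-free}) that $\Ext(H,Z)=0$ for every torsion-free definable group $H$ --- itself by an induction on dimension through $H'$, with the abelian case done by an explicit splitting using unique divisibility. Consequently the restriction target in your sequence vanishes in both categories, $d_3$ never enters, and inflation from the abelian quotient is already an isomorphism; this is exactly the paper's proof of Theorem \ref{T:Sol}, run with the single exact sequence $1\to N(G)\to G\to G/N(G)\to 1$, the three-term sequence of Theorem \ref{T:Ext-Sequence} (Corollary \ref{C:Key_Diagram}), and the abelian case \cite[Thm.\,2.9]{BPP10} applied to $G/N(G)$. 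So your proposal is repairable, but only by replacing its two unproved pillars (definable $H^3$ and the definable transgression) with the vanishing result for torsion-free groups, at which point it becomes the paper's argument.
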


Notice that this is analogous to Milnor's result in \cite{jM83} for solvable Lie groups, and generalises the abelian case from \cite[Thm.\,2.9]{BPP10}. To prove our result it is more convenient to reformulate the o-minimal conjecture in cohomological terms. 

In Section \ref{s:Cocycle} we introduce a definable version $\Ext_d(G,Z)$ of the second \mbox{cohomology} group $\Ext(G,Z)$, which was first considered by Edmundo in \cite{mE03}. We prove that $\Ext_d(G,Z)$ is canonically embeddable into $\Ext(G,Z)$ and it is in one-to-one correspondence with the equivalence classes of naturally interpretable abstract extensions of $G$ by $Z$ (Lemma \ref{L:Natural_Int}). The conjecture from \cite{BPP10} can then be restated as follows: 
\begin{conj}
The natural inclusion $\Ext_d(G,Z)\hookrightarrow \Ext(G,Z)$ is an isomorphism.
\end{conj}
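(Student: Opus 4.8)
The map $\Ext_d(G,Z)\hookrightarrow\Ext(G,Z)$ is injective by construction (Lemma~\ref{L:Natural_Int}), so the whole content of the statement is \emph{surjectivity}: every finite central extension of $G$ by $Z$ is equivalent to one definable over the parameters needed for $G$ and $Z$ alone. Equivalently, every class in $\Ext(G,Z)$ admits an $\emptyset$-definable $2$-cocycle representative (up to coboundary). The plan is to reduce this to the building blocks of $G$, using the structure theory of definably connected definable groups together with an o-minimal inflation--restriction machinery, and then to establish the statement on each block.

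First I would run the reduction along a normal definable filtration of $G$. Writing $R$ for the solvable radical (the maximal normal definable connected solvable subgroup), the quotient $G/R$ is semisimple; more generally one may quotient by any normal definable connected $N\trianglelefteq G$. For such $N$ the o-minimal adaptation of the Hochschild--Serre inflation--restriction sequence relates $\Ext(G,Z)$ to $\Ext(G/N,Z)$ and to the low-degree cohomology of $N$ equipped with its induced $G/N$-action. The crucial point is that this long exact sequence restricts to the definable cohomology groups $\Ext_d$, i.e.\ inflation, restriction and transgression all send $\emptyset$-definable cochains to $\emptyset$-definable ones. Granting this, a class in $\Ext(G,Z)$ is definably representable as soon as its inflation from $G/N$ and its restriction to $N$ are, so the problem splits into a solvable part built from $R$ and a semisimple part $G/R$.

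For the solvable part I would induct along the derived series of $R$, whose successive quotients are definably connected abelian. There the conjecture is already known by Berarducci--Peterzil--Pillay \cite[Thm.\,2.9]{BPP10}, which furnishes the base case; feeding these into the inflation--restriction sequence and checking at each stage that the connecting and transgression maps preserve $\emptyset$-definability yields that every class supported on $R$ is definable. This is precisely Theorem~\ref{T:Sol_Intro}, which I take as established. The delicate technical issue here is parameter bookkeeping: the maps produced by the spectral-sequence argument are a priori only $M$-definable, and one must extract genuinely $\emptyset$-definable representatives, which is where the canonical embedding of $\Ext_d$ into $\Ext$ and the finiteness of $Z$ are used.

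The main obstacle is the semisimple factor $G/R$. Following Milnor's strategy in the Lie setting, and working inside an o-minimal expansion of a real closed field, I would reduce further to the case of definably simple $G$: a semisimple definably connected group is, modulo finite central pieces, an almost-direct product of definably simple groups, and the cohomological long exact sequences again propagate definability across such products. This reduces the full statement to its definably simple instances. Proving surjectivity there is genuinely hard --- it is the o-minimal shadow of the still-open simple case of Milnor's conjecture --- so the structural reduction alone does not yield the displayed statement for arbitrary $G$; what the method secures unconditionally is the solvable case (Theorem~\ref{T:Sol_Intro}) together with the reduction of the general conjecture to definably simple groups.
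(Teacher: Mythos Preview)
Your outline is faithful to the paper's actual programme: the displayed statement is a \emph{conjecture}, not a theorem, and the paper establishes precisely the two pieces you isolate---the solvable case (Theorem~\ref{T:Sol}) and, over a real closed field, the reduction to definably simple factors (Corollary~\ref{C:Reduction})---while leaving the simple case open, as you correctly note.

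A few technical points where your sketch and the paper diverge are worth recording. Injectivity of $\Ext_d\hookrightarrow\Ext$ is Corollary~\ref{C:Ext-Inclusion} (resting on Lemma~\ref{L:h_def}), not Lemma~\ref{L:Natural_Int}. For the solvable case the paper does not climb the derived series of the radical $R$ as you propose; instead it first shows $\Ext(G,Z)=0$ outright for torsion-free $G$ (Proposition~\ref{P:Torsion-free}, where a derived-series induction does appear internally) and then uses the maximal torsion-free normal subgroup $N(G)$ to reduce to the definably compact abelian quotient $G/N(G)$, where \cite[Thm.~2.9]{BPP10} applies. The reduction to simple factors (Theorem~\ref{T:Reduction}) is likewise not a clean radical/semisimple split via $R$: it threads through $N(G)$, maximal definably compact subgroups (Fact~\ref{fact:conversano}), and almost-direct-product decompositions (Lemma~\ref{L:AlmostProd}), and in particular needs Proposition~\ref{P:Def_Contractible} to push classes from a maximal compact back up to $G$. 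Finally, the paper never uses transgression or spectral-sequence machinery: the elementary five-term inflation--restriction sequence of Theorem~\ref{T:Ext-Sequence} (together with Lemma~\ref{L:Inf-Ret} for finite kernels) suffices throughout, so your worry about ``parameter bookkeeping'' along higher differentials does not arise.
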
 

To approach this isomorphism conjecture we first prove the following result for definable o-minimal groups by considering second cohomology $\Ext_d$ groups. The proof adapts Neeb's proof of \cite[Thm.\,1.5]{kN02}, and the statement corresponds to an o-minimal adaptation of the inflation-restriction exact sequence \cite[Chap.\,III]{HS53} due to Hochschild and Serre.

\begin{thm}[Theorem \ref{T:Ext-Sequence}] Let $A$ be a definably connected group  and let
\[
1\longrightarrow A \stackrel{\iota}{\longrightarrow} B
\stackrel{\beta}{\longrightarrow} C \longrightarrow 1
\]
be an $\emptyset$-definable short exact sequence of groups in an o-minimal structure, where $\iota$ is the inclusion map. Then, for every finite abelian group $Z$ we have
\[
0\longrightarrow \Ext_d(C,Z) \stackrel{\beta_d^*}{\longrightarrow} \Ext_d(B,Z)
\stackrel{\iota_d^*}{\longrightarrow} \Ext_d(A,Z)
\]
is exact, where $\iota_d^*([f]) = [f|_{A\times A}]$ is the restriction map and $\beta_d^*([f]) = [f\circ
(\beta\times \beta)]$ is the inflation.

\end{thm}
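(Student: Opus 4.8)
The plan is to argue exactness directly at the level of definable $2$-cocycles, the driving principle throughout being that a definably connected group admits no nontrivial definable homomorphism to a finite group; that is, $\operatorname{Hom}_d(A,Z)=0$. Exactness amounts to three assertions: that the composite $\iota_d^*\circ\beta_d^*$ vanishes, that $\beta_d^*$ is injective, and that $\ker\iota_d^*\subseteq\im\beta_d^*$. The first is immediate, since for $a_1,a_2\in A$ one has $\beta(a_1)=\beta(a_2)=1$, so $\bigl(f\circ(\beta\times\beta)\bigr)|_{A\times A}$ is the trivial cocycle. The remaining two carry the content of the proof.

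For the injectivity of $\beta_d^*$, suppose $f$ is a definable $2$-cocycle on $C$ with $f\circ(\beta\times\beta)=\delta g$ for some definable $g\colon B\to Z$. Evaluating on $A\times A$ and using $\beta(A)=\{1\}$ shows that $g|_A$ is a definable homomorphism $A\to Z$, hence trivial by definable connectedness of $A$. The cocycle relation then forces $g$ to be constant on the cosets of $A$, so $g=h\circ\beta$ for a function $h\colon C\to Z$ whose graph is definable, as $\beta$ is a definable surjection. One checks $f=\delta h$, whence $[f]=0$ in $\Ext_d(C,Z)$.

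The heart of the matter is the inclusion $\ker\iota_d^*\subseteq\im\beta_d^*$. Let $F$ be a definable $2$-cocycle on $B$ with $F|_{A\times A}=\delta\phi$ for some definable $\phi\colon A\to Z$. Extending $\phi$ to a definable $\tilde\phi\colon B\to Z$, say by $0$ off $A$, and replacing $F$ by the cohomologous cocycle $F-\delta\tilde\phi$, I may assume $F|_{A\times A}=0$. Form the definable central extension $\widehat B=B\times Z$ with product $(b_1,z_1)(b_2,z_2)=(b_1b_2,\,z_1+z_2+F(b_1,b_2))$ and projection $\pi$. The normalization $F|_{A\times A}=0$ makes $A_0:=A\times\{0\}$ a definable subgroup lifting $A$. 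The crucial step is that $A_0$ is normal in $\widehat B$: fixing $\hat b\in\widehat B$ over $b=\pi(\hat b)$, the two maps $s_1(a)=(a,0)$ and $s_2(a)=\hat b\,(b^{-1}ab,0)\,\hat b^{-1}$ are definable homomorphic sections of $\pi$ over $A$, so their difference is a definable homomorphism $A\to Z$, again trivial by connectedness; hence $s_1=s_2$, i.e. conjugation by $\hat b$ preserves $A_0$.

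Granting normality, $\widehat C:=\widehat B/A_0$ is a definable central extension of $C\cong\widehat B/(Z\cdot A_0)$ by $Z\cong Z\cdot A_0/A_0$, using that quotients by definable normal subgroups are definable in o-minimal structures. Choosing a definable section of $\widehat C\to C$ yields a definable $2$-cocycle $f$ on $C$ representing this extension, and the natural map $\hat b\mapsto(\pi(\hat b),\hat b A_0)$ identifies $\widehat B$ with the pullback of $\widehat C$ along $\beta$, so that $[F]=\beta_d^*([f])$. I expect the main obstacle to be precisely the normality of $A_0$, the one point where definable connectedness of $A$ is indispensable, in the spirit of Neeb's argument, together with the definability bookkeeping: producing the definable sections $\tilde\phi$ and $C\to\widehat C$ via o-minimal definable choice, and ensuring the quotient $\widehat B/A_0$ is genuinely definable rather than merely interpretable, so that the resulting class $f$ lands in $\Ext_d(C,Z)$.
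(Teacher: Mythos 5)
Your proposal is correct, and its skeleton is the same as the paper's (both adapt Neeb's argument): the composite vanishes trivially; injectivity of $\beta_d^*$ comes from showing the trivializing map restricts to a homomorphism on $A$, hence vanishes, and then factors through $\beta$; and for $\ker \iota_d^*\subseteq \im\beta_d^*$ one forms $\widehat B=B\times_F Z$, exhibits a normal definable copy of $A$ inside it, and passes to the quotient central extension of $C$ by $Z$. The genuine differences are local, and worth comparing. First, you normalize $F|_{A\times A}=0$ by extending $\phi$ by zero off $A$, whereas the paper keeps $F$ and instead embeds $A$ via the twisted map $\alpha(a)=(a,h_0(a))$; these are equivalent devices (your $\tilde\phi$ is definable since $A$ is a definable subset of $B$, and Lemma \ref{L:h_def} even makes the definability of $\phi$ automatic). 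Second, and more substantively, your proof of normality of $A_0$ proceeds by comparing the two definable homomorphic sections $s_1(a)=(a,0)$ and $s_2(a)=\hat b\,(b\inv ab,0)\,\hat b\inv$ and killing their difference, which only uses that a definably connected group admits no nontrivial \emph{definable} homomorphism to a finite group; the paper instead shows $\alpha(A)$ is characteristic in $\widehat A=\pi\inv(A)$, invoking the stronger Fact \ref{F:Connected} that a definably connected o-minimal group has no finite-index subgroup \emph{whatsoever}. Your weaker fact suffices because every map in your argument is definable, but note that the paper deliberately runs the proof uniformly for $\Ext_\diamond$, covering the abstract group $\Ext$ as well (which is needed later, e.g.\ in Proposition \ref{P:Torsion-free} and Theorem \ref{T:Sol}); in the abstract case the trivializing maps $h$ and $h_0$ need not be definable, so your definable-homomorphism arguments would have to be replaced by Fact \ref{F:Connected} to recover that version. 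Third, your closing identification of $\widehat B$ with the pullback of $\widehat C$ along $\beta$ is a cleaner packaging of the paper's explicit computation $f(x_1,x_2)=g(\beta(x_1),\beta(x_2))-dh(x_1,x_2)$ via the isomorphism $\gamma$; both hinge on the same two pieces of o-minimal bookkeeping you flag, namely definability of the quotient $\widehat B/A_0$ and a definable section of $\widehat C\to C$ (via \cite{mE03}, i.e.\ Lemma \ref{L:Natural_Int}), which the paper invokes at exactly the same points, so your concerns there are resolved just as in the paper.
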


This result, together with its abstract counterpart, is then used to prove Theorem \ref{T:Sol}. We also show there that $\Ext(G,Z)\cong Z^{\dim(G/N(G))}$, where $N(G)$ denotes the maximal torsion-free normal subgroup of $G$. 

In the last two sections of the paper we consider the conjecture in o-minimal expansions of real closed fields. In Section \ref{s:Def_Cohom} we study structural properties of $\Ext_d(G,Z)$. Among other results, we show that $\Ext_d(G,Z)$ is finite (Corollary \ref{C:FiniteExt}). Finally, in Section \ref{s:Reduction} we prove the o-minimal analogue of Milnor's result \cite[Lem.\,4]{jM83}. Namely, we show in Corollary \ref{C:Reduction} (cf.\,Theorem \ref{T:Reduction}) that the o-minimal isomorphism conjecture is true for a definably connected definable group $G$ in an o-minimal expansion of a real closed field, if it is true for all definably simple groups.

\section{Generalities on o-minimal finite central extensions}\label{s:Cocycle}

\subsection{Natural interpretability and cocycles}\label{s:NatInt} We briefly recall the concept of naturally interpretable extensions to complement the definition provided in the introduction. Let $G$ be an $\emptyset$-definable group in $M$ and let $Z$ be an abelian finite group which we also assume to be $\emptyset$-definable in $M$ (see Remark \ref{rmk:defZ}). Given an abstract finite central extension 
	\[
0\longrightarrow Z \stackrel{\mu}{\longrightarrow} \widehat G
\stackrel{\pi}{\longrightarrow} G \longrightarrow 1
\]
we can identify it with the three-sorted structure $\mathcal E=(Z,\mu,\widehat G,\pi,M)$ with sorts $Z,\widehat G$ and $M$ and functions for the respective group operations and homomorphisms. 
\begin{definition}\label{def:natinter}
The abstract finite central extension $\mathcal E$ is {\em naturally interpretable} in $M$ without parameters if there are an $\emptyset$-definable group $\widehat{G}_0$ in $M$ and an $\emptyset$-definable homomorphisms $\pi_0:\widehat G_0\to G$ in $M$ such that the structures $\mathcal E$ and $\mathcal E_0=(Z_0,\mu_0,\widehat G_0,\pi_0,M)$ are isomorphic over $M$, {\it i.e.} the isomorphism is the identity on $M$. 
\end{definition}

With the same notation, note that the isomorphism over $M$ between $\mathcal E$ and $\mathcal E_0$  simply means that the following diagram commutes 
\[
\begin{tikzcd}
	0 \arrow{r}  & Z \arrow{r}{\mu}\arrow{d}{\gamma_Z} & \widehat{G}      
	\arrow{r}{\pi_1}\arrow{d}{\gamma}        & G \arrow{r}\arrow[equal]{d}  & 1 \\
	0 \arrow{r} & Z_0 \arrow{r}{\mu_0}  & \widehat{G}_0  \arrow{r}{\pi_0}       & G  
	\arrow{r} & 1 
\end{tikzcd}
\]
Observe that, without loss of generality, we may assume that $Z_0=Z$ by replacing the finite central extension $\mathcal E_0$ by $(Z,\mu_0\circ\gamma_Z,\widehat G_0,\pi_0,M)$. 

\begin{remark}\label{rmk:defZ}
The diagram above implies that in $M$ one can define the group $\ker(\pi_0$), which is isomorphic to the finite group $Z$. This is the reason we impose the definability of $Z$ from the beginning. Otherwise, if $Z$ is not isomorphic to a definable group in $M$, then the abstract finite central extension $\mathcal E$ cannot be naturally interpretable in $M$. 

Nevertheless, note that there is no harm in assuming that $Z$ is $\emptyset$-definable in $M$, as we can always name its elements with constants. Moreover, if $M$ is an expansion of a real closed field, then every element of an abstract finite group can be identified with $\emptyset$-definable elements of $M$, so the finite group can be taken as $\emptyset$-definable. Thus, no parameters (or new constants) are needed whenever $G$ is $\emptyset$-definable in $M$.
\end{remark}

\begin{remark}\label{R:Parameters}
In the definition of naturally interpretable it is also reasonable to allow the group $\widehat G_0$ and the homomorphism $\pi_0$ to be definable with parameters, say over a set $B$. In this case, we say that $\mathcal E$ is naturally interpretable with parameters (or over $B$). Nonetheless, in this paper we only consider natural interpretability without parameters. In fact, when $M$ is an o-minimal expansion of a real closed field and $G$ and $Z$ are definable without parameters, by \cite[Cor.\,1.2]{EJP11}, the notions of naturally interpretable with and without parameters agree.
\end{remark}

As exposed in the introduction, it will be convenient to consider second homology groups and treat central group extensions via $2$-cocycles. 

Recall that a central group extension of $G$ by an abelian group $Z$ is given by a {\em $2$-cocycle}, which is a map  $f:G\times G\to Z$ that satisfies $f(x,1) = 0
	= f(1,x)$ and
	\[
	f(x,y) + f(xy,z) = f(x,yz) + f(y,z).
	\]
	For a $Z$-valued $2$-cocycle $f$ of $G$, we can define the group
	$G\times_f Z$, whose domain is $G\times Z$ and its operation is defined as
	follows:
	\[
	(x_1,z_1) (x_2,z_2) = ( x_1x_2, z_1 +z_2  +f(x_1,x_2)).
	\]
	This yields a central extension $0 \to Z \stackrel{\mu}{\to} G\times_f Z
	\stackrel{\pi}{\to} G \to 1$ by setting $\pi:G\times_f Z\to G$ to be the
	projection onto the first coordinate and $\mu:Z\to G\times_f Z$ the natural
	inclusion map. In fact, every central group extension 
	\[
	0\longrightarrow Z \stackrel{\mu}{\longrightarrow} \widehat G
	\stackrel{\pi}{\longrightarrow} G \longrightarrow 1
	\]
	is  equivalent to the group extension given by $G\times_{f_s} Z$, where $f_s:G\times
	G\to Z$ is a $2$-cocycle satisfying $\mu(f_s(x,y)) = s(xy)\inv s(x)s(y)$ where
	$s:G\to \widehat{G}$ is a section with $s(1) =1$. Note that the equivalence is
	given by the isomorphism $\gamma: G\times_{f_s} Z\to \widehat{G}$ defined by $(x,z) \mapsto 	s(x)\cdot \mu(z)$. Note that if the section $s:G\to \widehat{G}$ is also a homomorphism, then $f_s$ is the trivial map and moreover $s(G)$ is a normal subgroup $\widehat G$ as $Z$ is central. Hence, the groups $\widehat G$ and $G\times Z$ are isomorphic, in which case $\widehat G$ splits trivially.
	
	When the group $G$ is definable in a structure $M$, we have the following relation between naturally interpretable extensions and 	definable $2$-cocycles (cf.\,\cite[Sec.\,3.1]{BPP10}). 
	
	\begin{lemma}\label{L:Natural_Int}
		Let $G$ be a definably connected $\emptyset$-definable group in an o-minimal structure $M$ and let
		\[
		0\longrightarrow Z \stackrel{\mu}{\longrightarrow} \widehat G
		\stackrel{\pi}{\longrightarrow} G \longrightarrow 1
		\]
		be an abstract finite central extension, with $Z$ definable in $M$ without parameters. The finite central extension is naturally
		interpretable in $M$ without parameters if and only if there is an $\emptyset$-definable $2$-cocycle $f:G\times
		G\to Z$ such that $\widehat{G}$ and $G\times_f Z$ determine  equivalent extensions. 	
	Moreover, this equivalence is given by a definable isomorphism whenever the group extension given by $\widehat{G}$ is $\emptyset$-definable.
	\end{lemma}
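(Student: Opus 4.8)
The plan is to prove both implications, the backward one being routine and the forward one carrying the actual content.

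For the backward implication, suppose there is an $\emptyset$-definable $2$-cocycle $f:G\times G\to Z$ with $\widehat G$ and $G\times_f Z$ equivalent. The group $G\times_f Z$ has domain $G\times Z$ and operation $(x_1,z_1)(x_2,z_2)=(x_1x_2,z_1+z_2+f(x_1,x_2))$, and all of this is $\emptyset$-definable once $f$ and the operations of $G$ and $Z$ are. Taking $\widehat G_0:=G\times_f Z$ together with the first projection $\pi_0$ (and $\mu_0(z)=(1,z)$) yields an $\emptyset$-definable group and an $\emptyset$-definable homomorphism onto $G$. By hypothesis this extension is equivalent to $\widehat G$, and since the equivalence is the identity on the sorts $Z$ and $G$ coming from $M$, it is an isomorphism over $M$; thus $\mathcal E$ is naturally interpretable by Definition \ref{def:natinter}.

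For the forward implication, suppose $\mathcal E$ is naturally interpretable. By Definition \ref{def:natinter} (after replacing $Z_0$ by $Z$ as noted above) there is an $\emptyset$-definable group $\widehat G_0$ and an $\emptyset$-definable homomorphism $\pi_0:\widehat G_0\to G$ giving a central extension $0\to Z\xrightarrow{\mu_0}\widehat G_0\xrightarrow{\pi_0}G\to 1$ equivalent to $\mathcal E$. Since equivalence of extensions is transitive, it suffices to produce an $\emptyset$-definable $2$-cocycle $f$ with $G\times_f Z$ equivalent to $\widehat G_0$. Following the construction recalled just before the lemma, this reduces to finding an $\emptyset$-definable section $s_0:G\to\widehat G_0$ of $\pi_0$ with $s_0(1)=1$: then $f(x,y):=\mu_0\inv\bigl(s_0(xy)\inv s_0(x)s_0(y)\bigr)$ is well-defined, because the argument maps to $(xy)\inv xy=1$ under $\pi_0$ and hence lies in $\ker\pi_0=\im\mu_0$; it is a $2$-cocycle; and $(x,z)\mapsto s_0(x)\mu_0(z)$ gives the required equivalence $G\times_f Z\to\widehat G_0$.

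The crux, and the main obstacle, is obtaining the section \emph{without parameters}. Here $\pi_0$ is an $\emptyset$-definable surjection whose fibers are cosets of $\im\mu_0$, hence all of constant finite size $|Z|$, and $\widehat G_0$ is an $\emptyset$-definable subset of some $M^n$. I would exploit the linear order of the o-minimal structure: the relation selecting, for each $x\in G$, the lexicographically least element of the finite $\emptyset$-definable set $\pi_0\inv(x)\subseteq M^n$ is itself $\emptyset$-definable, so it defines an $\emptyset$-definable section $s$. This canonical order-based choice is precisely what avoids the parameters that a generic appeal to definable choice would otherwise introduce. Finally I would normalise: setting $s_0(x):=s(x)\,s(1)\inv$ keeps the map an $\emptyset$-definable section, since $\pi_0\bigl(s(x)s(1)\inv\bigr)=x$, while achieving $s_0(1)=1$; this last normalisation is what forces $f(x,1)=f(1,x)=0$, and the cocycle identity then follows directly from associativity in $\widehat G_0$.
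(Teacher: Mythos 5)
Your proof is correct, and its skeleton (reduce to an $\emptyset$-definable extension, extract an $\emptyset$-definable normalised section, pass to the associated cocycle, and note the backward direction is immediate from the definition) is exactly the paper's. The one genuine difference is how the crux step is discharged: the paper obtains the $\emptyset$-definable section by citing Edmundo's theorem on definable sections of definable extensions \cite[Thm.\,3.10]{mE03}, a result that works for general definable extensions with possibly infinite kernel, whereas you exploit the finiteness of the fibers of $\pi_0$ directly, selecting the lexicographically least element of each finite fiber $\pi_0\inv(x)\subseteq M^n$. Your route is more elementary and self-contained: parameter-free definable choice on uniformly finite definable families needs only the linear order, not the full strength of the cited theorem, and the lex-min selection is manifestly $\emptyset$-definable and uniform in $x$. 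The same principle quietly justifies a point you use implicitly, namely that $\mu_0$ (equivalently, the identification of $\ker\pi_0$ with $Z$ after the replacement $Z_0=Z$) is $\emptyset$-definable: every element of a finite $\emptyset$-definable subset of $M^n$ is $\emptyset$-definable in an ordered structure, again by its lex-order position. Your normalisation $s_0(x)=s(x)s(1)\inv$ is also fine, since $s(1)\in\ker\pi_0$ is an $\emptyset$-definable element and $\pi_0$ is a homomorphism, and the verification that $f$ is a $2$-cocycle and that $(x,z)\mapsto s_0(x)\mu_0(z)$ is an equivalence is the standard computation the paper likewise leaves to the reader. What the paper's citation buys is generality (the same reference covers extensions where the kernel is not finite, as needed elsewhere); what your argument buys is transparency and independence from that machinery in the finite-kernel case at hand.
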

	\begin{proof} 
		Suppose that the group extension  given by $\widehat G$ is naturally
		interpretable. Without loss of generality, we clearly may assume that the extension itself is
		definable in $M$ without parameters. So, by \cite[Thm.\,3.10]{mE03} we obtain an
		$\emptyset$-definable section $s:G\to \widehat{G}$ with $s(1)=1$, and let
		$f_s:G\times G\to Z$ be the associated cocyle. Note that the group  $G\times_{f_s} Z$ is
		definable without parameters as well. Hence, since as explained above the
		extensions $\widehat{G}$ and $G\times_{f_s} Z$ are equivalent, we obtain the result
		as the maps associated to the extension $G\times_{f_s} Z$ are clearly definable.

		For the other direction, notice that any extension equivalent to $G\times_f Z$
		for some $\emptyset$-definable $2$-cocycle $f$ is clearly naturally interpretable  by definition. So, there is some function $\gamma:G\times_f Z\to \widehat G$ witnessing the desired isomorphism over $M$.
	\end{proof}

\subsection{Definable second cohomology group}	
	
	Let $G$ be a group and let $Z$ be a finite abelian group. As usual, we denote by $Z^2(G,Z)$ the group of $Z$-valued $2$-cocycles of $G$, that is,
	\[
	Z^2(G,Z) = \left\{ f:G\times G\to Z  \ | \ f \text{ is a $2$-cocycle on
		$G$}\right\},
	\]
	whose operation is given by pointwise addition. So, it is clearly an 	abelian group, as so is $Z$.	
	For a map $h:G\to Z$ with $h(1)=0$ we always obtain, using the terminology from \cite[Chap.\,16]{tT14}, 
	the $2$-cocycle 
	\[
	dh(x,y)=h(xy)-h(x)-h(y).
	\]
	Cocycles of this form are called {\em $2$-coboundaries}, {\it i.e.} an
	element $f\in Z^2(G,Z)$ is a {\em $2$-coboundary} if there is some $h:G\to Z$
	such that $h(1)=0$ and $f=dh$.  
	The set of $2$-coboundaries $B^2(G,Z)$ is a subgroup of $Z^2(G,Z)$. It is
	routine to verify that two extensions $G\times_{f_1} Z$ and $G\times_{f_2} Z$
	are equivalent if and only if $f_1-f_2\in B^2(G,Z)$, see for example
	\cite[Thm.\,7.32]{jR95}. In particular, an extension $G\times_f Z$ splits trivially ({\it i.e.} is equivalent to $G\times Z$) if and only if $f$ is a $2$-coboundary. Therefore, we will be interested in the second cohomology group
	\[
	\Ext(G,Z) = Z^2(G,Z) / B^2(G,Z). 
	\]
	In particular, the class of a $2$-cocycle determines a central extension up to equivalence, and viceversa.
	
	In the light of the Lemma \ref{L:Natural_Int}, we introduce the following definable version of $\Ext$ to 
	study naturally interpretable extensions. 
	\begin{definition}
		Let $G$ be an $\emptyset$-definable group in an structure $M$ and let $Z$ be finite group, which is also $\emptyset$-definable in $M$.  We write
		$Z_d^{2}(G,Z)$ to denote the subgroup of {\em $\emptyset$-definable $2$-cocycles}, and
		$B_d^2(G,Z)$ for the subgroup of $2$-coboundaries $f\in B^2(G,Z)$ such that $
		f=dh$ for some $\emptyset$-definable function $h:G\to Z$ with $h(1)=0$. Define the {\em definable second cohomology group} as \[
		\Ext_d(G,Z) = Z_d^{2}(G,Z) / B_d^{2}(G,Z).
		\]
	\end{definition}
	We will see in Corollary \ref{C:Ext-Inclusion} that $\Ext_d(G,Z)$ is the group of central extensions of $G$ by $Z$ which are naturally interpretable, by Lemma \ref{L:Natural_Int}.	
	
	We need the following result of Berarducci, Peterzil and Pillay \cite{BPP10} which is hidden in the proof of the claim of Theorem 2.10,
	see also \cite[Prop.\,2.11]{oF17}. We will apply it without explicit mention in the subsequent results. Since it will be extremely useful
	in the sequel, we record a proof for the sake of completeness.
	
	\begin{fact}\label{F:Connected}
		Let $G$ be a definably connected group in an o-minimal structure. Then $G$ has
		no finite-index abstract proper subgroup.
	\end{fact}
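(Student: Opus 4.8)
The plan is to argue by contradiction, reducing first to a normal subgroup. Suppose $H\le G$ has finite index. Its normal core $N=\bigcap_{g\in G}gHg\inv$ is then a proper normal subgroup of finite index, say $m=[G:N]$. Since $G/N$ is a finite group of order $m$, Lagrange's theorem gives $g^m\in N$ for every $g\in G$; equivalently, the set of $m$-th powers $P_m=\{g^m:g\in G\}$ is contained in $N$. It therefore suffices to show that $P_m$ generates $G$, since this forces $N=G$, contradicting properness (and hence $H=G$).

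The key step is local surjectivity of the power map at the identity. Working in the definable $C^1$-manifold structure carried by $G$, under which $G$ is a definable topological group in the $t$-topology, the map $\phi_m:G\to G$, $\phi_m(g)=g^m$, is definable and $C^1$, and its differential at $1$ equals $m\cdot\mathrm{id}$ on the tangent space $T_1G$: indeed the differential of multiplication at $(1,1)$ is addition, so inductively $d(\phi_m)_1=m\cdot\mathrm{id}$, which is invertible. By the o-minimal inverse function theorem, $\phi_m$ is a local definable homeomorphism at $1$, so its image $P_m$ contains a definable open neighbourhood $U$ of the identity. It is important to note that \emph{global} surjectivity of $\phi_m$ may fail, so one cannot simply conclude $P_m=G$; spreading the neighbourhood $U$ across all of $G$ is exactly where definable connectedness enters.

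Finally I would invoke the standard o-minimal fact that a definably connected group is generated by any definable open neighbourhood of the identity: the increasing union $\bigcup_k U^k$ stabilises to a definable open, hence clopen, subgroup $\langle U\rangle$, and definable connectedness, in the form that $G$ has no proper clopen definable subgroup, forces $\langle U\rangle=G$. Combining the steps, $G=\langle U\rangle\subseteq\langle P_m\rangle\subseteq N\subsetneq G$, a contradiction.

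The main obstacle is the key step, namely showing that $P_m$ has non-empty interior in the general o-minimal setting, not merely over $\mathbb{R}$. This rests on endowing $G$ with its definable differentiable structure and on the validity of the inverse function theorem there. An alternative route, avoiding the explicit differential computation, is to show directly that the $m$-torsion $\{g:g^m=1\}$ is finite, so that $\phi_m$ has finite generic fibres and hence $\dim P_m=\dim G$; then $P_mP_m\inv$ contains an open neighbourhood of $1$ lying inside the subgroup $N$, and the same generation argument applies.
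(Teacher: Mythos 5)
Your reduction to the normal core $N$ and the idea of generating $G$ by $m$-th powers is exactly the skeleton of the paper's proof, but the paper fills in the key step by citing \cite[Lem.\,8.1]{HPP11}, which asserts \emph{uniform} generation in an arbitrary o-minimal structure: there is a fixed $r$ such that every element of the definably connected group $G$ is a product of $r$ many $n$-th powers, so $G=\langle \im(\sigma_n)\rangle\le N$ immediately. Your substitute for this step does not go through at the stated level of generality. First, the differentiable machinery is unavailable: the Fact is asserted for arbitrary o-minimal structures, where Pillay's theorem equips $G$ only with a definable \emph{topological} manifold structure; a definable $C^1$ structure and the o-minimal inverse function theorem require the structure to expand a real closed field, so $d(\phi_m)_1=m\cdot\mathrm{id}$ is not even meaningful in, say, an ordered vector space.

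Second, and more seriously, your final generation step is false even over real closed fields: a definably connected group need \emph{not} be generated by a definable open neighbourhood of the identity. Take $G=(R,+)$ with $R$ a nonarchimedean real closed field and $U=(-1,1)$: then $U^k=(-k,k)$ never stabilises, and $\langle U\rangle=\bigcup_k(-k,k)$ is the convex hull of $\Z$, a \emph{proper} open (hence clopen) subgroup. It is not definable, and definable connectedness only excludes proper \emph{definable} clopen subsets, so no contradiction arises; this is precisely why the bounded, uniform version of generation in \cite{HPP11} (proved by dimension/genericity arguments, not local ones) is needed, and why your argument, which is the classical proof for real Lie groups, does not transfer. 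The same gap sinks your alternative route, since it ends by invoking ``the same generation argument''; moreover that route's premise is also wrong: the $m$-torsion set $\{g:g^m=1\}$ can be infinite (in $SO(3)$ the involutions form a two-dimensional family), and finiteness of torsion would bound the fibres of $\phi_m$ only in the abelian case.
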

	\begin{proof}
		Let $H$ be a finite-index proper subgroup of $G=G^\circ$, which we clearly may
		assume that it is normal. Set $n=|G/H|$ and consider the map $\sigma_n:G\to G$
		given by $\sigma_n(x)=x^n$. By \cite[Lem.\,8.1]{HPP11} we have that $G=\langle
		\im(\sigma_n)\rangle$. On the other hand, the image $\im(\sigma_n)\subset H$, so
		$G=\langle \im( \sigma_n)\rangle \le H$, which yields the statement. 
	\end{proof}
	
	\begin{lemma}\label{L:h_def}
		Let $G$ be a definably connected group in an o-minimal structure and let $Z$ be
		an abelian group of order $n$. If $f\in B^{2}(G,Z)$ is a $2$-coboundary, then
		there is a unique map  $h:G\to Z$ with $h(1)=0$ and $f=dh$. Moreover, for every $x\in G$
		there are some $y_1,\ldots,y_r$ satisfying  $x=y_1^n\cdots y_r^n$ such that
		\[
		h(x)  = \sum_{j=1}^{r} \left( f(y_1^n\cdots y_{j-1}^n,y_j^n) + \sum_{i=1}^{n-1}
		f(y_j,y_j^i) \right).
		\]
		In particular, if $f$ is $\emptyset$-definable, then so is $h$ and hence $f\in
		B_d^2(G,Z)$. 
		
	\end{lemma}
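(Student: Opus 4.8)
The plan is to treat the statement in three parts: uniqueness of $h$, the explicit formula, and the transfer of definability from $f$ to $h$.

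For uniqueness, suppose $f=dh_1=dh_2$. Any $h$ with $f=dh$ automatically satisfies $h(1)=0$, since $dh(1,1)=-h(1)$ while the cocycle condition forces $f(1,1)=0$. Now $d(h_1-h_2)=0$ says precisely that $\chi:=h_1-h_2$ is a homomorphism $G\to Z$. As $Z$ is finite and $G$ is definably connected, Fact \ref{F:Connected} forbids proper finite-index subgroups; were $\chi$ nontrivial, $\ker\chi$ would be such a subgroup, so $\chi\equiv 0$ and $h_1=h_2$.

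For the formula, I would first telescope the identity $f(y,y^i)=h(y^{i+1})-h(y)-h(y^i)$ (which is just $dh(y,y^i)=f(y,y^i)$) to obtain
\[
h(y^k)=k\,h(y)+\sum_{i=1}^{k-1} f(y,y^i)
\]
for all $k\ge 1$. Taking $k=n$ and using $n\,h(y)=0$ (as every element of $Z$ has order dividing $|Z|=n$) gives $h(y^n)=\sum_{i=1}^{n-1} f(y,y^i)$, which is the inner sum. Next, an induction on $r$ using the cocycle identity yields the product rule
\[
h(a_1\cdots a_r)=\sum_{j=1}^{r} h(a_j)+\sum_{j=2}^{r} f(a_1\cdots a_{j-1},a_j).
\]
Specialising $a_j=y_j^n$ and substituting the expression for $h(y_j^n)$ gives the claimed formula, the missing $j=1$ term of the inflation sum being harmless since $f(1,y_1^n)=0$. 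The existence of a decomposition $x=y_1^n\cdots y_r^n$ follows from $G=\langle\im(\sigma_n)\rangle$, as in the proof of Fact \ref{F:Connected} via \cite[Lem.\,8.1]{HPP11}, together with $(y^n)\inv=(y\inv)^n$ so that only positive $n$-th powers are needed.

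For definability, which I expect to be the main obstacle, the formula alone is not first-order because the number $r$ of factors is a priori unbounded. The key is to produce a single bound $N$, independent of $x$, with $x\in\im(\sigma_n)^N$ for all $x\in G$. Since $\im(\sigma_n)$ is a definable symmetric generating set containing $1$, the sets $\im(\sigma_n)^k$ form an increasing definable chain with union $G$; by the usual dimension/genericity argument for groups definable in o-minimal structures, this chain stabilises at $G$ after finitely many steps, yielding such an $N$. With $N$ fixed, I would define $h(x)=z$ by the $\emptyset$-definable formula asserting the existence of $y_1,\dots,y_N$ with $x=y_1^n\cdots y_N^n$ and $z$ equal to the right-hand side of the formula, padding any shorter decomposition by $y_j=1$ (which contributes nothing, as $f(\cdot,1)=f(1,\cdot)=0$). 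This is well-defined precisely because uniqueness forces the right-hand side to be independent of the chosen decomposition; hence $h$ is $\emptyset$-definable whenever $f$ is, and therefore $f\in B_d^2(G,Z)$.
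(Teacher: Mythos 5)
Your uniqueness argument and your derivation of the formula coincide with the paper's proof: $h_1-h_2$ is a homomorphism, hence trivial by Fact \ref{F:Connected} since its kernel would have finite index; telescoping $dh(y,y^i)=f(y,y^i)$ gives $h(y^n)=\sum_{i=1}^{n-1}f(y,y^i)$ because $nh(y)=0$; and the product rule by induction on $r$ recovers the displayed formula, with the vacuous $j=1$ term handled as you say. These parts are correct and essentially identical to the paper.

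The gap is in your justification of the uniform bound $N$. You invoke \cite[Lem.\,8.1]{HPP11} only for the generation statement $G=\langle\im(\sigma_n)\rangle$ and then claim that the increasing chain of definable sets $\im(\sigma_n)^k$ with union $G$ stabilises ``by the usual dimension/genericity argument''. No such general principle holds: in $G=(\mathbb{R},+)$ the set $X=[-1,1]$ is a definable symmetric generating set containing the identity, yet $X^k=[-k,k]$ is a strictly increasing chain with union $\mathbb{R}$ that never stabilises; the dimensions $\dim X^k$ stabilise immediately, but the sets do not, and no $X^k$ is generic. So any stabilisation argument must exploit special properties of $\im(\sigma_n)$, and that is precisely the content of \cite[Lem.\,8.1]{HPP11} read in its full strength: for a definably connected $G$ it provides a single $r$ such that \emph{every} $x\in G$ is a product of $r$ many $n$-th powers. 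This is exactly how the paper proceeds, and your proof is repaired simply by citing the full statement rather than only its generation consequence. A minor further point: well-definedness of your defining formula is not ``forced by uniqueness''; rather, your own computation shows that the right-hand side equals $h(x)$ for the fixed coboundary witness $h$ and \emph{every} decomposition of $x$, which gives independence of the decomposition directly; uniqueness then serves only to identify the resulting $\emptyset$-definable function with $h$.
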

	\begin{proof}
		As $f$ is a $2$-coboundary, by definition there is some $h:G\to Z$ with $h(1)=0$
		such that $dh=f$. Observe that $h$ must be unique. Indeed, suppose that
		$h_0:G\to Z$ is such that $dh_0=f$ and set $\tilde h=h_0-h$. Since $dh_0=f=dh$,
		we have for every $x,y\in G$ that
		\[
		0=dh_0(x,y)-dh(x,y) = d\tilde h(x,y)=\tilde h(xy) -\tilde h(x) - \tilde h(y).
		\]
		This yields that $\tilde h:G\to Z$ is an homomorphism and by the connectedness
		of $G$ we deduce, using Fact \ref{F:Connected}, that $\tilde h=0$, so $h=h_0$.
		
		Now, let $n$ be the order of $Z$. Using the fact that $f(x,y)=h(xy)-h(x)-h(y)$
		for every $x,y\in G$, we easily see that 
		\[
		\sum_{i=1}^{n-1} f(y,y^i) = \sum_{i=1}^{n-1} \left( h(y^{i+1}) - h(y) -
		h(y^i)\right)  = h(y^n),
		\]
		since $nh(y)=0$. As $G=G^\circ$, by \cite[Lem.\,8.1]{HPP11} there is some
		natural number $r\ge 1$ such that for every element $x\in G$ there are
		$y_1,\ldots,y_r\in G$ satisfying $x=y_1^n\cdots y_r^n$. A straightforward recursive argument on $r$ shows that
		\begin{align*}
			h(x)  = h(y_1^n\cdots y_r^n ) & = \sum_{j=1}^{r} \left(f(y_1^n\cdots
			y_{j-1}^n,y_j^n) +  h(y_j^n) \right) \\ &
			= \sum_{j=1}^{r} \left( f(y_1^n\cdots y_{j-1}^n,y_j^n) + \sum_{i=1}^{n-1}
			f(y_j,y_j^i) \right)  ,
		\end{align*}
		which yields that the graph of $h$ is definable.
	\end{proof}

	\begin{cor}\label{C:Ext-Inclusion} Let $G$ be a definably connected $\emptyset$-definable group in an
		o-minimal structure and let $Z$ be a finite abelian group, which we assume to be $\emptyset$-definable. We have that
		$B_d^2(G,Z) = Z_d^2(G,Z) \cap B^2(G,Z)$ and the map $\Ext_d(G,Z)\hookrightarrow
		\Ext(G,Z)$ given by $[f] \to [f]$ is an injective homomorphism. \qed
	\end{cor}

An immediate translation of the corollary above extends \cite[Fact\,2.7]{HPP11} to arbitrary o-minimal groups, nonetheless it is not used in the sequel. 
	
	\begin{cor}
		Let $G$ be a definably connected definable group in an o-minimal structure and let $Z$ be
		a definable finite abelian group. If a definable group extension 
		of $Z$ by $G$ splits trivially as an abstract extension, then it is 		splits trivially and definably.
	\end{cor}

	\begin{proof}
	We may assume that both $G$ and $Z$ are definable without parameters and also that the extension is given by a $2$-cocycle $f\in Z_d^2(G,Z)$, by Lemma \ref{L:Natural_Int}. Note that $f\in B^2(G,Z)$ since the extension splits trivially, as an abstract extension. So, by Lemma \ref{L:h_def}, we have that $f=dh$ for a definable map $h:G\to Z$ with
		$h(1)=0$. Hence, the homomorphism $\alpha:G\to G\times_f
		Z$ defined by $\alpha(g) = (g,h(g))$ is definable. It then follows that
		$G\times_f Z$ can be written as a direct product of $\im(\alpha)$ and the subgroup
		$\{1\}\times Z$, which are both definable. 
	\end{proof}

	From now on we write $\Ext_\diamond$ to refer
	indistinctly to both groups  $\Ext$ and $\Ext_d$.
	Likewise, we use the terminology $Z_\diamond^2$ and  $B_\diamond^2$.

\subsection{A definable inflation-retraction}
We now prove the following result on exact sequences for o-minimal groups. The proof is an adaptation of \cite[Thm.\,1.5]{kN02} (cf.\,\cite[p.\,354]{sL63} and \cite[Thm.\,51.3]{lF70}), where the statement holds for a suitable subgroup of $\Ext(B,Z)$ under the assumption that $A$ is a central subgroup of $B$. It is worth noticing that in the statement below we need only impose that $A$ has no finite-index proper subgroup, which permits to adapt Neeb's proof.

\begin{theorem}\label{T:Ext-Sequence}
Let $A$ be a definably connected group  and let
		\[
		1\longrightarrow A \stackrel{\iota}{\longrightarrow} B
		\stackrel{\beta}{\longrightarrow} C \longrightarrow 1
		\]
		be an $\emptyset$-definable  short exact sequence in an o-minimal structure $M$, where $\iota$ is the inclusion map. Let $Z$ be a finite abelian group and suppose it is $\emptyset$-definable (after naming its elements). Then
		\[
		0\longrightarrow \Ext_\diamond(C,Z) \stackrel{\beta_\diamond^*}{\longrightarrow} \Ext_\diamond(B,Z)
		\stackrel{\iota_\diamond^*}{\longrightarrow} \Ext_\diamond(A,Z)
		\]
		is exact, where $\iota_\diamond^*([f]) = [f|_{A\times A}]$  and $\beta_\diamond^*([g]) = [g\circ
		(\beta\times \beta)]$ are the restriction and  the inflation maps, respectively.
	\end{theorem}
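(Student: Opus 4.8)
The plan is to prove the three required facts — that $\beta_\diamond^*$ is injective, that $\iota_\diamond^*\circ\beta_\diamond^*=0$, and that $\ker(\iota_\diamond^*)\subseteq\im(\beta_\diamond^*)$ — treating the abstract case and the definable case ($\diamond=d$) at once, only keeping an eye on definability in the latter. Throughout, $Z$ is a trivial module, so we are dealing with central extensions. The composite vanishing is immediate: for $[f]\in\Ext_\diamond(C,Z)$ and $a,a'\in A=\ker\beta$ one has $f(\beta(a),\beta(a'))=f(1,1)=0$, so $(f\circ(\beta\times\beta))|_{A\times A}=0$ and $\iota_\diamond^*\beta_\diamond^*([f])=[0]$.

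For injectivity of $\beta_\diamond^*$, suppose $f\circ(\beta\times\beta)=dh$ for some $h:B\to Z$ with $h(1)=0$ (which is definable when $\diamond=d$, by the very definition of $B_d^2$). Restricting to $A\times A$ gives $dh|_{A\times A}=0$, so $h|_A$ is a homomorphism into the finite group $Z$; its kernel has finite index, whence $h|_A\equiv 0$ by Fact \ref{F:Connected} and the definable connectedness of $A$. Evaluating $dh(b,a)=f(\beta(b),1)=0$ for $a\in A$ then yields $h(ba)=h(b)$, i.e.\ $h$ is constant on the fibres $bA$ of $\beta$. Hence $h$ factors as $h=g\circ\beta$ for a map $g:C\to Z$ with $g(1)=0$, which is definable through its definable graph $\{(c,z):\exists b\,(\beta(b)=c\wedge h(b)=z)\}$; surjectivity of $\beta$ then forces $f=dg$, so $[f]=0$.

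The substantive direction is $\ker(\iota_\diamond^*)\subseteq\im(\beta_\diamond^*)$, which I would obtain by adapting Neeb's argument. Fix $[F]\in\Ext_\diamond(B,Z)$ with $F|_{A\times A}=d\phi$, $\phi:A\to Z$, $\phi(1)=0$. Using a definable section $s:C\to B$ of $\beta$ with $s(1)=1$ (Edmundo \cite[Thm.\,3.10]{mE03}), write each $b\in B$ uniquely as $b=s(\beta(b))\,\alpha(b)$ with $\alpha(b)\in A$, and set $\Phi(b)=\phi(\alpha(b))$; this is a definable extension of $\phi$ to $B$ with $\Phi(1)=0$. Replacing $F$ by the cohomologous cocycle $F-d\Phi$, I may assume $F|_{A\times A}=0$. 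The key step is then to show that, for each fixed $b$, the map $a\mapsto F(b,a)+F(ba,b\inv)-F(b,b\inv)$ is a homomorphism $A\to Z$ — this is a direct cocycle computation identifying it with the $Z$-component of the conjugate of $(a,0)$ by a lift of $b$ inside $B\times_F Z$ — so by Fact \ref{F:Connected} it vanishes identically. Concretely this says that the copy $A\times\{0\}$ of $A$ is normal in $B\times_F Z$, and the quotient $(B\times_F Z)/(A\times\{0\})$ is a central extension of $C=B/A$ by $Z$. Taking its $2$-cocycle relative to the section induced by $s$, namely $f(c_1,c_2)=F(s(c_1),s(c_2))-F\bigl(s(c_1c_2),\,s(c_1c_2)\inv s(c_1)s(c_2)\bigr)$, which is visibly definable, one checks that $B\times_F Z$ is isomorphic over $B$ and $Z$ to the pullback along $\beta$ of this quotient extension; equivalently, that $F$ and $f\circ(\beta\times\beta)$ differ by an explicit coboundary $dh$. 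Reading $h$ off this comparison of sections gives $[F]=\beta_\diamond^*([f])$.

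I expect the main obstacle to be twofold, both localized in this last direction. First, the descent of $F$ to a cocycle on $C$ is possible only because the homomorphism $a\mapsto F(b,a)+F(ba,b\inv)-F(b,b\inv)$ vanishes, and this is exactly where the definable connectedness of $A$ — that is, the absence of proper finite-index subgroups — is indispensable; without it one would, as in the classical Hochschild–Serre sequence, only land in the $C$-invariants of $\Ext_\diamond(A,Z)$. Second, in the definable case one must produce the coboundary $h$ with $F-f\circ(\beta\times\beta)=dh$ \emph{explicitly} and definably: since $B$ need not be definably connected, Lemma \ref{L:h_def} is unavailable, so I would extract $h$ directly from the definable isomorphism of extensions rather than relying on automatic definability.
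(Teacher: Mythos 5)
Your proposal is correct and takes essentially the same route as the paper: the injectivity argument is identical, and for the hard inclusion $\ker(\iota_\diamond^*)\subseteq\im(\beta_\diamond^*)$ you reproduce Neeb's construction just as the paper does — embed $A$ as a normal subgroup of $B\times_F Z$ (normality forced by Fact \ref{F:Connected}, since the relevant homomorphism $a\mapsto F(b,a)+F(ba,b^{-1})-F(b,b^{-1})$ into the finite group $Z$ must vanish), pass to the quotient central extension of $C$ by $Z$, and recover $F$ as an inflation up to a coboundary $dh$ with $h$ read off definably from a comparison of sections. Your variations are cosmetic — you pre-normalize $F|_{A\times A}=0$ via a definable section of $\beta$ and write the descended cocycle explicitly, whereas the paper embeds $a\mapsto(a,h_0(a))$ with $h_0$ definable by Lemma \ref{L:h_def} and invokes Lemma \ref{L:Natural_Int} on the quotient extension — and your closing observation that $h$ must be produced definably because $B$ need not be definably connected is exactly the point the paper's final paragraph addresses.
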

The abstract version of the statement for arbitrary groups is called the higher inflation-restriction exact sequence \cite[p.\,129]{HS53}. Thus, it only remains to prove Theorem \ref{T:Ext-Sequence} for $H_d^2$. Nonetheless, our proof is uniform for both the abstract and the definable case. 
	\begin{proof}
		It is routine to verify that the functions $\iota_\diamond^*$ and
		$\beta_\diamond^*$ are  well-defined. We prove that the sequence is exact. 
		
		We prove that $\beta_\diamond^*$ is injective. Let $g\in Z^2_\diamond(C,Z)$ be a
		$2$-cocycle for which there is some (definable) map $h:B\to Z$ such that
		$g(\beta(x),\beta(y))=h(xy)-h(x)-h(y)$ for every $x,y\in B$
		and with $h(1)=0$. In particular, for $x,y\in A$ it holds that
		\[
		0=g(1,1)=g(\beta(x),\beta(y)) = h(xy) - h(x)-h(y),
		\]
		so $h|_A:A\to Z$ is a group homomorphism. Moreover, as $Z$ is finite, so is the
		index of $\ker(h|_A)$ in $A$ and hence $h|_A$ is the trivial map, since $A$ has
		no finite-index proper subgroup. As a consequence, for every $x\in A$ and $y\in
		B$ arbitrary we have
		\[
		0=g(1,\beta(y))=g(\beta(x),\beta(y)) = h(xy) -h(y).
		\]
	In particular, the function $\tilde{h}:C\rightarrow Z$ defined by $\tilde{h}(c)=h(x)$ for some $x\in \beta\inv(c)$ is well-defined. We then clearly have that $g=d\tilde{h}$, so that $[g]=[0]$, as desired.

		Now, we prove that $\ker(\iota_\diamond^*)= \im(\beta_\diamond^*)$. Observe that
		$(\iota_\diamond^*\circ \beta_\diamond^*)([f])=[0]$ for every $g\in
		Z_\diamond^2(C,Z)$, so $\im(\beta_\diamond^*) \subset \ker(\iota_\diamond^*)$.
		Hence, it remains to prove $\ker(\iota_\diamond^*)\subset
		\im(\beta_\diamond^*)$.
		
		Let $f\in Z_\diamond^2(B,Z)$ and suppose that $\iota_\diamond^*([f])=[0]$, {\it
			i.e.}, there is some function $h_0:A\to Z$ such that $h_0(1)=0$ and
		$f(x,y)=dh_0(x,y)=h_0(xy)-h_0(x)-h_0(y)$ for every $x,y\in A$. Set
		$\widehat{B}=B\times_f Z$ and let 
		\[
		0\longrightarrow Z \stackrel{\mu}{\longrightarrow} \widehat{B}
		\stackrel{\pi}{\longrightarrow} B\longrightarrow 1
		\]
		be the group extension where $\mu$ and $\pi$ are the natural maps.
		
		We first construct a group extension of $C$ by $Z$. Consider the normal subgroup
		$\widehat A=A\times_f Z$ of $\widehat B$. Note that there is a group
		monomorphism $\alpha:A\hookrightarrow \widehat A$ via the map $a\mapsto
		(a,h_0(a))$, since $f(x,y)=h_0(xy)-h_0(x)- h_0(y)$ for every $x,y\in A$, and moreover
		the image subgroup $\alpha(A)$ has finite index in $\widehat{A}$. Now, since $A$
		has no finite-index proper subgroup, neither does $\alpha(A)$. Also, it is a
		finite-index subgroup of $\widehat A$, so we obtain that $\alpha(A)$ is a
		characteristic subgroup of $\widehat A$. Therefore, as $\widehat A$ is normal in
		$\widehat{B}$, we deduce that $\alpha(A)$ is a normal subgroup of $\widehat{B}$.
		Thus, we can consider the quotient group $\widehat C=\widehat{B}/\alpha(A)$. 
		
		We now define the group homomorphism $\pi_0: \widehat{C} \to C$ by
		$[(x,z)] \mapsto \beta(x)$ and note that 
		\[
		0\longrightarrow Z \stackrel{\mu_0}{\longrightarrow} \widehat{C}
		\stackrel{\pi_0}{\longrightarrow} C\longrightarrow 1
		\]
		is a finite central extension, where $\mu_0(z) = [(1,z)]$. Thus, there exists
		some $2$-cocycle $g:C\times C\to Z$ and some isomorphism $\gamma:\widehat C\to C\times_g Z$ such that the following diagram commutes
		\[
		\begin{tikzcd}
			0 \arrow{r}  & Z \arrow{r}{\mu_0}\arrow[equal]{d} & \widehat{C}      
			\arrow{r}{\pi_0}\arrow{d}{\gamma}        & C \arrow{r}\arrow[equal]{d}  & 1 \\
			0 \arrow{r} & Z \arrow{r}{\mu_1}  & C\times_{g} Z  \arrow{r}{\pi_1}       & C  
			\arrow{r} & 1 
		\end{tikzcd},
		\]
		where $\mu_1(z)=(1,z)$ and $\pi_1(x,z)=x$. 	
		Since $\pi_1\circ \gamma =\pi_0$ and $\gamma\circ \mu_0=\mu_1$, 
		we obtain $\gamma([(x,z)]) = (\beta(x),z+z_x)$ for some $z_x\in Z$ which depends on
		$x$, with $z_1=0$. So, we can naturally define a map $h:B\to Z$ as $h(x)=z_x$. Now, an easy
		computation yields, on one hand that
		\begin{align*}
		\gamma\left( [(x_1,z_1)][(x_2,z_2)] \right) 
		&= \gamma\left( [(x_1x_2,z_1 + z_2 + f(x_1,x_2))] \right)  \\
	&	= \left( \beta(x_1x_2) , z_1 + z_2 + f(x_1,x_2) + h(x_1x_2) \right),
		\end{align*}
		and on the other
		\begin{align*}
			\gamma\left( [(x_1,z_1)]\right)\gamma\left([(x_2,z_2)] \right) & =  \left( \beta(x_1) , z_1 + h(x_1) \right) 
			\left( \beta(x_2) , z_2 + h(x_2) \right) \\ 
			&  = \left( \beta(x_1x_2) , z_1 + z_2 + g(\beta(x_1),\beta(x_2)) + h(x_1) +
			h(x_2) \right).
		\end{align*}
		This yields that $f(x_1,x_2) = g(\beta(x_1),\beta(x_2)) - dh(x_1,x_2)$.
		Therefore, we have proven that  $\beta^*([g]) = [f]$, as desired. 
		
		Finally, observe that when $f$ is $\emptyset$-definable, then so are $\widehat{B}$ and the
		map $h_0:A\to Z$ by Lemma \ref{L:h_def}. Hence, the group $\alpha(A)$ and so
		$\widehat{C}$, as well as the above group extension
		\[
		0\longrightarrow Z \stackrel{\mu_0}{\longrightarrow} \widehat{C}
		\stackrel{\pi_0}{\longrightarrow} C\longrightarrow 1
		\]
		are all $\emptyset$-definable in the o-minimal structure $M$. In particular, by Lemma
		\ref{L:Natural_Int} we can take the $2$-cocycle $g\in Z^2(C,Z)$  to be 
		$\emptyset$-definable, so $\beta_d^*([g]) = [f]$. This finishes the proof.
	\end{proof}

	Altogether, we obtain the following corollary for definable group extensions in
	o-minimal structures, by Corollary \ref{C:Ext-Inclusion}
	and Theorem \ref{T:Ext-Sequence}.
	
	\begin{cor}\label{C:Key_Diagram}
		Let $1\to A \stackrel{\iota}{\to} B \stackrel{\beta}{\to} C \to 1$
		be a short exact sequence of groups which is $\emptyset$-definable in an o-minimal
		structure, where $\iota$ is the inclusion map. Assume further that $A$ is
		definably connected and let $Z$ be a finite abelian group which is $\emptyset$-definable. Then the following diagram commutes
		\[
		\begin{tikzcd}
			0 \arrow{r}  & \Ext(C,Z) \arrow{r}{\beta^*} & \Ext(B,Z)      \arrow{r}{\iota^*}
			& \Ext(A,Z) \\
			0 \arrow{r} & \Ext_d(C,Z) \arrow{r}{\beta_d^*}\arrow[hookrightarrow]{u} &
			\Ext_d(B,Z)      \arrow{r}{\iota_d^*}\arrow[hookrightarrow]{u}  & \Ext_d(A,Z)   
			\arrow[hookrightarrow]{u} 
		\end{tikzcd}
		\]
		where $\iota_\diamond^*([f]) = [f|_{A\times A}]$ and $\beta_\diamond^*([g]) =
		[g\circ (\beta\times \beta)]$, and both rows are exact. \qed
	\end{cor}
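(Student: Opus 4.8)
The plan is to assemble the corollary from the two results it cites, handling exactness of the rows, injectivity of the vertical maps, and commutativity of the squares in turn. The unifying observation is that the horizontal maps in both rows are given by the \emph{same} formulas on cocycle representatives, namely $\iota_\diamond^*([f]) = [f|_{A\times A}]$ and $\beta_\diamond^*([f]) = [f\circ(\beta\times\beta)]$, and that the vertical maps are the natural inclusions $[f]\mapsto[f]$. This is what makes the diagram essentially formal once the inputs are in place.

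First I would establish exactness of the two rows. Since Theorem \ref{T:Ext-Sequence} is stated uniformly for $\diamond\in\{\,\cdot\,,d\}$, I would simply invoke it twice. The hypotheses are met: the sequence $1\to A\to B\to C\to 1$ is $\emptyset$-definable, $Z$ is a finite $\emptyset$-definable abelian group, and $A$ is definably connected, hence by Fact \ref{F:Connected} has no finite-index proper subgroup, which is the property actually used in that proof. Taking $\diamond$ to be the abstract $\Ext$ yields exactness of the top row, and taking $\diamond=d$ yields exactness of the bottom row.

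Next I would treat the vertical arrows. By Corollary \ref{C:Ext-Inclusion}, applied in turn to $A$, $B$ and $C$, the natural map $\Ext_d(\,\cdot\,,Z)\hookrightarrow\Ext(\,\cdot\,,Z)$ defined by $[f]\mapsto[f]$ is an injective homomorphism; this is precisely where the identity $B_d^2=Z_d^2\cap B^2$ is needed, to guarantee that the class map is well-defined and injective. This justifies the three hooked arrows in the diagram.

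Finally, commutativity of the two squares is immediate. Given a definable class $[f]\in\Ext_d(C,Z)$, its image under $\beta_d^*$ followed by the inclusion is $[f\circ(\beta\times\beta)]\in\Ext(B,Z)$, which is visibly the same class obtained by first including $[f]$ into $\Ext(C,Z)$ and then applying $\beta^*$; the right square is handled identically via restriction to $A\times A$. The closest thing to a subtlety—rather than a genuine obstacle—is recording that inflation of a definable cocycle along the definable map $\beta$ is again definable, and that restriction to $A\times A$ preserves definability, so that the bottom-row maps indeed land in the definable cohomology groups; both are routine. The substance of the corollary lies entirely in Theorem \ref{T:Ext-Sequence} and Corollary \ref{C:Ext-Inclusion}, so the work here is one of assembly rather than of overcoming a difficulty.
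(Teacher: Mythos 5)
Your proof is correct and takes exactly the route the paper intends: the corollary carries a \qed{} in the paper precisely because it is assembled, as you do, from Theorem \ref{T:Ext-Sequence} (invoked once with $\diamond$ the abstract $\Ext$ and once with $\diamond=d$) together with Corollary \ref{C:Ext-Inclusion} for the vertical inclusions, commutativity being formal since the horizontal maps are given by the same formulas on cocycle representatives. The one caveat---shared with the paper itself, whose statement only assumes $A$ definably connected---is that Corollary \ref{C:Ext-Inclusion} is stated for definably connected groups, so citing it for $B$ and $C$ tacitly uses their connectedness, which does hold in all of the paper's applications of this diagram.
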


We finish this section with another adaptation of the inflation-retraction exact sequence to the o-minimal setting, see \cite[6.8.3]{cW94} or \cite[p.\,354]{sL63}.

\begin{lemma}\label{L:Inf-Ret} Let $1\to A \stackrel{\iota}{\to} B \stackrel{\beta}{\to} C \to 1$
	be a short exact sequence of groups which is $\emptyset$-definable in an o-minimal
	structure, where $\iota$ is the inclusion map and $A$ is finite. Assume that $B$ and $C$ are definably connected. Let $\omega\in Z_d^2(C,A)$ be a $2$-cocycle corresponding to the given short sequence.	Then for each $\emptyset$-definable finite abelian group $Z$ we have an exact sequence
	\[
	0\longrightarrow \mathrm{Hom}(A,Z)\stackrel{\omega_*}{\longrightarrow} \Ext_{\diamond}(C,Z)\stackrel{\beta_\diamond^*}{\longrightarrow} \Ext_\diamond(B,Z)
	\]	
	where $\omega_*$ denotes the connecting homomorphism $\omega_*(\alpha) = [\alpha\circ \omega]$. 
\end{lemma}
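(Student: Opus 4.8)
The plan is to work throughout with the explicit cocycle picture, identifying $B$ with $C\times_\omega A$ via a section $s\colon C\to C\times_\omega A$, $c\mapsto (c,0)$, with $s(1)=1$, so that $\iota(a)=(1,a)$ and every $b\in B$ is written uniquely as $(c,a)$ with $c=\beta(b)$; the paper's convention gives $s(c_1)s(c_2)=s(c_1c_2)\,\iota(\omega(c_1,c_2))$. First I would record two preliminary observations that make the statement sensible and the argument uniform in $\diamond$. Since $A$ is finite and $B$ is definably connected, the conjugation homomorphism $B\to\operatorname{Aut}(A)$ has finite-index kernel (the centralizer of the finite definable set $A$), hence by Fact~\ref{F:Connected} it is trivial; thus $A$ is central and abelian, which is exactly what makes $\omega\in Z_d^2(C,A)$ meaningful. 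Moreover, a finite $\emptyset$-definable set in an o-minimal structure is pointwise $\emptyset$-definable (its elements are separated by the underlying order), so every $\alpha\in\mathrm{Hom}(A,Z)$ is $\emptyset$-definable; consequently $\alpha\circ\omega\in Z_d^2(C,Z)$ and $\omega_*$ is a well-defined homomorphism into $\Ext_d(C,Z)$, and a fortiori into $\Ext(C,Z)$.

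For injectivity of $\omega_*$, suppose $\alpha\circ\omega=dh$ for some $h\colon C\to Z$ with $h(1)=0$. I would define $\phi\colon B\to Z$ by $\phi(c,a)=\alpha(a)-h(c)$ and verify, using precisely the identity $\alpha(\omega(c_1,c_2))=h(c_1c_2)-h(c_1)-h(c_2)$, that $\phi$ is a group homomorphism satisfying $\phi\circ\iota=\alpha$. Because $Z$ is finite and $B$ is definably connected, Fact~\ref{F:Connected} forces $\phi=0$, whence $\alpha=\phi\circ\iota=0$. This is the same mechanism (connectedness kills homomorphisms to finite groups) that replaces the cohomological-triviality hypotheses in the classical inflation--restriction sequence.

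For exactness at $\Ext_\diamond(C,Z)$, the inclusion $\im\omega_*\subseteq\ker\beta_\diamond^*$ follows by exhibiting $\omega\circ(\beta\times\beta)$ as a coboundary on $B$: writing $t\colon B\to A$ for the $A$-coordinate map $t(c,a)=a$, the defining multiplication of $C\times_\omega A$ yields $\omega(\beta b_1,\beta b_2)=t(b_1b_2)-t(b_1)-t(b_2)$, so $\alpha\circ\omega\circ(\beta\times\beta)=d(\alpha\circ t)$ is a definable coboundary. The reverse inclusion is the main point. Given $[f]$ with $f\circ(\beta\times\beta)=dh'$ for some $h'\colon B\to Z$ with $h'(1)=0$, restricting the coboundary relation to $A$ shows that $\alpha:=h'\circ\iota$ is a homomorphism $A\to Z$. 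Setting $k(c)=h'(s(c))$ and evaluating $h'$ on $s(c_1)s(c_2)=s(c_1c_2)\,\iota(\omega(c_1,c_2))$ in the two possible ways (via $dh'$ applied to the factors, and via the central factor $\iota(\omega)$) gives $f=\alpha\circ\omega+dk$, that is $[f]=\omega_*(\alpha)$. Definability comes for free: when $f$ is $\emptyset$-definable, so is $f\circ(\beta\times\beta)$, and since $B$ is definably connected Lemma~\ref{L:h_def} makes $h'$, hence $\alpha$ and $k$, $\emptyset$-definable; likewise in the $C$ variable the residual coboundary $f-\alpha\circ\omega$ has a definable primitive.

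I expect the main obstacle to be this last inclusion $\ker\beta_\diamond^*\subseteq\im\omega_*$: one must extract the correct homomorphism $\alpha$ from $h'$ and then show that the residual $2$-cocycle $f-\alpha\circ\omega$ is a coboundary on $C$, which requires careful bookkeeping of $h'$ against the chosen section together with the cocycle identity for $\omega$. The conceptual key throughout is that definable connectedness of $B$ (via Fact~\ref{F:Connected}) both guarantees the centrality of $A$ and forces the auxiliary homomorphism $\phi$ to vanish, so that the single argument covers the abstract and the definable cases simultaneously.
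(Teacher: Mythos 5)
Your proposal is correct and takes essentially the same route as the paper's proof: after identifying $B$ with $C\times_\omega A$, you establish injectivity via the homomorphism $(c,a)\mapsto\alpha(a)-h(c)$ annihilated by Fact~\ref{F:Connected}, obtain $\operatorname{im}(\omega_*)\subseteq\ker(\beta_\diamond^*)$ from the coboundary $(c,a)\mapsto\alpha(a)$, and for the reverse inclusion extract $\alpha(a)=h'(1,a)$ and $k(c)=h'(c,0)$ to get $f=\alpha\circ\omega+dk$, exactly as in the paper. Your preliminary observations (centrality of $A$ via definable connectedness of $B$, pointwise $\emptyset$-definability of finite definable sets, and invoking Lemma~\ref{L:h_def} for the definability of $h'$) merely make explicit what the paper leaves implicit.
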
	
\begin{proof}
Assume, as we may, that $B=C\times_\omega A$ and $\beta:B\to C$ is the projection on the first coordinate.	Observe first that $\alpha\circ \omega$ is a definable $2$-cocycle in $Z^2_d(C,Z)$, since $A$ and $Z$ are finite and so $\alpha:A\to Z$ is $\emptyset$-definable. Thus, the map $\omega_*$ is well-defined. Moreover, $\omega_*$ is injective. For, suppose that $\omega_*(\alpha)=[0]$, so there is some definable $h':C\to Z$ with $h'(1)=0$ such that $\alpha\circ \omega=dh'$. Define the map $B\to Z$ by $(c,a)\mapsto \alpha(a) - h'(c)$ and note that it is an homomorphism. So, it must be trivial and hence $\alpha(a)=0$ for every $a\in A$, as desired.	


We prove the exactness of the sequence at $\Ext_\diamond(C,Z)$. To see $\im(\omega_*)\subset \ker(\beta_\diamond^*)$, note that $\alpha\circ\omega \circ (\beta \times \beta)=dh_0$ for a definable $h_0:B\to Z$ given by $h_0(c,a)=\alpha(a)$, so $\omega_*(\alpha)\in\ker(\beta_\diamond^*)$. 

On the other hand,  if $g\circ(\beta\times\beta)=dh$ for $g\in Z_\diamond^2(C,Z)$ and some (definable) $h:B\to Z$ with $h(1,0)=0$, then set $\alpha(a)=h(1,a)$ and note that
\[
0=g(1,1)=dh((1,a_1),(1,a_2)) = \alpha(a_1+a_2)-\alpha(a_1)-\alpha(a_2).
\]
So, we have $\alpha\in\mathrm{Hom}(A,Z)$. It then follows for $c\in C$ and $a\in A$ that
\[
0=g(c,1)=dh((c,0),(1,a))=h(c,a)-h(c,0)-h(1,a),
\]
which implies that $h(c,a)=h(1,a)+h(c,0)=\alpha(a)+h(c,0)$. Thus, if we define $h_1:C\rightarrow Z$ by $c \mapsto h(c,0)$, then for $c_1,c_2\in C$ we obtain 
$$
\begin{array}{rcl}
	
g(c_1,c_2) &= & dh((c_1,0),(c_2,0)) = h((c_1c_2,\omega(c_1,c_2))-h(c_1,0)-h(c_2,0) \\
&= & \alpha(\omega(c_1,c_2))+h(c_1c_2,0)-h(c_1,0)-h(c_2,0) \\
& = & \alpha(\omega(c_1,c_2))+dh_1(c_1,c_2),
\end{array}$$
yielding that $\omega_*(\alpha)=[g]$ and hence $ \ker(\beta_\diamond^*)\subset \im(\omega_*)$.\end{proof}
	
\section{Solvable o-minimal groups}
	
	In this section we prove our main theorem, which is deduced from the
	general structure of solvable o-minimal groups via Corollary
	\ref{C:Key_Diagram}. As a first step towards the proof, we need  the following
	result on the second cohomology group of torsion-free groups. Recall that
	every torsion-free group defined in an o-minimal structure is solvable, by
	\cite[Clm.\,2.11]{PS05}. Furthermore, Strzebonski proved in
	\cite[Lem.\,2.5]{aS94} that these groups are precisely those definable groups
	with Euler characteristic $\pm 1$ and hence they are uniquely divisible, by
	\cite[Prop.\,4.1]{aS94}.
	
	\begin{prop}\label{P:Torsion-free}
		Let $G$ be a torsion-free $\emptyset$-definable group in an o-minimal structure and let $Z$
		be an $\emptyset$-definable finite abelian group. Then $\Ext(G,Z)=0$.
	\end{prop}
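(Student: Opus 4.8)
The plan is to induct on the derived length of $G$, using the exact sequence of Theorem \ref{T:Ext-Sequence} (equivalently Corollary \ref{C:Key_Diagram}) to peel off the commutator subgroup, with the abelian case as the base. Throughout I would use that a torsion-free group definable in an o-minimal structure is solvable \cite[Clm.\,2.11]{PS05} and uniquely divisible \cite[Prop.\,4.1]{aS94}; in particular it is divisible, so it has no proper subgroup of finite index (a finite quotient of a divisible group is trivial) and is therefore definably connected. Moreover, since torsion-freeness is equivalent to having Euler characteristic $\pm 1$ \cite[Lem.\,2.5]{aS94} and $\chi$ is multiplicative across definable short exact sequences, every definable subgroup and every definable quotient of $G$ is again torsion-free. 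Hence both the definable (and definably connected) commutator subgroup $[G,G]$ and the abelianization $G/[G,G]$ are torsion-free.

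For the base case assume $G$ is abelian, so it is a uniquely divisible abelian group, and set $n=|Z|$. First I would record that $\mathrm{Hom}(G,Z)=0$: the image of any homomorphism $G\to Z$ is a divisible subgroup of the finite group $Z$, hence trivial. Now fix a $2$-cocycle $f$ and form the central extension $\widehat G=G\times_f Z$. Since $Z$ is central and $G$ is abelian, each commutator $[\widehat a,\widehat b]$ lies in $Z$ and depends only on the images $a,b\in G$, defining a biadditive map $c\colon G\times G\to Z$; as each $c(a,-)\in\mathrm{Hom}(G,Z)=0$, all commutators vanish and $\widehat G$ is abelian. Next, because $nz=0$ for every $z\in Z$ and $\mu(Z)$ is central, the $n$-th power $\widehat g^{\,n}$ of a lift of $g$ is independent of the chosen lift; this yields a well-defined map $\Phi\colon G\to\widehat G$, $\Phi(g)=\widehat g^{\,n}$, which is a homomorphism because $\widehat G$ is abelian and satisfies $\pi(\Phi(g))=g^n$. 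Composing with the inverse of the bijective $n$-th power map of $G$, the assignment $s(g)=\Phi(g^{1/n})$ is a homomorphic section of $\pi$ with $s(1)=1$. Thus the extension splits, so $[f]=[0]$ and $\Ext(G,Z)=0$.

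For the inductive step, let $G$ have derived length $k\ge 2$, set $A=[G,G]$ (definable, definably connected, torsion-free, of derived length $k-1$) and $C=G/A$ (abelian and torsion-free). Applying Theorem \ref{T:Ext-Sequence} to $1\to A\to G\to C\to 1$ gives exactness of $0\to\Ext(C,Z)\to\Ext(G,Z)\to\Ext(A,Z)$. By the base case $\Ext(C,Z)=0$ and by the induction hypothesis $\Ext(A,Z)=0$, so exactness forces $\Ext(G,Z)=0$. The main obstacle is the abelian base case, and precisely the two uses of divisibility there: the vanishing of $\mathrm{Hom}(G,Z)$, which forces the a priori only central extension $\widehat G$ to be abelian, and the existence of unique $n$-th roots, which produces the homomorphic section. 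A secondary point needing care is verifying that the subquotients arising in the induction stay torsion-free and definably connected so that Theorem \ref{T:Ext-Sequence} applies; this is where the Euler-characteristic characterisation of torsion-freeness and the absence of finite-index subgroups in divisible groups enter.
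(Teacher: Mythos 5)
Your proposal is correct and follows essentially the same route as the paper: the abelian base case is settled by building the homomorphic section $g\mapsto\Phi(g^{1/n})$ (which is exactly the paper's map $\psi$), and the inductive step peels off the definable, definably connected commutator subgroup (cf.\ \cite[Cor.\,6.8]{BJO12}, a citation you should add) via the exact sequence of Theorem \ref{T:Ext-Sequence}. The only cosmetic differences are that you induct on derived length rather than on dimension, prove abelianness of the extension directly through the commutator pairing and $\mathrm{Hom}(G,Z)=0$ instead of citing \cite[Lem.\,2.8]{BPP10}, and justify torsion-freeness of $G/[G,G]$ by multiplicativity of the Euler characteristic where the paper invokes unique divisibility.
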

	\begin{proof}
		As we have just remarked, the group $G$ is solvable and uniquely divisible. In
		particular, it is definably connected. To see that $\Ext(G,Z)=0$ we first prove
		it for abelian groups.
		
		\begin{claim*}
			If $A$ is a torsion-free abelian group definable in an o-minimal structure and
			$Z$ is a finite abelian group, then $\Ext(A,Z)=0$. 
		\end{claim*}
		\begin{claimproof*}
			In this case, since $A$ is definably connected, every abstract finite central extension
			of $A$ is abelian by \cite[Lem.\,2.8]{BPP10}. 
			So, the result follows from 
			\cite[pp.\,222-223]{lF70}. Nonetheless, we give a shorter proof for the sake of
			completeness.
			
			Write $A$ additively, set $n=|Z|$ and let $0\longrightarrow Z
			\stackrel{\mu}{\longrightarrow} \widehat A \stackrel{\pi}{\longrightarrow} A
			\longrightarrow 0$ be an abstract finite central group extension. Define $\psi:A\to
			\widehat A$ as $\psi(a)=nx$ where $x\in \widehat{A}$ satisfies that $\pi(x)$ is
			the unique element of $A$ such that $n\pi(x)=a$. Note that this is a
			well-defined homomorphism. Moreover, by noticing that $\pi\circ
			\psi=\mathrm{id}_A$, we get that $\ker(\psi)=0$. Once $\psi$ is a monomorphism and a section, we are done. To see this, define the homomorphism
			\[
			\gamma: A \times Z \to \widehat{A}, \ (a,z) \mapsto \psi(a)+\mu(z).
			\]
			Note that $\widehat A=\im(\psi) +
			\mu(Z)$, so the above homomorphism is a surjection. Furthermore, its kernel is
			trivial. Indeed, if $(a,z)\in \ker(\gamma)$, then $\pi(\psi(a))=\pi(-\mu(z))=0$,
			which implies that $a=0$ and hence $z=0$. So $A\times Z$ and
			$\widehat{A}$ are isomorphic, and it is routine to verify that both extensions
			are equivalent, by considering the natural maps. Therefore, we have seen that
			every finite central extension of $A$ splits trivially, so $\Ext(A,Z)=0$.
		\end{claimproof*}
Once we know the result for torsion-free abelian groups, we proceed by induction on the dimension of $G$, being the case $\dim(G)=0$ trivial.
		
Suppose that $G$ is infinite. As it is solvable, its commutator subgroup $G'$ is proper, definable and definably connected by \cite[Cor.\,6.8]{BJO12}. Thus, it has no 		finite-index proper subgroup, by Fact \ref{F:Connected}. Furthermore, we can consider the exact sequence 
\[		
1\longrightarrow G' \stackrel{\iota}{\longrightarrow} G		\stackrel{\beta}{\longrightarrow} G/G' \longrightarrow 1	
\]		
and so by Theorem \ref{T:Ext-Sequence} we obtain the exact sequence 
\[		
0\longrightarrow \Ext(G/G',Z) \stackrel{\beta^*}{\longrightarrow} \Ext(G,Z) \stackrel{\iota^*}{\longrightarrow} \Ext(G',Z).		
\]
The inductive hypothesis yields that $\Ext(G',Z)=0$ and we readily get that the map $\beta^*: \Ext(G/G',Z) \to \Ext(G,Z)$ is an isomorphism. So, since $G/G'$ is again torsion-free by the uniquely divisibility of $G$, the hypothesis on the dimension (or the Claim) gives $\Ext(G,Z)=\Ext(G/G',Z)=0$.
\end{proof}
	
Due to Lemma \ref{L:h_def}, given a 2-coboundary $f\in B^2(G,Z)$ we can describe a map $h:G\to Z$ with $h(1)=0$ such that $f=dh$. Nonetheless, in the torsion-free context we can describe $h$ more easily.

\begin{remark}
In the proof of the Claim, if we assume, as we may, that
		$\widehat A=A\times_f Z$ for some $2$-cocycle $f\in Z^2(A,Z)$,  then the map
		$\psi:A\to\widehat A$ is in fact $\psi(a)=n(x,0)$ where $x$ is the unique
		element of $A$ such that $nx=a$. Thus 
		\[
		\psi(a)=n(x,0) = \left( a, \sum_{i=1}^{n-1} f(x,ix) \right).
		\]
		So, setting $h:A\to Z$ to be $h(a)=\sum_{i=1}^{n-1} f(x,ix)$, we can easily see
		using that $\psi:A\to \widehat{A}$ is an homomorphism that $f=dh$ and hence
		$f\in B^2(A,Z)$. 
	\end{remark}

	\begin{cor}
		Let $G$ be a torsion-free definable group in an o-minimal structure. Given a
		$2$-cocycle $f:G\times G\to Z$, define
		\[
		h: G\rightarrow Z, \ x \mapsto \sum_{k=1}^{n-1}f(y,y^k),
		\]	
		where $y$ is the unique element of $G$ such that $y^n=x$ and $n=|Z|$. Then
		$f=dh$.	
	\end{cor}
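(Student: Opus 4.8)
The plan is to sidestep a direct verification of the cocycle identity $f=dh$, which would be awkward: once $G$ fails to be abelian, the unique $n$-th root of a product $x_1x_2$ bears no simple relation to the roots of $x_1$ and $x_2$, so expanding $h(x_1x_2)-h(x_1)-h(x_2)$ head-on is unpleasant. Instead I would exploit uniqueness. Since $G$ is torsion-free it is solvable, uniquely divisible, and in particular definably connected; hence the element $y$ with $y^n=x$ is well-defined and the formula for $h$ makes sense. By Proposition \ref{P:Torsion-free} we have $\Ext(G,Z)=0$, so the given $2$-cocycle $f$ is a $2$-coboundary. Therefore, by Lemma \ref{L:h_def}, there is a \emph{unique} map $h_0:G\to Z$ (necessarily with $h_0(1)=0$) such that $f=dh_0$.

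It then suffices to show that the map $h$ defined in the statement coincides with $h_0$. For this I would invoke the telescoping identity established inside the proof of Lemma \ref{L:h_def}: for every $y\in G$,
\[
\sum_{i=1}^{n-1} f(y,y^i) = \sum_{i=1}^{n-1}\bigl( h_0(y^{i+1}) - h_0(y) - h_0(y^i) \bigr) = h_0(y^n),
\]
where the first equality uses $f=dh_0$ and the second is the telescoping sum together with $n\,h_0(y)=0$ (as $Z$ has order $n$). Taking $y$ to be the unique $n$-th root of a given $x$, so that $y^n=x$, the left-hand side is exactly the definition of $h(x)$, whence $h(x)=h_0(y^n)=h_0(x)$.

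Consequently $h=h_0$, and since $f=dh_0$ we conclude $f=dh$, as claimed. I do not anticipate a genuine obstacle: the whole content is the observation that the sum $\sum_{k=1}^{n-1} f(y,y^k)$ appearing in the definition of $h$ is precisely the quantity that the proof of Lemma \ref{L:h_def} already evaluates to $h_0(y^n)$. The only points needing care are that unique divisibility of $G$ guarantees $y$ is well-defined, so that $h$ is a genuine function, and that definable connectedness is what licenses the uniqueness of $h_0$ through Fact \ref{F:Connected}.
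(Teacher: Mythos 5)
Your proof is correct, and it takes a genuinely shorter route than the paper's. Both arguments are powered by Proposition \ref{P:Torsion-free} (that $\Ext(G,Z)=0$), but you combine it directly with Lemma \ref{L:h_def}: the telescoping computation $\sum_{i=1}^{n-1} f(y,y^i)=h_0(y^n)$, carried out inside that lemma's proof, instantly identifies the closed formula $h$ with the primitive $h_0$ of $f$, since taking $y$ to be the unique $n$-th root of $x$ gives $h(x)=h_0(y^n)=h_0(x)$; note that you do not even need the uniqueness clause of Lemma \ref{L:h_def} you invoke, since this identity already shows $h=h_0$ for \emph{any} primitive $h_0$, whence $f=dh_0=dh$. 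The paper instead proceeds through the preceding Remark: it first checks, via the homomorphic section $\psi(a)=n(x,0)$ of the abelian case, that the difference cocycle $\tilde f:=f-dh$ vanishes on every pair of commuting elements; then, writing $\tilde f=d\tilde h$ (again by Proposition \ref{P:Torsion-free}), it proves inductively that $\tilde h(x^k)=k\tilde h(x)$ using $\tilde f(x,x^k)=0$, so $\tilde h(x^n)=n\tilde h(x)=0$, and divisibility of $G$ (every element is an $n$-th power) forces $\tilde h=0$ and hence $\tilde f=0$. Your approach buys brevity and renders the Remark logically unnecessary for this corollary; the paper's route buys the intermediate information that $f-dh$ annihilates all commuting pairs (the conceptual reason the formula works on abelian subgroups) before concluding it vanishes identically. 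The hypotheses you must account for, and do, are unique divisibility of torsion-free o-minimal groups, which makes $y$ and hence $h$ well defined, and definable connectedness, which via Fact \ref{F:Connected} underlies the uniqueness in Lemma \ref{L:h_def}.
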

	\begin{proof} 
		Consider the group $G\times_f Z$. As every definable abelian subgroup $A$ of $G$
		is again torsion-free, the previous remark yields that $f|_{A\times A}=dh|_{A\times A}$, now writing the
		group operation of $A$ multiplicatively. In particular, if we define
		$\tilde{f}:=f-dh$, then we obtain that $\tilde{f}(x,y)=0$ for all $x,y\in G$ with
		$xy=yx$.
		
		On the other hand, Proposition \ref{P:Torsion-free} implies that $\tilde{f}$ is
		a coboundary. So, there is some function $\tilde h:G\to Z$ such that $\tilde h(1)=0$ and 
		$\tilde{f}=d\tilde h$.   
		Now, inductively on $k\ge 1$, we see that $\tilde h(x^k) = k\tilde h(x)$ for
		every $x\in G$, the case $k=1$ being evident. Indeed, assuming $\tilde h(x^k)=k\tilde
		h(x)$, we obtain that
		\[
		0=\tilde{f}(x,x^k)=d\tilde h(x,x^k) = \tilde h(x^{k+1})-\tilde h(x)-\tilde
		h(x^k) =\tilde h(x^{k+1}) - (k+1)\tilde h(x).
		\]
		So, in particular we have proven that $\tilde h(x^n) = n\tilde h(x)=0$, since
		$|Z|=n$. Hence, as $G$ is divisible, we deduce that $\tilde h=0$ and so
		$\widetilde{f}=0$, as required. 
	\end{proof}
	
	Conversano proved in \cite[Thm.\,3.6]{aC09} (cf.\,\cite[Prop.\,2.1]{CP12}) that every definably connected definable group $G$ in an arbitrary o-minimal structure contains a maximal normal definable torsion-free subgroup,
	which we denote by $N(G)$. Furthermore, she also proved in \cite[Thm.\,4.3]{aC09} (cf.\,\cite[Prop.\,2.2]{CP12})  that for a solvable group $G$ the quotient $G/N(G)$ is definably compact, so it is
	abelian by \cite[Cor.\,5.4]{PPS00}. So, for every natural number $n\ge 2$ the subgroup of $n$-torsion elements
	of $G/N(G)[n]$ is finite and isomorphic to $(\Z/n\Z)^{\dim(G/N(G))}$ by the
	structure theorem for definably compact definably connected abelian groups due to Edmundo and Otero \cite{EO04} in o-minimal expansions of real closed fields and to Edmundo et al. \cite[Thm.\,1.1]{EMPRT17} in arbitrary o-minimal structures.

	\begin{theorem}\label{T:Sol}
		Let $G$ be a  definably connected solvable $\emptyset$-definable group in an o-minimal structure $M$ with constants for the elements of the finite abelian group $Z$. The inclusion map
		$\Ext_d(G,Z) \hookrightarrow \Ext(G,Z)$ is an isomorphism and both groups are isomorphic to
		the finite group $Z^{\dim(G/N(G))}$.
	\end{theorem}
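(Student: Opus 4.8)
The plan is to prove the two assertions—that $\Ext_d(G,Z)\hookrightarrow\Ext(G,Z)$ is an isomorphism, and that both groups are isomorphic to $Z^{\dim(G/N(G))}$—simultaneously, by induction on $\dim(G)$, using the structure of solvable o-minimal groups together with the torsion-free reduction provided by $N(G)$. Let me think about whether the induction should go through $N(G)$ or through the derived subgroup.

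Actually, let me reconsider the structure here.
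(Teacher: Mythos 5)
Your proposal stops before any argument begins: it announces an induction on $\dim(G)$ and then ends with ``let me reconsider,'' so there is nothing to verify. As it stands this is a statement of intent, not a proof, and the intended route is moreover not the one that works most directly. The paper does \emph{not} induct on dimension at the level of Theorem \ref{T:Sol}. Instead it makes a single pass through the exact sequence $1\to N(G)\to G\to \bar G\to 1$, where $\bar G=G/N(G)$ is definably compact and abelian (Conversano, plus \cite[Cor.\,5.4]{PPS00}). The three ingredients you would need to supply, and which your plan never reaches, are: (i) the vanishing $\Ext_\diamond(N(G),Z)=0$ for torsion-free definable groups (Proposition \ref{P:Torsion-free}); (ii) the definable inflation--restriction exact sequence (Theorem \ref{T:Ext-Sequence}, packaged as the commuting diagram of Corollary \ref{C:Key_Diagram}), which requires knowing $N(G)$ is definably connected --- this follows from unique divisibility of torsion-free o-minimal groups; and (iii) the abelian definably compact case, where $\Ext_d(\bar G,Z)\cong\Ext(\bar G,Z)$ is \cite[Thm.\,2.9]{BPP10} combined with Lemma \ref{L:Natural_Int}. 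Given these, the vanishing at the right end of both rows forces the inflation maps $\beta^*_\diamond$ to be isomorphisms, and the isomorphism for $\bar G$ transfers to $G$.

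Your hesitation between routing through $N(G)$ and through the derived subgroup is resolved in the paper by using both, but at different levels: the derived subgroup $G'$ drives the dimension induction \emph{inside} Proposition \ref{P:Torsion-free} (where $G'$ is definable, definably connected, and $G/G'$ is again torsion-free by unique divisibility), while $N(G)$ is used once at the top. Note also that the computation $\Ext(\bar G,Z)\cong Z^{\dim(\bar G)}$ is not a formality: it uses that finite central extensions of $\bar G$ are abelian \cite[Lem.\,2.8]{BPP10}, the functorial properties of $\Ext$ for abelian groups from Fuchs \cite[Sec.\,52]{lF70}, and the structure theorem $\bar G[k]\cong(\Z/k\Z)^{\dim(\bar G)}$ for definably compact definably connected abelian groups \cite[Thm.\,1.1]{EMPRT17}. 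A dimension induction on $G$ alone, without the exact-sequence machinery, would not produce this count, since equivalence classes of extensions do not decompose along an arbitrary chain of subgroups.
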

	\begin{proof}
		Let $N(G)$ be the maximal normal definable torsion-free subgroup of $G$, and let
		$\bar G=G/N(G)$ which is definably compact and abelian, as we have just
		remarked. Thus, as $\bar G$ is abelian, every abstract finite central extension of $\bar G$ is naturally interpretable by \cite[Thm.\,2.9]{BPP10}. So, the injective homomorphism  $\Ext_d(\bar G,Z)\hookrightarrow \Ext(\bar G,Z)$ given by Corollary \ref{C:Ext-Inclusion} is an isomorphism, by Lemma \ref{L:Natural_Int}.
		
		Now, as remarked above, the subgroup $N(G)$ is uniquely divisible, so it is
		definably connected. Thus, by considering the following exact sequence 
		\[
		1\longrightarrow N(G) \stackrel{\iota}{\longrightarrow} G
		\stackrel{\beta}{\longrightarrow} \bar G \longrightarrow 1,
		\]
		Corollary \ref{C:Key_Diagram}  yields the existence of the following commuting
		diagram
		\[
		\begin{tikzcd}
			0 \arrow{r}  & \Ext(\bar G,Z) \arrow{r}{\beta^*} & \Ext(G,Z)     
			\arrow{r}{\iota^*}      & \Ext(N(G),Z)\stackrel{\ref{P:Torsion-free}}{=}0  \\
			0 \arrow{r} & \Ext_d(\bar G,Z)
			\arrow{r}{\beta_d^*}\arrow[hookrightarrow]{u}{\cong} & \Ext_d(G,Z)     
			\arrow{r}{\iota_d^*}\arrow[hookrightarrow]{u}  &
			\Ext_d(N(G),Z)\stackrel{\ref{P:Torsion-free}}{=}0   
			\arrow[hookrightarrow]{u}{\cong}
		\end{tikzcd}
		\]
		Hence, we readily obtain that the homomorphisms
		$\beta^*_\diamond:\Ext_\diamond(\bar G,Z)\to \Ext_\diamond(G,Z)$ and hence
		$\Ext_d(G,Z)\hookrightarrow \Ext(G,Z)$ are isomorphisms.
		
		Finally, to compute $\Ext(\bar G,Z)$ write $Z$ as $\Z/k_1\Z\times \ldots\times
		\Z/k_r\Z$. Notice that every finite central extension of $\bar G$ is abelian
		\cite[Lem.\,2.8]{BPP10}. So, we can apply Theorem 52.2 and properties (F) and
		(D) from \cite[Sec.\,52]{lF70} to obtain\begin{align*}
			\Ext(\bar G,Z) & \cong \prod_j\Ext(\bar G,\Z/k_j\Z) \cong \prod_j\Ext(\bar
			G[k_j],\Z/k_j\Z) \\ & \cong \prod_j\Ext(\Z/k_j\Z,\Z/k_j\Z)^{\dim(\bar G)} \cong
			\prod_j (\Z/k_j\Z)^{\dim(\bar G)}\cong Z^{\dim(\bar G)},
		\end{align*} 
		where the third isomorphism follows from \cite[Thm.\,52.2]{lF70} and the
		structure theorem for definably compact definably connected abelian groups
		\cite[Thm.1.1]{EMPRT17}.
	\end{proof}

\begin{remark}\label{rmk:equivparam} As explained in Remark \ref{R:Parameters}, given a fixed subset of parameters $B$ of $M$, one may adapt Definition \ref{def:natinter} in order to introduce the notion of naturally interpretable with parameters over $B$. Notice that, as consequence of Theorem \ref{T:Sol}, we obtain the equivalence between being naturally interpretable with or without parameters, whenever the group $G$ is a definably connected solvable $\emptyset$-definable group in an arbitrary o-minimal structure (cf.\,\cite[Cor.\,1.2]{EJP11}).
\end{remark}
	
\section{On definable central extensions in o-minimal expansions of real closed fields}\label{s:Def_Cohom}

In this section we work within an o-minimal expansion $M$ of a real closed field. Henceforth, in this section $Z$ denotes a finite abelian group, which we can assume to be $\emptyset$-definable.  Notice that, by Remark \ref{R:Parameters}, naturally interpretable with or without parameters agree in this context. So, there is no need to be precise about parameters.

Let us briefly recall some topological results on o-minimal locally definable groups, which will play an essential role in the subsequent statements. 

A {\em locally definable group} $G$ of $M^k$ is an increasing countable union $G=\bigcup_{i\in \N} X_i$ of definable subsets 	$X_i$ together with a group operation $\cdot$  such that for each 	$i\in \N$ there is some $j\in \N$ with $X_i\cdot X_i\subset X_j$ and the
operation $X_i\times X_i \rightarrow X_j$ is definable. A homomorphism $f:G\to H$
between locally definable groups $G=\bigcup_{i\in \N} X_i$ and $H=\bigcup_{i\in \N} Y_i$ is locally definable if for every definable subset of the domain the restriction is a definable map, {\it i.e.} for each $n$ there is some $m$ such that $f(X_n)\subset Y_m$ and $f|_{X_n}:X_n\to Y_m$ is definable.
	
Let $G=\bigcup_{i\in \N} X_i$ be a locally definably connected definable group in an o-minimal expansion of a real closed field. The {\em o-minimal fundamental group} $\pi_1(G)$ of $G$ is defined in the usual way, but with loops and homotopies being definable. So, the group $\pi_1(G)$ is the group formed by the definable homotopy equivalence classes of continuous definable loops 	$\sigma:[0,1]\to M^k$ with $\sigma(0)=\sigma(1)=1_G$ and $\sigma([0,1])\subset X_n$ for some $n$, where as remarked above the equivalence between two loops is witnessed by a definable homotopy $[0,1]\times[0,1]\to X_m$ between them for a large enough $m$. As usual, the group operation is given by concatenation of loops. Edmundo \cite{mE05} and Edmundo and Eleftheriou \cite[Thm.\,1.4]{EE07} proved that, as in the topological setting, the o-minimal fundamental group $\pi_1(G)$ is naturally 	isomorphic to the kernel of the so-called {\em universal o-minimal covering} of $G$. The universal o-minimal covering of $G$ was first constructed in
\cite[Sec.\,3]{mE05} and it consists of a locally definable group $\widetilde{G}$ together with a locally definable surjective homomorphism $p_G:\widetilde{G}\to G$ with countable kernel such that $\pi_1(\widetilde G)$ is trivial.
	
As stated in \cite[Fact.\,6.13]{BO10loc} (cf.\,\cite[Thm.\,5.11]{DK84}) we can also describe $\widetilde{G}$ in a rather simple way, see the proof of \cite[Fact.\,3.4.13]{eB09} for a detailed explanation. Namely, consider the collection $\mathcal{P}$ of all locally definable continuous paths $\alpha:[0,1]\rightarrow G$ such that $\alpha(0)=1_G$. Let $\sim$ be the 	equivalence relation on $\mathcal{P}$ defined as follows: $\alpha \sim \beta$ if and only if $\alpha(1)=\beta(1)$ and $[\alpha*\overline\beta]=[0]$ in $\pi_1(G)$, where $*$ denotes the concatenation of maps and $\overline\beta$ is the map $x\mapsto \beta(1-x)$. Denote by $\alpha_{\sharp}$ the equivalence class of $\alpha\in\mathcal P$ and define $\widetilde{G}=\mathcal{P}/\sim$, with the group operation induced by the path product. Let then $p_G:\widetilde{G}\rightarrow G$ be defined as $p_G(\alpha_{\sharp})= \alpha(1)$ and notice that the natural map $\text{ker}(p_G)\to \pi_1(G)$, defined by $\alpha_{\sharp}\mapsto [\alpha]$, is an isomorphism. 

We summarize some key properties of the universal covering in the following facts.

\begin{fact}\label{F:Universal0}
Let $G$ and $K$ be two definably connected definable groups in an o-minimal expansion of a real closed field. If $\rho:K\to G$ is a definable homomorphism, then we have the following commutative diagram of locally definable
homomorphisms
\[
\begin{tikzcd}
	0 \arrow{r} & \pi_1(K) \arrow{r}{\cong}\arrow{d}{\rho_*} & \ker(p_{K})
	\arrow{r}\arrow{d}  & \widetilde{K}      
	\arrow{r}{p_{K}}\arrow{d}{\widetilde\rho}        & K \arrow{r}\arrow{d}{\rho}  & 1
	\\
	0 \arrow{r}  & \pi_1(G) \arrow{r}{\cong} & \ker(p_G) \arrow{r}  & \widetilde G
	\arrow{r}{p_G}       & G       \arrow{r} & 1 
\end{tikzcd}
\]
where  $\rho_*$ and $\widetilde\rho$ are defined as $\rho_*([\sigma])=[\rho\circ\sigma]$ and $\widetilde\rho(\alpha_\sharp)= (\rho\circ\alpha)_\sharp$ respectively, and both rows are central exact sequences. Furthermore, we have
\begin{enumerate}
	\item If $\rho_*$ is injective and $\rho$ has finite kernel, then $|\ker(\widetilde\rho)|\le|\ker(\rho)|$.
	\item If $\rho$ is surjective and with finite kernel, then $\widetilde\rho$ is an isomorphism.
\end{enumerate}
\end{fact}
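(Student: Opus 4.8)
The plan is to verify the diagram first and then deduce the two numbered assertions, each by a short diagram chase fed by the covering--space machinery recalled above.

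First I would check well-definedness and naturality of all the maps using the path-space description $\widetilde G=\mathcal P/\!\sim$. The formula $\widetilde\rho(\alpha_\sharp)=(\rho\circ\alpha)_\sharp$ is transparent there: if $\alpha\sim\beta$ then $\alpha(1)=\beta(1)$ forces $\rho(\alpha(1))=\rho(\beta(1))$, while $[\alpha*\overline\beta]=[0]$ gives $[\rho\circ(\alpha*\overline\beta)]=\rho_*([\alpha*\overline\beta])=[0]$, so $\rho\circ\alpha\sim\rho\circ\beta$ and $\widetilde\rho$ is well defined. It is a homomorphism because the group law on $\widetilde G$ is induced from that of $G$ and $\rho$ is a homomorphism, and it is locally definable because $\rho$ is definable. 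The identity $p_G\circ\widetilde\rho=\rho\circ p_K$ is immediate from $p_G(\gamma_\sharp)=\gamma(1)$, which gives commutativity of the right-hand square; restricting $\widetilde\rho$ to $\ker(p_K)$ and using the isomorphism $\ker(p_K)\cong\pi_1(K)$, $\alpha_\sharp\mapsto[\alpha]$, recovers $\rho_*$, so the left-hand square commutes as well. That both rows are central exact sequences is exactly the content of the universal covering theory of \cite{mE05, EE07} (cf.\ \cite{BO10loc}): $p_G$ is a locally definable surjection whose kernel is central in $\widetilde G$ and naturally isomorphic to $\pi_1(G)$.

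For assertion (1) I would chase the commutative right square. Given $x\in\ker(\widetilde\rho)$, commutativity yields $\rho(p_K(x))=p_G(\widetilde\rho(x))=1_G$, so $p_K$ restricts to a homomorphism $\ker(\widetilde\rho)\to\ker(\rho)$. Its kernel consists of the $x\in\ker(\widetilde\rho)$ with $p_K(x)=1_K$, i.e.\ $x\in\ker(p_K)$; but on $\ker(p_K)$ the map $\widetilde\rho$ agrees with $\rho_*$, which is injective by hypothesis, so such an $x$ is trivial. Hence $p_K|_{\ker(\widetilde\rho)}$ is injective into the finite group $\ker(\rho)$, giving $|\ker(\widetilde\rho)|\le|\ker(\rho)|$.

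For assertion (2) the key point is that a surjective definable homomorphism $\rho:K\to G$ of definably connected groups with finite kernel is a definable covering map, so one may invoke definable path- and homotopy-lifting. For surjectivity of $\widetilde\rho$, I would lift an arbitrary $\beta\in\mathcal P$ (a path in $G$ from $1_G$) to a locally definable path $\alpha$ in $K$ with $\alpha(0)=1_K$ and $\rho\circ\alpha=\beta$, whence $\widetilde\rho(\alpha_\sharp)=\beta_\sharp$. For injectivity, suppose $\widetilde\rho(\alpha_\sharp)=0$, i.e.\ $\rho\circ\alpha$ is a definably null-homotopic loop at $1_G$; lifting the null-homotopy rel endpoints and using that lifts of constant paths are constant because the fibre $\ker(\rho)$ is finite, one gets $\alpha(1)=1_K$ together with a definable null-homotopy of $\alpha$, so $\alpha_\sharp=0$. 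The main obstacle is precisely the definability bookkeeping here: one must guarantee that the path and homotopy lifts produced by the covering map are themselves locally definable and that the lifted homotopies remain rel endpoints, which is where the o-minimal covering theory of \cite{EE07} and the explicit description following \cite{BO10loc, eB09} are needed in place of the purely topological statements.
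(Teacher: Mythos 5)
Your diagram verification and your proof of (1) coincide with the paper's: the paper likewise treats commutativity as immediate from the path-space description, obtains centrality of $\ker(p_\bullet)$ from the fact that it is countable (dimension $0$), hence central by \cite[Cor.\,3.16]{mE06}, and proves (1) by exactly your chase ($p_K(\ker(\widetilde\rho))\subset\ker(\rho)$, and $\ker(\widetilde\rho)\cap\ker(p_K)$ is trivial because $\widetilde\rho$ restricts to the injective $\rho_*$ there). Surjectivity in (2) is also the paper's argument, via unique definable path lifting \cite[Prop.\,2.6 and Prop.\,2.11]{EO04}. Where you genuinely diverge is injectivity in (2): the paper quotes \cite[Cor.\,2.8 and Prop.\,2.11]{EO04} to get $\rho_*$ injective, applies (1) to conclude that $\ker(\widetilde\rho)$ is finite, and then invokes Edmundo's computation \cite[Prop.\,3.4 and 3.12]{mE05} giving $\ker(\widetilde\rho)\cong\pi_1(\widetilde G)/\widetilde\rho_*(\pi_1(\widetilde K))$, which vanishes since $\pi_1(\widetilde G)$ is trivial. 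You instead run the classical monodromy argument directly at the level of paths: lift a definable null-homotopy of $\rho\circ\alpha$ along the definable covering $\rho$, note that the restrictions of the lift to the vertical sides are lifts of constant paths into the finite (discrete) fibre and hence constant, so the lifted homotopy is rel endpoints, giving $\alpha(1)=1_K$ and a definable null-homotopy of $\alpha$. This is correct provided the definable homotopy lifting property for the covering $\rho$ is available, which it is in the o-minimal theory you cite (indeed \cite[Cor.\,2.8]{EO04}, used by the paper, is essentially the loop case of your argument). Your route is more self-contained and bypasses both step (1) and the exact-sequence computation from \cite{mE05}; the paper's route is citation-efficient and avoids redoing the lifting bookkeeping, which is exactly the part you flag as the main obstacle.
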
	
\begin{proof}The commutativity of the diagram is clear. Also, note that each locally definable subgroup $\ker(p_\bullet)$ is countable ({\it i.e.} has dimension $0$)  and so it is central by \cite[Cor.\,3.16]{mE06}. Now, we prove (1) and (2).
	
\noindent (1) Let $N=\ker(\widetilde \rho)$. Note that $p_{K}(N)\subset \ker(\rho)$, so $p_K(N)$ is finite. It then follows that the index $[N:N\cap \ker(p_K)]\le |\ker(\rho)|$. But by assumption $\rho_*$ and hence $\widetilde\rho|_{\ker(p_K)}$ are injective, so applying $\widetilde\rho$ we get that $N\cap \ker(p_K)$ is trivial and $N$ is finite of size at most $|\ker(\rho)|$.

\noindent (2) We deduce by considering lifting of paths that
$\widetilde \rho$ is surjective. Indeed, given a definable path $\alpha:[0,1]\to G$, by \cite[Prop.\,2.6 and Prop.\,2.11]{EO04} there is a unique definable path $\alpha':[0,1]\to K$ with $\rho\circ\alpha'=\alpha$, so $\widetilde\rho(\alpha'_\sharp) = \alpha_\sharp$. This shows that $\widetilde\rho$ is surjective. On the other hand, as $\rho$ is onto and with finite kernel, by \cite[Cor.\,2.8 and
Prop.\,2.11]{EO04} the map $\rho_*$ is injective and so $\ker(\widetilde \rho)$ is finite by (1). By \cite[Prop.\,3.4 and 3.12]{mE05} we then have that
\[
\ker(\widetilde\rho)\cong \pi_1(\widetilde G)/\widetilde\rho_*(\pi_1(\widetilde K)).
\]
Since $\pi_1(\widetilde G)$ is trivial, the map $\widetilde\rho$ is injective.
\end{proof}

	
	
	\begin{fact}\label{F:Universal}
		Let $G$ be a definably connected definable group in an o-minimal expansion of a real closed field and let $\rho:G_1\to G$ be a
		definable finite central extension with $G_1$ definably connected. Then, there is
		a locally definable onto homomorphism $p_{1}:\widetilde{G}\to G_1$ such that
		$\rho\circ p_{1} = p_G$. 
	\end{fact}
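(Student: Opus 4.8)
The plan is to reduce everything to Fact~\ref{F:Universal0} applied to the homomorphism $\rho\colon G_1\to G$ itself. Since $\rho$ is a finite central extension, it is surjective and has finite kernel, so hypothesis~(2) of Fact~\ref{F:Universal0} is satisfied. This yields at once that the induced map $\widetilde\rho\colon \widetilde{G_1}\to\widetilde G$ between the universal o-minimal coverings is an isomorphism of locally definable groups, and moreover the commutative diagram there provides the relation $\rho\circ p_{G_1}=p_G\circ\widetilde\rho$.

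With this in hand I would simply define
\[
p_1:=p_{G_1}\circ\widetilde\rho^{-1}\colon \widetilde G\longrightarrow G_1.
\]
Then $\rho\circ p_1 = \rho\circ p_{G_1}\circ\widetilde\rho^{-1} = p_G\circ\widetilde\rho\circ\widetilde\rho^{-1}=p_G$, which is exactly the desired identity. Surjectivity of $p_1$ is immediate, since $\widetilde\rho^{-1}$ is a bijection and $p_{G_1}$ is onto, being a universal covering map. Local definability of $p_1$ then follows from the fact that it is a composition of locally definable homomorphisms.

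The only point that needs a word of care is the local definability of $\widetilde\rho^{-1}$. This is precisely what is packaged into the assertion of Fact~\ref{F:Universal0}(2) that $\widetilde\rho$ is an \emph{isomorphism} in the category of locally definable groups: such an isomorphism is by definition a locally definable bijective homomorphism whose inverse is again locally definable, so $\widetilde\rho^{-1}$ stays within this category and so does the composition defining $p_1$. I do not expect any genuine obstacle beyond invoking Fact~\ref{F:Universal0}(2) correctly; the substance of the statement really lives in that earlier fact, and here it is only a matter of transporting the covering map $p_{G_1}$ of $G_1$ to $\widetilde G$ along the isomorphism $\widetilde\rho$.
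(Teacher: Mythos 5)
Your proposal is correct and coincides with the paper's own (one-line) proof: the authors likewise apply Fact~\ref{F:Universal0}(2) to $\rho$ and set $p_{1}=p_{G_1}\circ\widetilde{\rho}^{-1}$, with the identity $\rho\circ p_1=p_G$ following from the commutative diagram exactly as you compute. Your remark that $\widetilde{\rho}^{-1}$ is locally definable, being the inverse of an isomorphism of locally definable groups, is the same point the paper leaves implicit.
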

	\begin{proof} It suffices to apply Fact \ref{F:Universal0} and set $p_{1}=p_{G_1}\circ\widetilde{\rho}\inv$.
	\end{proof}

Recall that a definable group $G$ is {\em simply-connected} if it is path-connected and $\pi_1(G)$ is trivial.
	
	\begin{lemma}\label{L:Simply-Con}
	Let $G$ be a simply-connected group definable in an o-minimal expansion of a real closed field. Then $\Ext_d(G,Z)$ is trivial.
	\end{lemma}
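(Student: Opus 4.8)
The plan is to show that every $\emptyset$-definable $2$-cocycle $f\in Z_d^2(G,Z)$ already lies in $B_d^2(G,Z)$, i.e. that $[f]=0$ in $\Ext_d(G,Z)$. Equivalently, I aim to produce a $\emptyset$-definable homomorphic section $s\colon G\to\widehat G$ of the definable central extension $\widehat G=G\times_f Z$: writing $s(g)=(g,t(g))$ yields a definable $t\colon G\to Z$ with $t(1)=0$, and the fact that $s$ is a homomorphism forces $f=dt$, so $f\in B_d^2(G,Z)$. Since $f$ is arbitrary, this gives $\Ext_d(G,Z)=0$.

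To apply the covering machinery I first pass to the identity component $G_1=\widehat G^{\circ}$, which is definable and definably connected. The restriction $\rho:=\pi|_{G_1}\colon G_1\to G$ is a definable homomorphism whose kernel $G_1\cap\mu(Z)$ is finite and central; its image $\pi(\widehat G^{\circ})$ is a definably connected finite-index subgroup of $G$, hence equals $G$ by Fact \ref{F:Connected}. Thus $\rho\colon G_1\to G$ is a definable finite central extension with $G_1$ definably connected, and Fact \ref{F:Universal} supplies a locally definable surjective homomorphism $p_1\colon\widetilde G\to G_1$ with $\rho\circ p_1=p_G$.

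The heart of the argument uses simple-connectedness. Since $\pi_1(G)$ is trivial, so is $\ker(p_G)\cong\pi_1(G)$. Recalling the explicit description $\widetilde G=\mathcal P/\!\sim$ of the universal cover, when $\pi_1(G)$ vanishes two paths are $\sim$-equivalent exactly when they share an endpoint, so the endpoint map $p_G$ identifies $\widetilde G$ with $G$; thus we may take $\widetilde G=G$ and $p_G=\operatorname{id}_G$. Under this identification $p_1\colon G\to G_1$ is a locally definable homomorphism whose domain is the single definable set $G$, hence it is definable by the very definition of a locally definable homomorphism. Setting $s:=p_1\colon G\to G_1\subseteq\widehat G$, we get a $\emptyset$-definable homomorphism with $\pi\circ s=\rho\circ p_1=p_G=\operatorname{id}_G$, i.e. a definable homomorphic section, and the proof concludes as in the first paragraph.

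I expect the delicate step to be precisely this passage from a merely locally definable object to a genuinely definable map: one must justify that triviality of $\pi_1(G)$ collapses the universal cover $\widetilde G$ onto the definable group $G$, so that the locally definable $p_1$ is definable on all of $G$ rather than only on each definable piece of $\widetilde G$. The remaining ingredients—checking that $\rho$ is onto via Fact \ref{F:Connected} and the cocycle/coboundary bookkeeping—are routine.
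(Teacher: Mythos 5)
Your proof is correct and follows essentially the same route as the paper's: both identify $\widetilde G$ with $G$ via the triviality of $\pi_1(G)$, apply Fact \ref{F:Universal} to the restricted extension $\widehat G^\circ\to G$ to obtain a definable homomorphic section, and conclude that the cocycle is a definable coboundary. Your additional details (surjectivity of $\pi|_{\widehat G^\circ}$ via Fact \ref{F:Connected}, and the definability of the locally definable $p_1$ on the single definable set $G$) merely make explicit steps the paper leaves implicit.
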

\begin{proof}
Note first that $G$ is definably connected, since it is path-connected, and that $p_G:\widetilde{G}\to G$ is an isomorphism, since $\pi_1(G)$ is trivial. We may assume that $\widetilde{G}=G$.

It suffices to prove that every definable finite central extension 
\[
0\longrightarrow Z\lhook\joinrel\longrightarrow G_1\stackrel{\rho}{\longrightarrow} G\longrightarrow 1
\] splits trivially. By Fact \ref{F:Universal} there is some definable onto homomorphism $p_1: G\to G_1^\circ$ such that $\rho\circ p_1=\mathrm{id}_G$.
 This implies that 
 the exact sequence 
\[
0\longrightarrow Z\cap G_1^\circ \lhook\joinrel\longrightarrow G_1^\circ \longrightarrow G\longrightarrow 1
\]
 splits trivially. However, this forces that $Z\cap G_1^\circ$ is the trivial subgroup and consequently $G_1=G_1^\circ\times Z\cong G\times Z$, as desired. 
\end{proof}

Recall that a definable subset $X\subset M^k$ is called {\em definably contractible} if there is an element $x_0\in X$ and a definably homotopy $H:X\times[0,1]\to X$ between the identity map $\mathrm{id}_X$ on $X$ and the constant map $x\mapsto x_0$. In particular, a definably contractible definable group $G$ satisfies that $\pi_1(G)=\{0\}$.


\begin{prop}\label{P:Def_Contractible}Let $G$ be a definably connected definable group in an o-minimal expansion of a real closed field. Let $K$ and $L$ be definably connected definable subgroups of $G$, such that $L$ is definably contractible, $K\cap L=\{1\}$ 	and $G=K\cdot L$. Then, the homomorphism
\[
\Ext_d(G,Z) \to \Ext_d(K,Z), \ [f] \mapsto [f|_{K\times K}]
\]
is surjective. 
\end{prop}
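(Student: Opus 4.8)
The plan is to exploit the product decomposition geometrically and then to lift central extensions of $K$ through the universal covering machinery developed above.

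First I would pin down the underlying geometry. Since $K\cap L=\{1\}$ and $G=K\cdot L$, the multiplication map $\mu\colon K\times L\to G$, $(k,l)\mapsto kl$, is a definable continuous bijection between definable manifolds of the same dimension, hence a definable homeomorphism by o-minimal invariance of domain. Writing $\mu\inv(g)=(\kappa(g),\lambda(g))$, the map $r=\kappa\colon G\to K$ is definable and continuous with $r|_K=\mathrm{id}_K$, and using a definable contraction $H\colon L\times[0,1]\to L$ of $L$ to $1_L$ the map $\bar H(kl,t)=k\cdot H(l,t)$ is a definable strong deformation retraction of $G$ onto $K$. In particular $\iota\colon K\hookrightarrow G$ is a definable homotopy equivalence, so by homotopy invariance of the o-minimal fundamental group the induced map $\iota_*\colon\pi_1(K)\to\pi_1(G)$ is an isomorphism.

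Next I would reduce to connected extensions and transport them through $\widetilde G$. Every class of $\Ext_d(K,Z)$ is the pushforward, along an inclusion $Z_0\hookrightarrow Z$ of coefficient groups, of the class of a definably connected central extension $\rho\colon\widehat K\to K$ by $Z_0=Z\cap\widehat K^{\circ}$ (concretely $\widehat K\cong\widehat K^{\circ}\times_{Z_0}Z$); since pushforward in the coefficients commutes with the restriction $\iota_d^*$ and preserves definability, it suffices to extend such a connected $\widehat K$ to $G$. By Fact \ref{F:Universal} there is a locally definable surjection $p_1\colon\widetilde K\to\widehat K$ with $\rho\circ p_1=p_K$, so $\widehat K=\widetilde K/\ker(p_1)$ with $\ker(p_1)\le\pi_1(K)$ and $\pi_1(K)/\ker(p_1)\cong Z_0$. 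Setting $\Lambda=\iota_*(\ker(p_1))\le\pi_1(G)$ and $\widehat G=\widetilde G/\Lambda$, the map $p_G$ descends to a central extension $\widehat G\to G$ by $\pi_1(G)/\Lambda\cong Z_0$; and the naturality ladder of Fact \ref{F:Universal0} applied to $\iota$ shows that $\widetilde\iota$ carries $\ker(p_1)$ into $\Lambda$ and induces an embedding $\widehat K\hookrightarrow\widehat G$ identifying $\widehat K$ with the restriction of $\widehat G$ over $K$. Hence $\iota_d^*[\widehat G]=[\widehat K]$, and pushing forward to $Z$ yields the desired surjectivity.

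The main obstacle I expect is definability. The group $\widehat G=\widetilde G/\Lambda$ is a priori only locally definable, being a quotient of the universal cover; the crucial point is that, as $[\pi_1(G):\Lambda]=|Z_0|$ is finite, this finite cover of the definable group $G$ is in fact definable, so that $[\widehat G]$ genuinely lies in $\Ext_d(G,Z_0)$. Here the contractibility of $L$ (rather than mere simple-connectedness) is what provides the honest definable deformation retraction $r$, and hence the clean control of $\pi_1$ and of the lift; the remaining work is the routine verification that the embedding $\widehat K\hookrightarrow\widehat G$ realizes the restriction map at the level of $\Ext_d$.
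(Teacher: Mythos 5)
Your proposal is correct and follows essentially the same route as the paper's proof: both use the definable homeomorphism $K\times L\to G$ to see that $\iota_*\colon\pi_1(K)\to\pi_1(G)$ is an isomorphism, reduce to the definably connected extension $\widehat{K}^{\circ}$ of $K$ by $Z_0=Z\cap\widehat{K}^{\circ}$, transport its kernel $\Gamma\le\ker(p_K)$ through $\widetilde{\iota}$, and pass to the quotient $\widetilde{G}/\widetilde{\iota}(\Gamma)$, which is definable because it covers the definable group $G$ with finite kernel. The only cosmetic differences are that the paper obtains the $\pi_1$ isomorphism directly from $\pi_1(K\times L)\cong\pi_1(K)\times\pi_1(L)$ rather than via your deformation retraction (only $\pi_1(L)=0$ is actually used), and it closes the argument by explicitly extending the definable section $x\mapsto(x,0)$ of $\widehat{K}^{\circ}\to K$ to a definable section $G\to G_1^{\circ}$ so as to produce the definable cocycle restricting to $[g]$ --- the step you correctly flag as a routine verification.
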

\begin{proof}
Note first that the definable map $K\times L\to G$ given by $(x,y)\mapsto xy$ is a homeomorphism, so
\[
\pi_1(K) \cong \pi_1(K)\times \pi_1(L) \cong \pi_1(K\times L) \cong \pi_1(G).
\]
In fact, if $\iota:K\hookrightarrow G$ denotes the inclusion map, then the above isomorphism is obtained via the map $\iota_*: \pi_1(K)\to \pi_1(G)$. Thus $\widetilde{\iota}:\widetilde{K}\to\widetilde{G}$ is injective by Fact \ref{F:Universal0}.

Now, given $g\in Z^2_d(K,Z)$, let $\widehat K=K\times_g Z$ and $Z_1=Z\cap \widehat{K}^\circ$. Note that 
\[
0\longrightarrow Z_1\lhook\joinrel\longrightarrow \widehat{K}^\circ \stackrel{\pi_0}{\longrightarrow} K \longrightarrow 1
\] 
is a finite central extension and that a definable section $s:K\rightarrow \widehat{K}^\circ$ is also a section of $s:K\rightarrow \widehat{K}$.  So, the definable $2$-cocycle $g_s$ determined by $s$ is equivalent to $g$. Thus, we may assume that $\im(g)\subset Z_1$ and  $\widehat K^\circ=K\times_g Z_1$.
		
By the universal property of $p_K:\widetilde K\to K$ (Fact \ref{F:Universal}), there is a locally definable surjective homomorphism $\gamma:\widetilde{K}\to \widehat{K}^\circ$ with $\pi_0\circ \gamma=p_K$. We then have the following commutative diagram:
\[
\begin{tikzcd}
		0 \arrow{r}  & \ker(p_G) \arrow{r}  & \widetilde G
	\arrow{r}{p_G}       & G       \arrow{r} & 1  \\
	0 \arrow{r}  & \ker(p_K) \arrow{r}\arrow{d}\arrow[hook]{u}{\cong} & \widetilde K      
	\arrow{r}{p_K}\arrow{d}{\gamma}  \arrow[hook]{u}{\widetilde \iota}      & K \arrow{r}\arrow[equal]{d}\arrow[hook]{u}{\iota}  & 1 \\
	0 \arrow{r} & Z_1 \arrow{r}  & \widehat{K}^\circ \arrow{r}{\pi_0}       & K    
	\arrow{r} & 1 
\end{tikzcd}
\]
Let $\Gamma=\text{ker}(\gamma)$, so $\Gamma\subset \ker(p_K)$ and $\widetilde{K}/\Gamma$ is isomorphic to $\widehat{K}^\circ$, which naturally induces an isomorphism between $\text{ker}(p_K)/\Gamma$ and $Z_1$, by the left column. Finally, since $\widetilde{\iota}(\Gamma)$ is contained in the central subgroup $\ker(p_G)$ of $\widetilde{G}$, we can consider the locally definable subgroup $G_1^\circ:=\widetilde{G}/\widetilde{\iota}(\Gamma)$ and the surjective group homomorphism
\[
\pi:G_1^\circ\rightarrow G,\ [x]\mapsto p_G(x).
\]
The kernel of $\pi$ is $\ker(p_G)/\widetilde{\iota}(\Gamma)=\widetilde{\iota}(\ker(p_K))/\widetilde{\iota}(\Gamma)$, which is isomorphic to $\ker(p_K)/\Gamma$ and hence to $Z_1$. As $G_1^\circ$ is locally definable and $Z_1$ is finite, we deduce that $G_1^\circ$ is definable. We have obtained the following commutative diagram
\[
\begin{tikzcd}
	0 \arrow{r}  & \ker(\pi) \arrow{r} & G_1^\circ \arrow{r}{\pi} & G \arrow{r}  & 1 \\
	0 \arrow{r} & Z_1 \arrow{r}\arrow[hook]{u}{\cong} & \widehat{K}^\circ \arrow{r}{\pi_0}\arrow[hook]{u}{\iota_1}      & K       \arrow{r}\arrow[hook]{u} & 1 
\end{tikzcd}
\]
where both rows are exact. Moreover, the definable section $s_0:K\to\widehat{K}^\circ$ defined by $x\mapsto (x,0)$ can be extended to a definable section $s:G\to G_1^\circ$ satisfying $s|_K = \iota_1\circ s_0$. Thus, regarding the $2$-cocycle $f\in Z_d^2(G,Z_1)$ determined by $s$  as a $2$-cocycle in $Z_d^2(G,Z)$, we see that $[f|_{K\times K}]=[g]$ and in fact note that $G_1^\circ$ corresponds to the connected component of $G\times_f Z$.
\end{proof}
	
While in general an o-minimal group $G$ might not possess a maximal definably compact subgroup (see \cite[Ex.\,5.3]{aS94}), Conversano proved \cite[Thm.\,1.2]{aC14}:

\begin{fact}\label{fact:conversano}
Let $G$ be a definably connected definable group in an o-minimal structure. Set $\bar G := G/N(G)$. Then $\bar G$ has a maximal definably compact subgroup $\bar K$, which is definably connected and it is unique up to conjugation. Moreover, there is a definable torsion-free subgroup $\bar L$ in $\bar G$ such that $\bar G=\bar K\cdot \bar L$ and $\bar K\cap \bar L=\{\bar 1\}$.

\end{fact}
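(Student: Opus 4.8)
The plan is to pass to $\bar G = G/N(G)$ and exploit that its torsion-free radical is trivial, that is, $N(\bar G)=\{\bar 1\}$, so that it suffices to prove the decomposition for a definably connected group whose maximal normal definable torsion-free subgroup is trivial. I would first record this reduction as follows: if $\bar H$ were a nontrivial torsion-free normal definable connected subgroup of $\bar G$, its preimage $H$ in $G$ would be normal, definable and definably connected, and by the multiplicativity of the o-minimal Euler characteristic one would have $\chi(H)=\chi(N(G))\cdot\chi(\bar H)=(\pm1)(\pm1)=\pm1$, so that $H$ is torsion-free by Strzebonski \cite{aS94}. This contradicts the maximality of $N(G)$ unless $\bar H$ is trivial, giving $N(\bar G)=\{\bar 1\}$.

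Next I would split $\bar G$ along its solvable radical $R=R(\bar G)$, the maximal normal definable solvable connected subgroup, producing a short exact sequence $1\to R\to \bar G\to S\to 1$ with $S=\bar G/R$ semisimple. For the semisimple quotient $S$, which is a central product of definably simple groups, I would invoke the structure theory of definably simple o-minimal groups: each factor is definably isomorphic to a semialgebraic simple group over a real closed field, and such a group admits an Iwasawa-type decomposition $S=K_S\cdot L_S$ into a maximal definably compact subgroup $K_S$ and a torsion-free factor $L_S$ (the analogue of the $AN$-part), obtained by transferring the classical $KAN$ decomposition to the real closed field; the factor $L_S$ is uniquely divisible and definably contractible. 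For the solvable radical $R$, I would use Conversano's solvable result quoted earlier in the paper \cite{aC09}: $R/N(R)$ is definably compact and abelian, while $N(R)$ is torsion-free, which already yields a compact-plus-torsion-free splitting at the level of $R$.

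The delicate assembly step is to combine these pieces into a global decomposition $\bar G=\bar K\cdot \bar L$ with $\bar K$ definably compact and definably connected, $\bar L$ a torsion-free definable subgroup, and $\bar K\cap\bar L=\{\bar 1\}$, so that the multiplication map $\bar K\times\bar L\to\bar G$ is a definable homeomorphism. Here I would lift the maximal definably compact subgroup of $S$ and combine it with the compact part of $R$, organising the torsion-free directions coming from $L_S$ and from $N(R)$ into a single definably contractible complement $\bar L$; the vanishing of $N(\bar G)$ is what prevents these torsion-free directions from collapsing into a normal subgroup. The contractibility of torsion-free groups then guarantees that $\bar L$ contributes no fundamental group, so that $\pi_1(\bar G)\cong\pi_1(\bar K)$ and the homogeneous space $\bar G/\bar K\cong\bar L$ is definably contractible, exactly the phenomenon exploited later in Proposition \ref{P:Def_Contractible}.

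The main obstacle I expect is the uniqueness of $\bar K$ up to conjugacy. In the Lie setting this is Cartan's fixed point theorem applied to the action of a compact group on the non-positively curved symmetric space $G/K$; in the o-minimal setting there is neither metric nor curvature to exploit, so one must replace it by a purely o-minimal fixed-point argument: any definably compact subgroup of $\bar G$ acts definably on the definably contractible space $\bar G/\bar K\cong\bar L$, and one needs an o-minimal analogue of the Cartan--Iwasawa fixed point theorem producing a definable fixed point, whence conjugacy into $\bar K$. Establishing such a fixed-point theorem — through the o-minimal homotopy and cohomology of definably contractible spaces together with the unique divisibility of $\bar L$ — is the technical heart of the argument and the step least amenable to a short proof, which is why I would rely here on Conversano's full treatment in \cite{aC14}.
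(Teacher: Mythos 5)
The first thing to note is that the paper does not prove this statement at all: it is labelled a \emph{Fact} and quoted verbatim from Conversano \cite[Thm.\,1.2]{aC14}, the surrounding text only remarking that a maximal definably compact subgroup can fail to exist in $G$ itself (\cite[Ex.\,5.3]{aS94}), which is why one passes to $\bar G=G/N(G)$. So there is no internal proof to compare yours against; the honest benchmark is Conversano's paper, and your own proposal ultimately defers to it at exactly the decisive points.

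Judged as a self-contained argument, the proposal has genuine gaps at those points. First, the assembly step: a maximal definably compact subgroup $K_S$ of $S=\bar G/R$ does not lift to a definably compact subgroup of $\bar G$ --- its preimage is an extension of $K_S$ by $R$ and is typically not definably compact --- so ``lift and combine with the compact part of $R$'' secretly requires a Levi-type splitting of $\bar G$; o-minimal Levi decompositions are themselves delicate (in \cite{CP13} Levi subgroups are in general only ind-definable, and the Fact here is stated for arbitrary o-minimal structures, not just expansions of real closed fields). Moreover the compact part of $R$ and a lifted $K_S$ need not be compatible (a product of two definably compact subgroups need not be a subgroup), and the torsion-free pieces $N(R)$ and $L_S$ do not obviously organise into a single subgroup $\bar L$ with $\bar K\cap\bar L=\{\bar 1\}$: this assembly \emph{is} the content of Conversano's theorem, and nothing in the outline supplies it. Second, conjugacy: there is no o-minimal Cartan--Iwasawa fixed-point theorem available to invoke, and the uniqueness in \cite{aC14} is not obtained that way; for the semialgebraic factors one can instead transfer the classical conjugacy of maximal compact subgroups from $\mathbb{R}$ by Tarski's principle after applying \cite{PPS00}, and the general case again rests on Conversano's structural analysis. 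Your preliminary reduction --- that $N(\bar G)$ is trivial, via multiplicativity of the o-minimal Euler characteristic and Strzebonski's characterisation of torsion-free groups in \cite{aS94} --- is correct, but everything after it is a plausible shape of a proof rather than a proof; since the paper itself uses the statement as a black box, the right move is simply to cite \cite[Thm.\,1.2]{aC14} rather than to half-reprove it.
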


Next, we see that $\Ext_d(\bar K,Z)$ determines $\Ext_d(G,Z)$.
	
\begin{cor}Let $G$ be a definably connected definable group in an o-minimal expansion of a real closed field. Let $\bar K:=K/N(G)$ be a maximal definably (connected) compact definable subgroup of $\bar G:=G/N(G)$ and let $\pi:G\to\bar G$ be the natural projection. Then, the induced map $ \pi_d^*: H_d^2(\bar G,Z) \longrightarrow H_d^2(G,Z)$ is an isomorphism. In particular, the map
\[
\Ext_d(G,Z) \to \Ext_d(\bar K,Z), \ [f] \mapsto [\bar f|_{\bar K\times \bar K}],
\]
for some $\bar f\in Z_d^2(\bar G,Z)$ satisfying $[f]=[\bar f\circ(\pi\times\pi)]$, is an isomorphism. 
\end{cor}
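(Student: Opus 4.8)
The plan is to reduce the whole statement to showing that the restriction map $r\colon\Ext_d(\bar G,Z)\to\Ext_d(\bar K,Z)$, $[\bar f]\mapsto[\bar f|_{\bar K\times\bar K}]$, is an isomorphism. First I would treat the inflation map $\pi_d^*$. Since $N(G)$ is torsion-free it is uniquely divisible, hence definably connected, so Theorem \ref{T:Ext-Sequence} applied to $1\to N(G)\to G\stackrel{\pi}{\to}\bar G\to 1$ gives the exact sequence
\[
0\longrightarrow \Ext_d(\bar G,Z)\stackrel{\pi_d^*}{\longrightarrow}\Ext_d(G,Z)\stackrel{\iota_d^*}{\longrightarrow}\Ext_d(N(G),Z).
\]
By Proposition \ref{P:Torsion-free} we have $\Ext(N(G),Z)=0$, and hence $\Ext_d(N(G),Z)=0$ by the embedding of Corollary \ref{C:Ext-Inclusion}; thus $\iota_d^*$ has trivial target and $\pi_d^*$ is an isomorphism. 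Consequently the map in the statement is $r\circ(\pi_d^*)\inv$, so it is an isomorphism if and only if $r$ is.

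For surjectivity of $r$, I would invoke Fact \ref{fact:conversano} to write $\bar G=\bar K\cdot\bar L$ with $\bar K\cap\bar L=\{\bar 1\}$, where $\bar K$ is definably connected and $\bar L$ is a definably connected torsion-free subgroup; as torsion-free definable groups are definably contractible, $\bar L$ is definably contractible. Proposition \ref{P:Def_Contractible} applied to the triple $(\bar G,\bar K,\bar L)$ then yields that $r$ is surjective.

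The main work is injectivity of $r$, which I expect to be the principal obstacle. Given $\bar f\in Z_d^2(\bar G,Z)$ with $\bar f|_{\bar K\times\bar K}$ a definable coboundary, I would form the connected component $E=(\bar G\times_{\bar f}Z)^\circ$ and set $Z_1=Z\cap E$, so that $1\to Z_1\to E\stackrel{\pi}{\to}\bar G\to 1$ is a definable finite central extension with $E$ definably connected. The splitting of $\bar f$ over $\bar K$ produces a definably connected subgroup $K_1\subseteq E$ with $\pi|_{K_1}\colon K_1\to\bar K$ a definable isomorphism. I would then run the universal-cover machinery of Facts \ref{F:Universal0} and \ref{F:Universal}: since $\pi$ is onto with finite kernel, $\widetilde\pi\colon\widetilde E\to\widetilde{\bar G}$ is an isomorphism, which identifies $Z_1$ with $\pi_1(\bar G)/\pi_*(\pi_1(E))$. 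On the other hand, the proof of Proposition \ref{P:Def_Contractible} shows that the inclusion induces an isomorphism $\pi_1(\bar K)\stackrel{\cong}{\to}\pi_1(\bar G)$, and since $\pi|_{K_1}$ is an isomorphism the composite $\pi_1(K_1)\to\pi_1(E)\stackrel{\pi_*}{\to}\pi_1(\bar G)$ equals this isomorphism. Hence $\pi_*(\pi_1(E))=\pi_1(\bar G)$, forcing $Z_1=\{0\}$. Then $\pi|_E\colon E\to\bar G$ is an isomorphism and $\bar G\times_{\bar f}Z=E\cdot Z\cong\bar G\times Z$ splits definably, so $[\bar f]=[0]$ and $r$ is injective.

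The delicate point is the identification $Z_1\cong\pi_1(\bar G)/\pi_*(\pi_1(E))$ together with the verification that the lift $K_1$ coming from the splitting over $\bar K$ genuinely realizes the inclusion-induced map $\iota_*$ at the level of fundamental groups. This requires tracking, through Fact \ref{F:Universal0}, how $\widetilde\pi$ carries $\ker(p_E)$ onto $\pi_*(\pi_1(E))$ inside $\ker(p_{\bar G})$; since these kernels are countable and thus central, the bookkeeping is routine but is where the argument must be carried out carefully.
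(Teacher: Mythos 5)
Your proposal is correct, and its first two steps are exactly the paper's: the inflation $\pi_d^*$ is an isomorphism by Theorem \ref{T:Ext-Sequence} applied to $1\to N(G)\to G\to \bar G\to 1$ together with Proposition \ref{P:Torsion-free}, and surjectivity of the restriction $r$ follows from Fact \ref{fact:conversano} and Proposition \ref{P:Def_Contractible}, using that definable torsion-free groups are definably contractible. Where you genuinely diverge is injectivity. You run the covering machinery: form $E=(\bar G\times_{\bar f}Z)^\circ$, $Z_1=Z\cap E$, lift $\bar K$ to $K_1=\{(x,h(x)):x\in\bar K\}$ via the definable splitting $\bar f|_{\bar K\times\bar K}=dh$, and use Fact \ref{F:Universal0}(2) (note $\pi(E)=\bar G$ by Fact \ref{F:Connected}, so the hypothesis holds) to identify $Z_1\cong \pi_1(\bar G)/\pi_*(\pi_1(E))$; since $(\pi|_{K_1})_*$ is an isomorphism (definable homomorphisms are continuous) and $\iota_*\colon\pi_1(\bar K)\to\pi_1(\bar G)$ is an isomorphism by the homeomorphism $\bar K\times\bar L\to\bar G$, the map $\pi_*$ is onto and $Z_1=0$; then $\bar f$ splits abstractly and Lemma \ref{L:h_def} upgrades this to $[\bar f]=[0]$ in $\Ext_d$. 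This works, including the bookkeeping you flagged: the identification of $Z_1$ follows from the commutative diagram of Fact \ref{F:Universal0}, where $\widetilde\pi$ carries $\ker(p_E)$ onto the copy of $\pi_*(\pi_1(E))$ inside $\ker(p_{\bar G})$. The paper avoids all of this with a short maximality argument: after normalizing so that $f|_{\bar K\times\bar K}=0$, the preimage $\pi^{-1}(\bar K)=\bar K\times Z_1$ is a maximal definably compact subgroup of $\widehat{G}^\circ$ (any definably compact subgroup containing it projects into $\bar K$ by maximality downstairs), and since maximal definably compact subgroups are definably connected by Fact \ref{fact:conversano}, the finite factor $Z_1$ must be trivial. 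Your route makes the topological mechanism explicit and mirrors the proof of Proposition \ref{P:Def_Contractible}, at the cost of invoking universal covers and $\pi_1$ functoriality; the paper's route is more elementary and sidesteps precisely the delicate identification you singled out as the main obstacle.
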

\begin{proof}
Theorem \ref{T:Ext-Sequence} applied to the exact sequence $1\to N(G)\to G\stackrel{\pi}{\to} \bar G\to 1$ yields
\[
0\longrightarrow H_d^2(\bar G,Z) \stackrel{\pi_d^*}{\longrightarrow} H_d^2(G,Z) \longrightarrow H_d^2(N(G),Z)\stackrel{\ref{P:Torsion-free}}{=}0.
\]
So, the map $\pi_d^*$ is an isomorphism. For the second part of the statement, it suffices to prove that the map $\Ext_d(\bar G,Z) \to \Ext_d(\bar K,Z)$, defined by $[\bar f] \mapsto [\bar f|_{\bar K\times \bar K}]$, is an isomorphism. 

In an o-minimal expansion $M$ of a real closed field, a definable torsion-free group is definably homeomorphic to $M^k$ by \cite[Cor.\,5.7]{PS05} and so it is definably contractible. Thus, Proposition \ref{P:Def_Contractible} together with the previous Fact \ref{fact:conversano}  yields that the map is surjective. Hence, it only remains to see that it is injective. To ease notation we may assume that $N(G)$ is trivial, so $\bar G=G$ and $\bar K=K$.

Let $f\in Z_d^2(G,Z)$ be such that $[f|_{K\times K}]=[0]$. In particular, there is some definable $h_K:K\to Z$ with $h_K(1)=0$ such that $f|_{K\times K}=dh_K$. Denote by $r_K:G\to K$ the map defined by $r_K(x)=x$ for $x\in K$ and $r_K(x)=1$ otherwise. 
So, after replacing $f$ by $f-dh$ where $h=h_K\circ r_K$, we may further assume that $f|_{K\times K}=0$. 

Set now $\widehat{G}=G\times_f Z$ and $Z_1=Z\cap \widehat{G}^\circ$. Note that  $K\times_f Z=K\times Z$, so $K\times\{0\}$ is a definably connected subgroup of $\widehat G^\circ$. Thus, we can choose a definable section $s:G\to \widehat{G}^\circ$ such that $s(x)=(x,0)$ for every $x\in K$. Since $s$ is 		also a section $s:G\to \widehat{G}$, the definable $2$-cocycle $f_s$
determined by $s$ is equivalent to $f$. Moreover, note that $f_s|_{K\times K}=0$.
So, after replacing $f$ by $f_s$, we may assume that $\im(f)\subset Z_1$ and $\widehat
		G^\circ=G\times_f Z_1$ with $f|_{K\times K}=0$.
		
Consider the associated central extension 
		\[
		0\longrightarrow Z_1 \longrightarrow \widehat G^\circ
		\stackrel{\pi}{\longrightarrow} G \longrightarrow 1,
		\] 
		which is definable by assumption. We claim that $\pi\inv (K)=K\times Z_1$ is a maximal definably
(connected) compact subgroup of $\widehat G^\circ$. For otherwise, let $\widehat{K}_1$ be
a definably compact subgroup of $\widehat{G}$ containing $\pi\inv(K)$.
So, we have $K=\pi(\pi\inv (K))\subset\pi(\widehat K_1)$ and therefore $K=\pi(\widehat{K}_1)$, as $\pi(\widehat K_1)$ is definably (connected) compact subgroup of $G$. Thus $\pi\inv(K)=\widehat{K}_1$ by maximality. Since $\pi\inv(K)=K\times Z_1$, we deduce that $Z_1$ is trivial and
therefore $\widehat{G}\cong G\times Z$, as required.		\end{proof}

	\begin{prop}\label{prop:CompactvsAbelianDef}Let $K$ be a definably connected
		definably compact definable group in an o-minimal expansion of a real closed field. Let $A$ be an
		abelian definable subgroup of $K$ of maximal dimension among dimensions of abelian definable subgroups of $K$. Then, the homo\-morphism
		\[
		\Ext_d(K,Z) \to \Ext_d(A,Z), \ [f] \mapsto [f|_{A\times A}]
		\]
		is injective.
	\end{prop}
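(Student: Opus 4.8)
The plan is to pass to the universal o-minimal coverings and reduce the injectivity to a single statement about fundamental groups, namely the surjectivity of the map $\iota_*:\pi_1(A)\to\pi_1(K)$ induced by the inclusion $\iota:A\hookrightarrow K$. First I would make two harmless reductions. Since $A$ and $A^\circ$ have the same dimension and the restriction factors as $\Ext_d(K,Z)\to\Ext_d(A,Z)\to\Ext_d(A^\circ,Z)$, it suffices to prove injectivity of $[f]\mapsto[f|_{A^\circ\times A^\circ}]$, so I assume $A$ is definably connected. As a definable subgroup of the definably compact group $K$, the group $A$ is itself definably compact, hence a definably connected definably compact abelian group, so $\pi_1(A)\cong\Z^{\dim A}$. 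Now fix $[f]\in\Ext_d(K,Z)$ with $[f|_{A\times A}]=[0]$, put $\widehat K=K\times_f Z$, let $\widehat K^\circ$ be its connected component and $Z_1=Z\cap\widehat K^\circ$. As in the previous results, $[f]=[0]$ holds precisely when $Z_1$ is trivial, since then $\pi\colon\widehat K^\circ\to K$ is a definable isomorphism providing a definable homomorphic section; the same remark applied to $A$ shows that $[f|_{A\times A}]=[0]$ precisely when $Z_1^A:=Z\cap\widehat A^\circ$ is trivial, where $\widehat A=\pi\inv(A)=A\times_{f|_{A\times A}}Z$.

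Next I would translate both conditions through the universal covering. By Fact \ref{F:Universal} there is a locally definable surjection $\gamma:\widetilde K\to\widehat K^\circ$ with $\pi\circ\gamma=p_K$; setting $\Gamma=\ker\gamma\subseteq\ker p_K\cong\pi_1(K)$ one gets $Z_1\cong\pi_1(K)/\Gamma$. Applying Fact \ref{F:Universal0} to $\iota$ yields $\widetilde\iota:\widetilde A\to\widetilde K$ together with $\iota_*:\pi_1(A)\to\pi_1(K)$, and a direct diagram chase shows that $\gamma\circ\widetilde\iota:\widetilde A\to\widehat K^\circ$ has image $\widehat A^\circ$ and kernel $\widetilde\iota\inv(\Gamma)\subseteq\ker p_A$. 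Consequently $Z_1^A\cong\pi_1(A)/\iota_*\inv(\Gamma)$, so the hypothesis $[f|_{A\times A}]=[0]$ is equivalent to $\iota_*(\pi_1(A))\subseteq\Gamma$, while the desired conclusion $[f]=[0]$ is equivalent to $\Gamma=\pi_1(K)$. Thus the whole statement reduces to showing $\iota_*(\pi_1(A))=\pi_1(K)$: if $\iota_*$ is surjective, then $\pi_1(K)=\iota_*(\pi_1(A))\subseteq\Gamma\subseteq\pi_1(K)$ forces $\Gamma=\pi_1(K)$, hence $Z_1=0$ and $[f]=[0]$.

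The main obstacle is therefore the surjectivity of $\iota_*:\pi_1(A)\to\pi_1(K)$, which is the o-minimal incarnation of the classical fact that the fundamental group of a maximal torus surjects onto that of a compact connected Lie group. I would establish it from the maximal-torus theory for definably compact groups: using that a maximal-dimensional abelian subgroup $A$ has the property that every element of $K$ is conjugate into $A$, and that the homogeneous space $K/A$ is definably simply connected, the long exact homotopy sequence of the definable fibration $A\to K\to K/A$ gives $\pi_1(A)\to\pi_1(K)\to\pi_1(K/A)=0$, whence surjectivity. Verifying the simple connectedness of $K/A$ (equivalently, that every definable loop in $K$ is definably homotopic into $A$) is the delicate point, and is where the definable compactness of $K$ and the maximality of $\dim A$ are genuinely used; the required conjugacy and lifting facts are supplied by the structure theory of definably compact o-minimal groups.
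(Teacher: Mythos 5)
Your covering-theoretic bookkeeping is correct, and in fact it mirrors machinery the paper itself uses in the proof of Proposition \ref{P:Def_Contractible}: the identifications $[f]=[0]\Leftrightarrow Z_1=\{0\}$, $Z_1\cong \ker(p_K)/\Gamma$ for $\Gamma=\ker(\gamma)$, and $Z_1^A\cong \pi_1(A)/\iota_*\inv(\Gamma)$ are all sound (the first via Lemma \ref{L:h_def} and Fact \ref{F:Connected}, the others via Facts \ref{F:Universal0} and \ref{F:Universal}). So you have correctly reduced the proposition to the single claim that $\iota_*:\pi_1(A)\to\pi_1(K)$ is surjective, the o-minimal analogue of the classical fact that the fundamental group of a maximal torus surjects onto that of a compact connected Lie group.

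The gap is that this key claim is asserted, not proved, and it is precisely the hard content. Your proposed derivation needs two inputs: a long exact homotopy sequence for the definable fibration $A\to K\to K/A$ (this part is repairable, via o-minimal Hardt triviality plus homogeneity to get local definable triviality of $K\to K/A$, and the o-minimal homotopy theory of \cite{eB09,BO10loc}), and the definable simple connectedness of $K/A$, which is nowhere established in the o-minimal literature in the generality needed -- even for compact Lie groups, $\pi_1(G/T)=0$ requires genuine work (even-dimensional cell decompositions, Morse theory, or Weyl's theorem), and no transfer to arbitrary o-minimal expansions of real closed fields is available off the shelf. Worse, the claim is essentially equivalent to the proposition itself: applying the proposition to the finite central extensions $K\times_f Z$ as $Z$ varies recovers exactly the surjectivity of $\iota_*$ (since $\pi_1(K)$ is a finitely generated abelian group, a proper subgroup containing $\iota_*(\pi_1(A))$ would be detected by some finite quotient), so deferring to it is close to circular. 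The paper avoids homotopy theory entirely and argues algebraically: by \cite{BJO14} (Fact 37 and Lemma 38), an abelian definable subgroup of maximal dimension in a definably compact group is definably connected and maximal abelian; lifting to $\widehat K^\circ=K\times_f Z_1$, one checks that $\pi\inv(A)=A\times Z_1$ is again maximal abelian of maximal dimension in $\widehat K^\circ$, hence definably connected, which forces $Z_1=\{0\}$ and $[f]=[0]$. Your argument becomes a complete (and genuinely different) proof only once you supply a proof of $\pi_1(K/A)=0$ or of the surjectivity of $\iota_*$; as it stands, the Cartan-subgroup connectedness result is what actually closes the argument.
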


	\begin{proof} As in the proof of the previous proposition, given $f\in Z_d(K,Z)$ with $[f|_{A\times A}]=[0]$ and setting $\widehat{K}=K\times_f Z$ and $Z_1=Z\cap
		\widehat{K}^\circ$, we may assume after some reductions that $\widehat K^\circ=K\times_f Z_1$ and that we have a central extension $0\rightarrow Z_1 \rightarrow \widehat K^\circ
		\stackrel{\pi}{\rightarrow} A \rightarrow 1$,
		which is definable and $\pi\inv(A)=A\times Z_1$. Moreover, we also have that $A$ is definably connected and a maximal abelian subgroup, by \cite[Fact\,37 and Lem.\,38]{BJO14}.
		
		Now, if $A_1$ is an abelian definable subgroup of $\widehat K^\circ$ containing $\pi\inv(A)$ then $\pi(A_1)$ is clearly an abelian definable group containing $A$, so $\pi(A_1)=A$ by maximality. Therefore, we deduce that $A_1=\pi\inv(A)$ and $\pi\inv(A)$ is a maximal abelian definable subgroup of $\widehat K^\circ$. Therefore, since $\pi\inv(A)$ has maximal dimension among dimensions of abelian definable subgroups of $K$,  it is definably connected by \cite[Fact\,37 and Lem.\,38]{BJO14}. This implies that $Z_1$ is the trivial subgroup, and so $\widehat K=\widehat K^\circ\times Z \cong K \times Z$, which yields the result. 
	\end{proof}

Altogether, the last two results and Theorem \ref{T:Sol} yield:	

\begin{cor}\label{C:FiniteExt}
Let $G$ be a definably connected definable group in an o-minimal expansion of a real closed field. Then, the group $\Ext_d(G,Z)$ embeds into $Z^k$, where $k$ is bounded above by the dimension of $G/N(G)$. In particular, there are finitely many non equivalent definable central extensions of $G$ by $Z$.\qed
\end{cor}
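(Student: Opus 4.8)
The plan is to deduce Corollary \ref{C:FiniteExt} by chaining together the structural results that have just been established, reducing the cohomology of an arbitrary $G$ step by step to the cohomology of a maximal abelian subgroup of a compact piece. First I would invoke the preceding corollary to replace $\Ext_d(G,Z)$ by $\Ext_d(\bar K,Z)$, where $\bar K$ is a maximal definably connected definably compact subgroup of $\bar G=G/N(G)$; that corollary establishes this as an isomorphism, so no information is lost and the dimension bound $\dim(\bar K)\le\dim(G/N(G))$ is already visible at this stage. This handles the passage from the general group to a definably compact one.

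Next I would apply Proposition \ref{prop:CompactvsAbelianDef} to the definably connected definably compact group $\bar K$, choosing an abelian definable subgroup $A\le\bar K$ of maximal dimension among abelian definable subgroups. That proposition gives an injective homomorphism $\Ext_d(\bar K,Z)\hookrightarrow\Ext_d(A,Z)$ via restriction $[f]\mapsto[f|_{A\times A}]$. Composing with the isomorphism from the previous step yields an embedding $\Ext_d(G,Z)\hookrightarrow\Ext_d(A,Z)$. Since $A$ is a definably connected definably compact abelian group, it is amenable to the explicit computation carried out in the proof of Theorem \ref{T:Sol}: there $\Ext(A,Z)\cong Z^{\dim(A)}$, and by Theorem \ref{T:Sol} (or directly by \cite[Thm.\,2.9]{BPP10} together with Lemma \ref{L:Natural_Int}) the inclusion $\Ext_d(A,Z)\hookrightarrow\Ext(A,Z)$ is an isomorphism, so $\Ext_d(A,Z)\cong Z^{\dim(A)}$.

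Setting $k=\dim(A)$ then gives an embedding $\Ext_d(G,Z)\hookrightarrow Z^k$, and the bound $k=\dim(A)\le\dim(\bar K)\le\dim(G/N(G))$ is immediate since $A$ is a subgroup of $\bar K$ and $\bar K$ is a subgroup of $\bar G=G/N(G)$. The ``in particular'' clause is then a formal consequence: by Corollary \ref{C:Ext-Inclusion} together with Lemma \ref{L:Natural_Int}, the elements of $\Ext_d(G,Z)$ are in one-to-one correspondence with equivalence classes of naturally interpretable (equivalently, definable) central extensions of $G$ by $Z$, and $Z^k$ is finite, so there are only finitely many such classes.

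I do not expect any genuine obstacle here, since all the substantive work has been done in the earlier results; the only mild point requiring care is bookkeeping the chain of maps to confirm that the overall map is an \emph{embedding} (injectivity is preserved under the composite because the first map is an isomorphism and the second is injective) and that the dimension inequalities line up correctly through the two subgroup inclusions $A\le\bar K\le\bar G$. The finiteness of $Z$ is what ultimately forces the finiteness of the number of extensions, so the statement is really an assertion about the \emph{source} of a natural embedding rather than about any new cohomological phenomenon.
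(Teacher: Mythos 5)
Your proposal is correct and follows essentially the same route as the paper, which deduces the corollary precisely by combining the preceding corollary (reduction from $G$ to the maximal definably compact subgroup $\bar K$ of $G/N(G)$), Proposition \ref{prop:CompactvsAbelianDef} (the restriction embedding into $\Ext_d(A,Z)$ for an abelian definable subgroup $A$ of maximal dimension), and Theorem \ref{T:Sol} (giving $\Ext_d(A,Z)\cong Z^{\dim(A)}$, since $A$ is definably compact so $N(A)$ is trivial). Your bookkeeping of injectivity along the composite and of the inequalities $\dim(A)\le\dim(\bar K)\le\dim(G/N(G))$, as well as the appeal to Lemma \ref{L:Natural_Int} and Corollary \ref{C:Ext-Inclusion} for the finiteness conclusion, is exactly what the paper's ``altogether'' proof leaves implicit.
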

	
\section{A reduction to simple groups}\label{s:Reduction}

In this final section we prove that in the context of o-minimal expansions of real closed fields, the conjecture holds for arbitrary definable groups if it does for (almost) definably simple groups. Recall that all finite abelian  groups can be assumed to be $\emptyset$-definable. 

The following lemmata will be necessary to prove this reduction.
	
\begin{lemma}\label{L:UpDown} Let $0\to  F\to G_0 \to  G\to 1$ be an exact sequence of groups definable in an o-minimal expansion of a real closed field, where $G_0$ and $G$ are definably connected definable groups, and $F$ is a finite central subgroup of $G_0$.
\begin{enumerate}
\item If $Z$ is a finite abelian group and $\Ext_d(G_0,Z)\hookrightarrow \Ext(G_0,Z)$ is an isomorphism, then $\Ext_d(G,Z)\hookrightarrow \Ext(G,Z)$ is an isomorphism. 
			
\item If $\Ext_d(G,Z)\hookrightarrow \Ext(G,Z)$ is an isomorphism for every finite abelian group $Z$, then $\Ext_d(G_0,Z)\hookrightarrow \Ext(G_0,Z)$ is also an isomorphism for every finite abelian group $Z$.	
			
\end{enumerate}		
\end{lemma}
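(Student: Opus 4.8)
The plan is to prove both directions by exploiting the finite central extension $0\to F\to G_0\to G\to 1$ and applying the machinery of Lemma~\ref{L:Inf-Ret} together with Corollary~\ref{C:Key_Diagram}, comparing the definable and abstract cohomology via the inclusion maps. The key observation is that such an exact sequence is classified by a cocycle $\omega\in Z^2(G,F)$, and by Theorem~\ref{T:Sol} (applied to the solvable radical) or by the definable covering theory one can arrange $\omega$ to be definable, i.e.\ $\omega\in Z_d^2(G,F)$; this is what lets the inflation-restriction sequence of Lemma~\ref{L:Inf-Ret} apply simultaneously in the abstract and definable worlds. So for every finite abelian $Z$ we obtain a commuting diagram with exact rows
\[
\begin{tikzcd}
0 \arrow{r} & \mathrm{Hom}(F,Z) \arrow{r}{\omega_*} & \Ext(G,Z) \arrow{r}{\beta^*} & \Ext(G_0,Z) \\
0 \arrow{r} & \mathrm{Hom}(F,Z) \arrow{r}{\omega_*}\arrow[equal]{u} & \Ext_d(G,Z) \arrow{r}{\beta_d^*}\arrow[hook]{u} & \Ext_d(G_0,Z) \arrow[hook]{u}
\end{tikzcd}
\]
where the leftmost vertical map is the identity on $\mathrm{Hom}(F,Z)$ (since $F$ is finite, every homomorphism into $Z$ is automatically definable), and the other two verticals are the canonical inclusions of Corollary~\ref{C:Ext-Inclusion}.

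For part~(1), I would assume the middle inclusion $\Ext_d(G_0,Z)\hookrightarrow\Ext(G_0,Z)$ is an isomorphism and deduce that $\Ext_d(G,Z)\hookrightarrow\Ext(G,Z)$ is onto (injectivity is already known from Corollary~\ref{C:Ext-Inclusion}). Given an abstract class $[f]\in\Ext(G,Z)$, push it through $\beta^*$ to get a class in $\Ext(G_0,Z)$; by the hypothesis this comes from a definable class $[\bar f]\in\Ext_d(G_0,Z)$. A diagram chase, using exactness of both rows and the equality of the two copies of $\mathrm{Hom}(F,Z)$, then shows that $[f]$ differs from a definable class by an element of $\im(\omega_*)$, which is itself definable; hence $[f]\in\im(\Ext_d(G,Z)\to\Ext(G,Z))$. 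The only subtlety is that $\beta^*$ need not be injective, so I must correct by an element of $\mathrm{Hom}(F,Z)$ — but since this correction term is the same definable subgroup in both rows, the chase closes.

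For part~(2), I would assume $\Ext_d(G,Z)\hookrightarrow\Ext(G,Z)$ is an isomorphism \emph{for every} $Z$ and prove surjectivity of $\Ext_d(G_0,Z)\hookrightarrow\Ext(G_0,Z)$, again for each fixed $Z$. Here the diagram chase runs in the opposite direction: given $[f]\in\Ext(G_0,Z)$, I want to realize it as $\beta_d^*$ of a definable class. The obstacle is that $\beta^*$ is not surjective, so not every class on $G_0$ is inflated from $G$; I would need to understand the cokernel, which by the higher inflation-restriction sequence of Hochschild-Serre injects into a term built from $H^1(G,\mathrm{Hom}(F,Z))$ or $\mathrm{Hom}(\wedge^2 F,Z)$-type data. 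The point is that all such terms involve only the finite group $F$ and so are automatically definable; the hypothesis for varying $Z$ is precisely what lets me control these extra terms uniformly.

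\textbf{The main obstacle} I anticipate is justifying that the classifying cocycle $\omega$ can be taken definable and, more delicately, extending the two-column three-term exact sequence of Lemma~\ref{L:Inf-Ret} one step further to the right so that the cokernel of $\beta^*$ (resp.\ $\beta_d^*$) is captured — Lemma~\ref{L:Inf-Ret} as stated only gives exactness through $\Ext_\diamond(B,Z)$, with no information past the inflation map. For part~(2) this extra term is exactly where surjectivity onto $\Ext(G_0,Z)$ could fail, so I would either need a five-term version of the inflation-restriction sequence valid in both the abstract and definable settings, or an argument that the relevant obstruction group depends only on the finite data $(F,Z)$ and therefore that abstract and definable obstructions coincide. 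Reconciling the definable and abstract five-term sequences into a single commuting ladder, and checking the connecting maps agree, is the technically demanding step.
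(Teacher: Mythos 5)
Your setup (the two-row ladder from Lemma \ref{L:Inf-Ret}, with the identity on $\mathrm{Hom}(F,Z)$ and a definable classifying cocycle $\omega$, which indeed exists by Edmundo's definable section theorem) matches the paper, but the chase you run on it does not close, and you have correctly located why: Lemma \ref{L:Inf-Ret} gives no exactness at the third term. Concretely, in part (1), from $\beta^*([f])=[g]$ with $[g]\in \Ext_d(G_0,Z)$ you need $[g]\in\im(\beta_d^*)$ before you can correct $[f]$ by an element of $\im(\omega_*)$; nothing in the three-term sequences gives $\im(\beta^*)\cap \Ext_d(G_0,Z)\subseteq \im(\beta_d^*)$, and this inclusion is essentially equivalent to what you are trying to prove. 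The paper avoids extending the sequence altogether: writing $f\circ(\beta\times\beta)-g=dh$ for some (a priori non-definable) $h:G_0\to Z$, the map $\gamma:G_0\times_g Z\to G\times_f Z$, $(x,z)\mapsto(\beta(x),z+h(x))$, is a surjective homomorphism with finite kernel $\Delta_1=\{(x,-h(x)) : x\in F\}$; since $G_0\times_g Z$ is definable and $\Delta_1$ is finite, $G\times_f Z\cong (G_0\times_g Z)/\Delta_1$ is definable together with the extension maps, and Lemma \ref{L:Natural_Int} gives $[f]\in\Ext_d(G,Z)$. No five-term sequence is needed; the finite quotient construction is the missing idea.

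For part (2) the gap is more serious: your plan routes through the cokernel of inflation and higher Hochschild--Serre terms ($H^1(G,\mathrm{Hom}(F,Z))$, transgressions into $H^3$), but no definable analogues of these terms are developed anywhere in the paper, and reconciling abstract and definable seven-term sequences is a substantial project you do not carry out, so the argument as proposed is incomplete. You did sense that the quantifier ``for every finite abelian $Z$'' must be the lever, but the actual mechanism is a change of coefficients, not control of obstruction groups: given a central extension $0\to Z\to \widehat{G_0}\stackrel{\pi}{\to} G_0\to 1$, first reduce to the case where $\widehat{G_0}$ has no proper finite-index subgroup (passing to a suitable finite-index normal subgroup and replacing $Z$ by $Z\cap N$, via a common section); then the conjugation action of $\widehat{G_0}$ on the finite normal subgroup $\pi^{-1}(F)$ is trivial, so $\pi^{-1}(F)$ is central, hence finite abelian, and $0\to\pi^{-1}(F)\to\widehat{G_0}\stackrel{\beta\circ\pi}{\to}G\to 1$ is a finite central extension of $G$. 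Applying the hypothesis with the coefficient group $\pi^{-1}(F)$ in place of $Z$ makes $\widehat{G_0}$ and $p:=\beta\circ\pi$ definable; definability of $\pi$ itself then follows by showing its graph $\Delta_2$ is a finite-index subgroup (at most $|F|$ cosets) without finite-index subgroups inside the definable fiber product $\widehat{G_0}\times_{p,\beta}G_0$, hence equals the definably connected component $(\widehat{G_0}\times_{p,\beta}G_0)^\circ$ and is definable. I recommend abandoning the spectral-sequence extension and adopting these two group-theoretic constructions, which is exactly where your ``main obstacle'' paragraph was pointing.
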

\begin{proof}
Let $\beta:G_0\rightarrow G$ denote the homomorphism of the statement and assume, as we may, that its kernel is $F$.
		
\noindent (1) Let $f:G\times G\rightarrow Z$ be an arbitrary $2$-cocycle. We need to prove that $f$ is equivalent to a definable $2$-cocycle, that is, the group extension given by $\widehat{G}:=G\times_{f} Z$ is naturally interpretable (see Lemma \ref{L:Natural_Int}). 

By Lemma \ref{L:Inf-Ret} we have a commuting diagram
		\[
		\begin{tikzcd}
			0 \arrow{r}  & \mathrm{Hom}(F,Z) \arrow{r} & \Ext(G,Z)      \arrow{r}{\beta^*}    &
			\Ext(G_0,Z) \\
			0 \arrow{r} & \mathrm{Hom}(F,Z) \arrow{r} \arrow[hookrightarrow]{u}{\cong} &
			\Ext_d(G,Z)   \arrow{r}{\beta_d^*} \arrow[hookrightarrow]{u}  &
			\Ext_d(G_0,Z)
			\arrow[hookrightarrow]{u}{\cong}
		\end{tikzcd}
		\]	
where both rows are exact. By assumption, there exists some definable $2$-cocycle $g:G_0\times G_0\rightarrow Z$ such that $[g]=\beta^*([f])=[f\circ (\beta \times \beta)]$, so $f\circ(\beta\times \beta)-g=dh$ for some $h:G_0\to Z$ with $h(1)=0$. Consider the group $\widehat{G_0}:=G_0\times_{g} Z$. The canonical  homomorphism $\gamma:\widehat{G_0}\to \widehat G$, given by $(x,z)\mapsto (\beta(x),z+h(x))$, induces an exact sequence
\[
0\longrightarrow \Delta_1 \longrightarrow \widehat{G_0} \longrightarrow \widehat{G} \longrightarrow 1
\]
where $\Delta_1=\left\{ (x,z)\in F\times Z\, :\, h(x)=-z \right\}$. 
As $\widehat{G_0}$ is definable and $\Delta_1$ is finite, we deduce that $ \widehat{G_0}/\Delta_1 $ is definable as well, and isomorphic to $\widehat{G}$. This yields that 
\[
\begin{tikzcd}
	0 \arrow{r}  & Z \arrow{r}{\mu} \arrow[equal]{d} & \widehat{G_0}/\Delta_1      \arrow{r}{\pi} \arrow{d}{\bar \gamma}   &
	G \arrow{r} 	\arrow[equal]{d}  & 1 \\
	0 \arrow{r} & Z \arrow{r}  &
	\widehat{G}   \arrow{r}   &
	G \arrow{r} & 1
\end{tikzcd}
\]	
is a commutative diagram, with $\mu(z)=[(1,z)]$, $\pi([(x,z)])=\beta(x)$ and $\bar\gamma$ the isomorphism map induced by $\gamma$. Hence, by Lemma \ref{L:Natural_Int} the cocycle $f$ is equivalent to a definable one, as required.
		
\noindent (2) Let $Z$ be a finite abelian group, and consider an arbitrary abstract finite central extension
\[
0\longrightarrow Z \longrightarrow \widehat{G_0} \stackrel{\pi}{\longrightarrow} G_0 \longrightarrow 1.
\]  
We prove that this extension is naturally interpretable.

\begin{claim*}
We can assume that $\widehat{G_0}$ does not have subgroups of finite index. 
\end{claim*}
\begin{claimproof*} Let $N$ be a finite index normal subgroup of $\widehat{G_0}$ without subgroups of finite index, which exists by Fact \ref{F:Connected}. Let $Z_0:=Z\cap N$, and note that a section $s$ of $G_0\to N$ is also a section of $G_0\rightarrow \widehat{G_0}$. Thus, for the $2$-cocycle $f_s:G_0\times G_0\rightarrow Z_0$ determined by $s$ we have that $\widehat{G_0}$ is equivalent to $G_0\times_{f_s} Z$. Hence, it is enough to show that $f_s$ is equivalent to a definable $2$-cocycle.
\end{claimproof*}	
		
Since $\pi^{-1}(F)$ is normal in $\widehat{G_0}$, we have that $\widehat{G_0}$ acts on $\pi^{-1}(F)$ by conjugation. Since $\widehat{G_0}$ has no proper finite-index subgroup by the Claim, this yields that $\pi^{-1}(F)$ is central in $\widehat{G_0}$. Therefore, the following is also a finite central exact sequence
\[
0\longrightarrow \pi^{-1}(F) \longrightarrow \widehat{G_0} \stackrel{ \beta\circ\pi}{\longrightarrow} G \rightarrow 1.
\]  
Since $\Ext_d(G,\pi^{-1}(F))\hookrightarrow \Ext(G,\pi^{-1}(F))$ is an isomorphism by assumption, we can assume that $\widehat{G_0}$ and $p:=\beta\circ \pi$ are definable. Hence, it remains to prove that $\pi$ is definable. 
	
Consider the definable subgroup 
\[
\widehat{G_0}\times_{p,\beta} G_0:=\left\{(x,y)\in \widehat{G_0}\times G_0 \, : \,p(x)=\beta(y)\right\}
\]
of $\widehat{G_0}\times G_0$ and its subgroup 
\[
\Delta_2:=\left\{(x,\pi(x)) \, :\,  x\in \widehat{G_0}\right\}.
\]
Note that the map $\widehat{G_0}\rightarrow \Delta_2$ given by $x\mapsto (x,\pi(x))$ is an isomorphism and therefore $\Delta_2$ does not have subgroups of finite index, as neither does $\widehat{G_0}$. Thus, $\Delta_2$ is contained in $(\widehat{G_0}\times_{p,\beta} G_0)^\circ$. Moreover, it has finite index in $\widehat{G_0}\times_{p,\beta} G_0$. Indeed, for $(x,y)\in \widehat{G_0}\times_{p,\beta} G_0$ we have that $\pi(x)y\inv\in F$ and $F$ is finite, so the subgroup $\Delta_2$ has at most $|F|$ many cosets. It then follows that $\Delta_2 = (\widehat{G_0}\times_{p,\beta} G_0)^\circ$, so $\Delta_2$ is definable and so is $\pi$. 
\end{proof}

\begin{lemma}\label{L:AlmostProd}
Let $G$ be a definably connected definable group in an o-minimal expansion of a real closed field and let $Z$ be a finite abelian group. Suppose that $G$ is the almost direct product of two definably connected normal subgroups $A$ and $B$, {\it i.e.} $G=A\cdot B$ and $A\cap B$ is finite. Consider the homomorphism
		\[
		\gamma:\Ext(G,Z) \to \Ext(A,Z)\times \Ext(B,Z), \ [f]\mapsto \big([f|_{A\times
			A}] , [f|_{B\times B}]\big)
		\]
		and let $f\in Z^2(G,Z)$. If $\gamma([f])=([f_1],[f_2])$ for some $f_1\in Z_d^2(A,Z)$ and some $f_2\in 		Z_d^2(B,Z)$, then $[f]\in \Ext_d(G,Z)$.
	\end{lemma}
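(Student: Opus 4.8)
The plan is to rebuild the extension $\widehat G = G\times_f Z$ definably by gluing together the extensions of $A$ and $B$ that the hypothesis supplies, using definable connectedness to force the relevant commutativity. The starting observation is that $A$ and $B$ commute elementwise in $G$. Indeed, since $A$ and $B$ are normal, $[A,B]\subseteq A\cap B$, which is finite; as $G$ is definably connected, each $u\in A\cap B$ has only finitely many conjugates, so its centralizer has finite index and hence equals $G$ by Fact~\ref{F:Connected}. Thus $A\cap B\subseteq Z(G)$, and for fixed $b\in B$ the map $a\mapsto[a,b]$ is then a homomorphism $A\to A\cap B$ into a finite group, whose kernel has finite index; by Fact~\ref{F:Connected} it is trivial, so $[A,B]=1$.

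Next I would lift this commutativity to $\widehat G$. Writing $\pi\colon\widehat G\to G$ for the projection with central kernel $Z$, set $\widehat A=\pi^{-1}(A)=A\times_{f|_{A\times A}}Z$ and $\widehat B=\pi^{-1}(B)$. For $\hat a\in\widehat A$ and $\hat b\in\widehat B$ the commutator $[\hat a,\hat b]$ projects to $[a,b]=1$, hence lies in $Z$; it is independent of the chosen lifts (as $Z$ is central), and for fixed $a$ the map $b\mapsto[\hat a,\hat b]$ is a homomorphism $B\to Z$. Since $B$ is definably connected, $\mathrm{Hom}(B,Z)=0$ by Fact~\ref{F:Connected}, so this pairing vanishes and $\widehat A$ and $\widehat B$ commute in $\widehat G$.

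I would then assemble the definable model. By hypothesis $[f|_{A\times A}]=[f_1]$ and $[f|_{B\times B}]=[f_2]$ with $f_1,f_2$ definable, so Lemma~\ref{L:Natural_Int} yields equivalences of extensions $\phi_A\colon\widehat A_1:=A\times_{f_1}Z\to\widehat A$ and $\phi_B\colon\widehat B_1:=B\times_{f_2}Z\to\widehat B$, where $\widehat A_1,\widehat B_1$ are definable. Because $\widehat A$ and $\widehat B$ commute and generate $\widehat G$, the map $\Phi\colon\widehat A_1\times\widehat B_1\to\widehat G$, $(\hat a,\hat b)\mapsto\phi_A(\hat a)\phi_B(\hat b)$, is a surjective homomorphism, and its kernel injects into the finite group $\widehat A\cap\widehat B$, hence is finite. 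Therefore $\widehat G$ is isomorphic to the quotient $\widehat G_1:=(\widehat A_1\times\widehat B_1)/\ker\Phi$ of a definable group by a finite normal subgroup, which is again definable. Writing $\pi_A\colon\widehat A_1\to A$ and $\pi_B\colon\widehat B_1\to B$ for the projections, I would define $\pi_1([(\hat a,\hat b)])=\pi_A(\hat a)\pi_B(\hat b)$ and $\mu_1(z)=[((1,z),(1,0))]$, check that these are well defined and definable, and verify that the isomorphism $\bar\Phi\colon\widehat G_1\to\widehat G$ induced by $\Phi$ satisfies $\pi\circ\bar\Phi=\pi_1$ and $\bar\Phi\circ\mu_1=\mu$. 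Thus $\widehat G$ is equivalent, as an extension of $G$ by $Z$, to the definable extension $\widehat G_1$, so $\widehat G$ is naturally interpretable and $[f]\in\Ext_d(G,Z)$ by Lemma~\ref{L:Natural_Int} (Remark~\ref{R:Parameters} letting me disregard parameters).

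The main obstacle I anticipate is the second step: securing that the lifts $\widehat A$ and $\widehat B$ genuinely commute inside $\widehat G$, i.e.\ that the commutator pairing $A\times B\to Z$ vanishes, since without this $\Phi$ fails to be a homomorphism and the gluing breaks down. A secondary point requiring care is the final diagram chase, which must upgrade the abstract isomorphism $\bar\Phi$ to an equivalence of central extensions (identity on $G$ and on $Z$) rather than a mere group isomorphism.
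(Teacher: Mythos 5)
Your proof is correct, and its skeleton coincides with the paper's: both arguments realise $G\times_f Z$ as the quotient of the product of the two definable extensions over $A$ and $B$ by a finite ``antidiagonal'' kernel, with the key commutation supplied by Fact~\ref{F:Connected} (any homomorphism from a definably connected group into a finite group has finite-index kernel, hence is trivial), and both conclude via Lemma~\ref{L:Natural_Int}. Where you genuinely diverge is in handling the fact that $f|_{A\times A}$ and $f|_{B\times B}$ are only \emph{cohomologous} to $f_1$ and $f_2$, not equal to them. The paper spends two Claims normalising $f$ by explicit coboundary corrections supported on the finite set $A\cap B$ (first arranging $h_A|_{A\cap B}=h_B|_{A\cap B}$ via an indicator-function modification, then absorbing $dh_1+dh_2-dh_3$), after which $A_1=A\times_f Z$ and $B_1=B\times_f Z$ are literally definable subgroups of $G\times_f Z$ and the entire construction $(A_1^\circ\times B_1)/\Delta$ remains definable essentially without parameters. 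You instead transport the definable models into $\widehat G$ via the abstract equivalences $\phi_A,\phi_B$ --- which, note, come from the standard cocycle calculus (cohomologous cocycles give equivalent extensions), not from Lemma~\ref{L:Natural_Int}, a harmless misattribution --- and the price is that $\ker\Phi$ is defined through the non-definable functions $h_A,h_B$, so it is definable only over the finitely many parameters naming its elements; your appeal to Remark~\ref{R:Parameters} (i.e.\ \cite[Cor.\,1.2]{EJP11}) is therefore genuinely needed, and is available precisely because the lemma is confined to o-minimal expansions of real closed fields. Two smaller contrasts: you prove full commutation of $\pi^{-1}(A)$ with $\pi^{-1}(B)$, slightly stronger than the paper's commutation of $A_1^\circ$ with $B_1$, by the same two-step centrality argument (finite normal subgroup of a definably connected group is central, then the commutator pairing is a homomorphism into a finite group); and since you glue the full preimages rather than a connected component, surjectivity of $\Phi$ is immediate, whereas the paper implicitly uses $A_1=A_1^\circ\cdot(Z\cap A_1)$ with $Z\subseteq B_1$. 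In sum, the paper's route is more laborious but keeps $\emptyset$-definability intact by hand; yours is shorter at the cost of invoking the parameter-invariance result, which is legitimate in this setting.
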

	\begin{proof}	
		Let $f\in Z^2(G,Z)$ be such that $\gamma([f])=([f_1],[f_2])$ for some $f_1\in
		Z_d^2(A,Z)$ and some $f_2\in Z_d^2(B,Z)$. So, there are some  $h_A:A\to Z$ and
		$h_B:B\to Z$ with $h_A(1)=h_B(1)=0$ such that $f|_{A\times A}=f_1+dh_A$ and $f|_{B\times B}=f_2+dh_B$. 
		
		\begin{claim}
		We may assume that $h_A|_{A\cap B} = h_B|_{A\cap B}$.
		\end{claim}
		\begin{claimproof}
		Let $C=A\cap B$ and let $\tilde h_A:A\to Z$ be the function 
		\[
		\tilde h_A(x)=(h_B\circ r_C)(x) + (h_A\circ r_{A\setminus C})(x),
		\]
		where for a subset $X\subset G$ the map $r_X:G\to X\cup\{1\}$ is defined as the identity on $X$ and $r_X(x)=1$ for $x\notin X$. 
		It is clear that $\tilde h_A$ and $h_B$ agree on $C$. Replacing $f_1$ by $\tilde{f}_1=f|_{A\times A}  - d\tilde{h}_A$, we obtain the claim. So, it suffices to prove that $\tilde f_1$ is definable. 
		
		Note first that $\tilde{f}_1|_{(A\setminus C)\times (A\setminus C)}= f_1|_{(A\setminus C)\times (A\setminus C)}$. In particular, we have that  $\tilde f_1$ is definable on $A\setminus C$. Moreover, since $C$ is finite, the function $\tilde{f}_1|_{C\times C}$ is also definable. Hence, it remains to check that both $\tilde f_1|_{C\times (A\setminus C)}$ and $\tilde f_1|_{(A\setminus C)\times C}$ are definable.
		
		Let $c\in C$ and $x\in A\setminus C$. An easy calculation yields that
		\[
		\tilde f_1(c,x) = f(c,x) - h_A(cx) + h_B(c) + h_A(x) = f_1(c,x) - h_A(c) + h_B(c).
		\]
		Since $C$ is finite, the maps $h_A|_C$ and $h_B|_C$ are definable (over $C$) and thus, the map $\tilde f_1|_{C\times (A\setminus C)}$ is definable, as so is $f_1|_{C\times (A\setminus C)}$. Likewise, one verifies that $\tilde f_1|_{(A\setminus C)\times C}$ is definable. It then follows that $\tilde f_1$ is definable. 
		\end{claimproof}
	
	Once we know that $h_A|_{A\cap B} = h_B|_{A\cap B}$, we claim:
	
	\begin{claim}
			We may assume that $f|_{A\times A}=f_1$ and $f|_{B\times B}=f_2$.
	\end{claim}
\begin{claimproof}
	Using the notation from Claim 1, it suffices to define the following maps from $G$
	to $Z$:
	\[
	h_1(x) = (h_A\circ r_A)(x), \ h_2(x) =(h_B\circ r_B)(x) \, 
	\text{ and }\, h_3(x) =(h_B\circ r_{A\cap B})(x).
	\]
	After replacing $f$ by the equivalent $2$-cocycle $f-dh_1-dh_2+dh_3$, we may
	assume that $f|_{A\times A}=f_1$ and $f|_{B\times B}=f_2$. 
\end{claimproof}
		
Consider the exact sequence $0\to Z\to G\times_f Z \stackrel{\pi}{\to}G\to 1$. Define $A_1=A\times_{f} Z$ and $B_1=B\times_{f} Z$, and note that both $A_1$ and $B_1$ are subgroups of $G\times_f Z$ and definable. Moreover, it is clear that $G\times_f Z = A_1\cdot B_1$ and $A_1 \cap  B_1=\pi\inv(A\cap B)$ is finite. 
		
We claim that $A_1^\circ$ and $B_1$ commute, which follows from the fact that $A$ and $B$ commute. Indeed, let $a\in A_1^\circ$ and $b\in B_1$ be given. By normality, we have $\pi([a,b])\in A\cap B$ and, as $A\cap B$ is a finite normal subgroup of $G$ and $G$ is definably connected, the intersection $A\cap B$ is a central subgroup of $G$.
This  implies that the map $A_1^\circ\to A\cap B$ given by $x\mapsto \pi([x,b])$ is an homomorphism, so it must be the trivial homomorphism since $A_1^\circ$ is definably connected. Thus, the homomorphism $A_1^\circ\to Z$ given by $x\mapsto [x,b]$ is well-defined and it is again the trivial homomorphism, so $[a,b]=1$ as desired.
		
It then follows that the map $A_1^\circ \times B_1\to  G\times_f Z$ given by $(x,y)\mapsto xy$ is a surjective homomorphism whose kernel is the finite group
\[
\Delta:=\left\{(x,x\inv)\in (G\times_f Z)\times (G\times_f Z) \ | \ x \in A_1^\circ\cap B_1\right\}.
\]
This induces an isomorphism $\gamma: (A_1^\circ \times B_1)/\Delta \to G\times_f Z$ given by $\gamma([(x,y)]) = xy$. Therefore, we obtain the following exact sequence 
\[
0\longrightarrow Z \stackrel{\mu_1}{\longrightarrow} (A_1^\circ \times B_1)/\Delta \stackrel{\pi_1}{\longrightarrow} G \longrightarrow 1,
\]
where the homomorphisms $\mu_1:Z\rightarrow (A_1^\circ \times B_1)/\Delta$ and $\pi_1:(A_1^\circ \times B_1)/\Delta\to G$ are $z\mapsto [((1,0),(1,z))]$ and $[(x,y)]\mapsto \pi(xy)$ respectively. This exact sequence is clearly equivalent  via $\gamma$ to the one given by $G\times_f Z$. Therefore, we obtain the second part of the statement by Lemma \ref{L:Natural_Int}, as $(A_1^\circ \times B_1)/\Delta$ is definable. 
\end{proof}
	
In the next result we see that it suffices to prove the conjecture for definably almost simple definable factors. By a {\em definable factor} $K$ of $G$ we mean that $K=N_1/N_2$ where both $N_2\unlhd N_1$ are definable subgroups of $G$. Also, we say that $K$ is {\em definably almost simple} if $K/Z(K)$ has no proper definable normal subgroup. 

We first prove a reduction of the conjecture to the case of definably almost simple definable groups. It is worth noticing that in the statement the finite group $Z$ is fixed.

\begin{theorem}\label{T:Reduction}
Let $G$ be a definably connected group definable in an o-minimal expansion of a real closed field and let $Z$ be a finite abelian group. Suppose the following: 
\begin{enumerate}
	\item[$(\ast)_G$] for every definably almost simple definable factor $K$ of $G$, the inclusion map $\Ext_d(K,Z)\hookrightarrow \Ext(K,Z)$ is an isomorphism. 
\end{enumerate} 
Then $\Ext_d(G,Z)\hookrightarrow \Ext(G,Z)$ is an isomorphism.
\end{theorem}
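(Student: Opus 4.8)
The plan is to prove the conclusion for the fixed group $Z$ by cutting $G$ into a solvable part and a semisimple part, in the spirit of Milnor's reduction \cite[Lem.\,4]{jM83}, and then gluing along the inflation-restriction sequence. Let $R$ denote the solvable radical of $G$, that is, its largest definably connected normal solvable definable subgroup, so that $\bar G:=G/R$ is semisimple, and consider the $\emptyset$-definable short exact sequence
\[
1\longrightarrow R \stackrel{\iota}{\longrightarrow} G \stackrel{\beta}{\longrightarrow} \bar G\longrightarrow 1.
\]
Since $R$ is solvable and definably connected, Theorem \ref{T:Sol} already yields that $\Ext_d(R,Z)\hookrightarrow\Ext(R,Z)$ is an isomorphism, so the radical contributes nothing new. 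The entire difficulty is thereby concentrated in the semisimple quotient $\bar G$ and in gluing the two pieces together through the diagram of Corollary \ref{C:Key_Diagram}.

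First I would settle the semisimple case, namely $\Ext_d(\bar G,Z)\cong\Ext(\bar G,Z)$. By the structure theory of semisimple definable groups, $\bar G$ is an almost direct product $\bar G=S_1\cdots S_k$ of definably connected, definably almost simple normal subgroups $S_i$, each of which is a definable factor of $G$; hence hypothesis $(\ast)_G$ applies to every $S_i$ and gives $\Ext_d(S_i,Z)\cong\Ext(S_i,Z)$. Passing to the honest direct product $P=S_1\times\cdots\times S_k$, an induction on $k$ using Lemma \ref{L:AlmostProd} (with $A=S_1$ and $B=S_2\cdots S_k$) shows $\Ext_d(P,Z)\cong\Ext(P,Z)$: any abstract class restricts to a definable class on each factor, and the lemma reassembles these into a definable class on the product. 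Finally, the canonical surjection $P\to\bar G$ has finite central kernel, so Lemma \ref{L:UpDown}(1) transfers the isomorphism from $P$ down to $\bar G$.

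It remains to recombine $R$ and $\bar G$, and here I expect the genuine obstacle. Injectivity of $\Ext_d(G,Z)\hookrightarrow\Ext(G,Z)$ is Corollary \ref{C:Ext-Inclusion}, so only surjectivity is at stake; equivalently, each abstract $f\in Z^2(G,Z)$ must be shown equivalent to a definable cocycle. A purely formal diagram chase in Corollary \ref{C:Key_Diagram} does \emph{not} close: the rows have only three terms and the restriction maps to $\Ext_d(R,Z)$ need not be surjective, so surjectivity of the middle vertical map cannot be read off from the two outer isomorphisms. My approach to bridge this gap is constructive. After modifying $f$ by a coboundary I may assume $f|_{R\times R}$ is $\emptyset$-definable, its class being definable by Theorem \ref{T:Sol}; then $R^{*}:=R\times_{f}Z$ is a definable central extension of $R$, normal in $\widehat G:=G\times_f Z$. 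Because $R$ is definably connected, $\mathrm{Hom}(R,Z)=0$ by Fact \ref{F:Connected}, so conjugation by $G$ lifts to a \emph{unique}, hence $\emptyset$-definable, action of $G$ on $R^{*}$. Choosing a definable section of $\beta$ together with a definable lift along $R^{*}\to R$ of the (definable) $R$-valued cocycle presenting $G$ as an extension of $\bar G$, the failure of this lift to satisfy the cocycle identity is measured by a single $\emptyset$-definable $Z$-valued $2$-cocycle $\eta$ on $\bar G$. The mere existence of the abstract extension $\widehat G$ forces $[\eta]=0$ in $\Ext(\bar G,Z)$, and the semisimple case established above upgrades this to $[\eta]=0$ already in $\Ext_d(\bar G,Z)$; correcting the lift by the resulting definable coboundary produces a definable group equivalent to $\widehat G$, whence $[f]\in\Ext_d(G,Z)$, as required. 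The delicate point — and the step I expect to demand the most care — is precisely this passage from abstract to definable vanishing of the obstruction $\eta$, which is what makes the prior resolution of the semisimple case indispensable.
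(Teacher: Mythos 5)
Your outer reductions are sound and close to the paper's: handling the semisimple part by writing it as an almost direct product of definably almost simple factors and invoking $(\ast)_G$ together with Lemma \ref{L:AlmostProd} is exactly the paper's Claim 1 (your detour through the honest product $P$ and Lemma \ref{L:UpDown}(1) is harmless), and your observation that the three-term diagram of Corollary \ref{C:Key_Diagram} does not close by a formal chase is correct and is precisely why more work is needed. The gap is in your bridging step, and it is a degree problem: the failure of a definable lift $\omega^*$ (along $R^*\to R$) of the $R$-valued factor set $\omega$ to satisfy the twisted $2$-cocycle identity is not a $2$-cocycle on $\bar G$ but a $Z$-valued \emph{$3$}-cocycle $\eta(x,y,z)=\omega^*(x,y)\,\omega^*(xy,z)\bigl({}^{s(x)}\omega^*(y,z)\,\omega^*(x,yz)\bigr)^{-1}$, the classical Eilenberg--MacLane obstruction for extensions with non-abelian kernel $R^*$. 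The existence of the abstract group $\widehat G$ tells you only that this definable $3$-cocycle is the coboundary of a (non-definable) $2$-cochain, namely the comparison cochain between $\omega^*$ and the abstract factor set coming from $\widehat G$; that comparison cochain is not a cocycle, and the obstruction class lives in $H^3(\bar G,Z)$, not in $\Ext(\bar G,Z)=H^2$. To ``correct the lift by a definable coboundary'' you would need the statement that a definable $3$-cocycle which is an abstract coboundary is a \emph{definable} coboundary --- an injectivity of definable-into-abstract cohomology in degree three. Neither $(\ast)_G$ nor the semisimple $H^2$-isomorphism you established gives this, and the paper develops no degree-$3$ analogue of Lemma \ref{L:h_def} (whose uniqueness trick uses $\mathrm{Hom}(G,Z)=0$ and has no counterpart one degree up, where the ambiguity is by $2$-cocycles). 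The torsor argument over $\Ext(\bar G,Z)$, which would let the semisimple case finish the proof, only becomes available \emph{after} one definable extension of $\bar G$ by $R^*$ with the given kernel data exists --- which is exactly what is missing; and you cannot manufacture one from a semidirect product, since $G\to G/R$ admits no definable homomorphic section in general (Levi subgroups are only ind-definable, cf.\,\cite{CP13}). A secondary, repairable point: ``unique, hence $\emptyset$-definable'' for the lifted $G$-action on $R^*$ is not automatic; it does hold, but via a uniform-in-parameters version of the explicit formula in Lemma \ref{L:h_def} applied to the definable family of $2$-cocycles $f'(r_1,r_2)-f'(gr_1g^{-1},gr_2g^{-1})$ on $R$.

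For contrast, the paper avoids the obstruction problem altogether by a different decomposition, inside an induction on $\dim(G)$: instead of the full solvable radical $R$ (whose $\Ext(R,Z)\cong Z^{\dim(R/N(R))}$ is typically nonzero, which is what forces your gluing problem), it first quotients by the torsion-free radical $N(G)$, where Proposition \ref{P:Torsion-free} gives $\Ext(N(G),Z)=0$, so inflation is an isomorphism and the chase \emph{does} close; it then picks a normal definably compact abelian subgroup $A$ and proves the missing surjectivity of restriction on images directly, using Conversano's decomposition $G=K\cdot L$ with $L$ torsion-free (hence definably contractible) and Proposition \ref{P:Def_Contractible} to extend definable classes from $K$ to $G$. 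That topological extension step is the genuine substitute for your degree-$3$ correction, and your proposal has no replacement for it.
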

\begin{proof}
Assume, as we may, that $G$ is infinite and proceed by induction on the
		dimension of the group. For $\dim(G)=1$, the group $G$ is abelian by
		\cite[Cor.\,2.15]{aP88} and therefore the result follows by
		\cite[Thm.\,2.9]{BPP10}. We consider the general case and observe that we may
		assume that $G$ is not solvable by Theorem \ref{T:Sol}.
		
		\begin{claim}\label{Clm:T1}
			If $K$ is a semisimple definable factor of $G$, then
			$\Ext_d(K,Z)\hookrightarrow \Ext(K,Z)$ is an isomorphism. 
		\end{claim}
		\begin{claimproof}
			By \cite[Thm.\,2.8]{PPS00} we know that there are some definably simple
			definable groups $\bar H_1,\ldots,\bar H_m$ such that $K/Z(K)$ is the direct
			product $\bar H_1\times \ldots\times\bar H_m$. Let $H_i$ be the normal subgroup
			of $K$ containing $Z(K)$ such that $H_i/Z(K)$ equals $\bar H_i$. So, we obtain
			that $K$ is the almost direct product of $H_1,\ldots,H_m$. Hence, since $(\ast)_G$
			implies that $\Ext_d(H_i,Z)\hookrightarrow \Ext(H_i,Z)$ is an isomorphism,
			applying Lemma \ref{L:AlmostProd} we get that  $\Ext_d(K,Z)\hookrightarrow
			\Ext(K,Z)$ is an isomorphism as well. 
		\end{claimproof}
		
		Assume first that $G$ is definably compact. In this case, by
		\cite[Cor.\,6.4]{HPP11} the derived subgroup $[G,G]$ of $G$ is definable,
		definably connected and semisimple and moreover $G$ is the almost direct product
		of $Z(G)^\circ$ and $[G,G]$, that is, $G=Z(G)^\circ\cdot [G,G]$ with
		$Z(G)^\circ\cap[G,G]$ finite. 
		Also, by Claim \ref{Clm:T1} we may suppose that $Z(G)$ is infinite, as otherwise
		$G=[G,G]$ and so we obtain the statement. Note in addition that $(\ast)_{[G,G]}$ holds, since every definable factor of $[G,G]$ is a definable factor of $G$. Therefore, by the inductive
		hypothesis, together with Lemma \ref{L:AlmostProd}, we obtain the result.
		We are thus left with the non definably compact case. 
		
		Now, suppose that $G$ is arbitrary but not definably compact and consider $N(G)$, the maximal normal definable torsion-free subgroup of $G$.
		\begin{claim}\label{Clm:T2}
			We may assume that  $N(G)$ is trivial.
		\end{claim}
		\begin{claimproof}
			Otherwise, we have that $N(G)$ is infinite, since it is definably connected, and thus $\dim(G/N(G))<\dim(G)$. Clearly, every definable factor of $G/N(G)$ is a definable factor of $G$, so condition $(\ast)_{G/N(G)}$ holds. Hence, Corollary \ref{C:Key_Diagram}, together
			with the inductive hypothesis, yields the existence of the following commutative
			diagram
			\[
			\begin{tikzcd}
				0 \arrow{r}  & \Ext(G/N(G),Z) \arrow{r} & \Ext(G,Z)      \arrow{r}     &
				\Ext(N(G),Z)\stackrel{\ref{P:Torsion-free}}{=}0 \\
				0 \arrow{r} & \Ext_d(G/N(G),Z) \arrow{r} \arrow[hookrightarrow]{u}{\cong} &
				\Ext_d(G,Z)      \arrow{r} \arrow[hookrightarrow]{u}  &
				\Ext_d(N(G),Z)\stackrel{\ref{P:Torsion-free}}{=}0 
				\arrow[hookrightarrow]{u}{\cong}
			\end{tikzcd}
			\]
			where both rows are exact. Thus, the inclusion $\Ext_d(G,Z)\hookrightarrow \Ext(G,Z)$ is an
			isomorphism and hence every finite central extension of $G$ is naturally
			interpretable. 
		\end{claimproof}
		
		Observe that, by Claim \ref{Clm:T1}, we may assume that $G$ is not semisimple.
		So, there is an infinite definably connected definable normal abelian subgroup
		$A$ of $G$, which is necessarily definably compact, by Claim \ref{Clm:T2} and
		\cite[Prop.\,2.2]{CP12}. 
		Consider the exact sequence $1\to A\stackrel{\iota}{\to} G\to G/A\to 1$. As $(\ast)_{G/A}$ holds, the
		inductive hypothesis and Corollary \ref{C:Key_Diagram} yield the following
		commutative diagram with exact rows:
		\[
		\begin{tikzcd}
			0 \arrow{r}  & \Ext(G/A,Z) \arrow{r} & \Ext(G,Z)      \arrow{r}{\iota^*}     &
			\Ext(A,Z) \\
			0 \arrow{r} & \Ext_d(G/A),Z) \arrow{r} \arrow[hookrightarrow]{u}{\cong} &
			\Ext_d(G,Z)      \arrow{r}{\iota^*_d} \arrow[hookrightarrow]{u}  & \Ext_d(A,Z)
			\arrow[hookrightarrow]{u}{\cong}
		\end{tikzcd}
		\]
		So, to prove that $\Ext_d(G,Z)\hookrightarrow \Ext(G,Z)$ is an isomorphism it
		remains to show that the map $\im(\iota^*_d)\to \im(\iota^*)$ is surjective.
		So, let $[g]\in \im(\iota^*)$ be arbitrary and let $f\in Z^2(G,Z)$ be such that
		$[f|_{A\times A}]=[g]$. 
		
Due to Claim \ref{Clm:T2}, by \cite[Thm.\,1.2]{aC14} there is a maximal definably compact non-trivial subgroup $K$ of $G$, which is definably connected, and a definable torsion-free non-trivial subgroup $L$ of $G$ such that $G=K\cdot L$ and $K\cap L=\{1\}$. Note that the maximality of $K$ forces $A\le K$, since $AK$ is a definably compact definable subgroup. Furthermore, the torsion-free subgroup $L$ of $G$ is definably contractible, by \cite[Cor.\,5.7]{PS05}. 
		
Note now that $(\ast)_K$ is trivially satisfied. So, the induction hypothesis implies that $\Ext_d(K,Z)\hookrightarrow \Ext(K,Z)$ is an isomorphism. Thus, we can apply Proposition \ref{P:Def_Contractible}
		 which yields the existence of some definable $f'\in Z^2_d(G,Z)$ such that
		$[f'|_{K\times K}] = [f|_{K\times K}]$. So, we obtain that $[f'|_{A\times A}] =
		[f|_{A\times A}]$  which implies  $\iota_d^*([f'])=[g]$. This shows that
		$\im(\iota^*_d)\to \im(\iota^*)$ is surjective, finishing the proof.
	\end{proof}

As a consequence, we also obtain a reduction of the conjecture to definably simple definable groups. Note that in this case we need the group $Z$ to vary.

\begin{cor}\label{C:Reduction}
Let $G$ be a definably connected group definable in an o-minimal expansion of a real closed field. Suppose the following: 
\begin{enumerate}
	\item[$(\star)_G$] for every definably simple definable factor $S$ of $G$, the natural inclusion map $\Ext_d(S,Z)\hookrightarrow \Ext(S,Z)$ is an isomorphism for every finite abelian group $Z$. 
\end{enumerate} 
Then $\Ext_d(G,Z)\hookrightarrow \Ext(G,Z)$ is an isomorphism for every finite abelian group $Z$. 
\end{cor}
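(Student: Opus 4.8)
The plan is to derive condition $(\ast)_G$ of Theorem~\ref{T:Reduction} from the hypothesis $(\star)_G$ and then invoke that theorem. So I fix a finite abelian group $Z$ and let $K$ be a definably almost simple definable factor of $G$; I must show that $\Ext_d(K,Z)\hookrightarrow\Ext(K,Z)$ is an isomorphism. Set $S:=K/Z(K)$. Since $Z(K)$ is a definable normal subgroup of $K$, the quotient $S$ is again a definable factor of $G$, and it is definably simple by the definition of definably almost simple; hence $(\star)_G$ applies to $S$, giving that $\Ext_d(S,Z')\hookrightarrow\Ext(S,Z')$ is an isomorphism for \emph{every} finite abelian group $Z'$. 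If $S$ happens to be abelian then $[K,K]\subseteq Z(K)$, so $K$ is solvable and Theorem~\ref{T:Sol} already yields the isomorphism; thus I may assume $S$ is non-abelian, hence perfect and centreless.

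Next I would exhibit the structure of $K$ relative to $S$. As $S$ is perfect we have $K=[K,K]\cdot Z(K)$, and since $K/Z(K)=S$ is definably connected it follows that $K=K^\circ\cdot Z(K)$, so $H:=[K,K]=[K^\circ,K^\circ]$ is definably connected; being a commutator subgroup it is also perfect, and the central factor $Z(K)$ drops out of all commutators so that $H/(H\cap Z(K))\cong S$ with $H\cap Z(K)$ central in $H$. The key structural point is that $H\cap Z(K)$ is finite, so that $H$ is a genuine finite central extension of $S$. Granting this, and assuming as I may that $K$ is definably connected, the connected abelian group $A:=Z(K)^\circ$ surjects onto the abelianisation $K/[K,K]$, whence $K=H\cdot A$ is an almost direct product of the definably connected normal subgroups $H$ and $A$ with $H\cap A$ finite.

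With this decomposition the conclusion follows from the tools already available. Applying Lemma~\ref{L:UpDown}(2) to the finite central extension $0\to H\cap Z(K)\to H\to S\to 1$ and using the isomorphism for $S$, I obtain that $\Ext_d(H,Z')\hookrightarrow\Ext(H,Z')$ is an isomorphism for every $Z'$, in particular for our $Z$; and since $A$ is definably connected abelian, \cite[Thm.\,2.9]{BPP10} together with Lemma~\ref{L:Natural_Int} gives that $\Ext_d(A,Z)\hookrightarrow\Ext(A,Z)$ is an isomorphism. Now for an arbitrary class $[f]\in\Ext(K,Z)$ its restrictions to $H$ and to $A$ admit definable representatives by the previous two isomorphisms, so Lemma~\ref{L:AlmostProd} yields $[f]\in\Ext_d(K,Z)$; combined with the injectivity from Corollary~\ref{C:Ext-Inclusion} this shows $\Ext_d(K,Z)\cong\Ext(K,Z)$, which is exactly $(\ast)_G$. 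Theorem~\ref{T:Reduction} then finishes the proof.

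I expect the main obstacle to be precisely the structural claim of the second paragraph: that $H\cap Z(K)$ is finite, i.e.\ that the perfect definable group $H=[K,K]$ is a finite central extension of the definably simple group $S$, and that $K$ is the almost direct product of $H$ with the connected component of its centre. This rests on the structure theory of definable groups in o-minimal expansions of real closed fields (perfectness of definably simple groups together with the finiteness of their definable central extensions, as reflected in the universal covering of Section~\ref{s:Def_Cohom}); by contrast, the cohomological transfer along the two factors is a routine application of Lemmas~\ref{L:UpDown} and~\ref{L:AlmostProd}.
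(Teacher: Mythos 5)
Your proposal follows the same skeleton as the paper's proof: verify condition $(\ast)_G$ of Theorem \ref{T:Reduction} by feeding $(\star)_G$ into Lemma \ref{L:UpDown}(2) (correctly letting $Z$ vary, which is exactly why that lemma's hypothesis quantifies over all finite abelian groups), and then invoke Theorem \ref{T:Reduction}. The difference is local. The paper applies Lemma \ref{L:UpDown}(2) directly to the sequence $0\to Z(K)\to K\to K/Z(K)\to 1$, implicitly reading ``definably almost simple'' so that $Z(K)$ is finite --- which holds for every factor to which $(\ast)_G$ is actually applied inside the proof of Theorem \ref{T:Reduction}, since those factors sit inside semisimple groups. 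You instead refuse to assume $Z(K)$ finite and insert a decomposition $K=[K,K]\cdot Z(K)^\circ$, treating $H=[K,K]$ via Lemma \ref{L:UpDown}(2), the connected abelian factor via \cite[Thm.\,2.9]{BPP10}, and gluing with Lemma \ref{L:AlmostProd}; this gluing is precisely the mechanism the paper itself uses in Claim \ref{Clm:T1}, so your extra layer is consistent with the available toolkit, and it genuinely buys something under the literal definition of definably almost simple, where the centre can be infinite (e.g.\ $K=S\times T$ with $S$ definably simple and $T$ a definably compact torus satisfies the definition with $Z(K)=T$ infinite). Your handling of abelian $K/Z(K)$ via Theorem \ref{T:Sol} is also fine. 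The one real issue is the structural claim you flag but do not prove: that $H\cap Z(K)=Z(H)$ is finite, so that $H$ is a genuine finite central extension of $S$, $H$ is definable, and $K=H\cdot Z(K)^\circ$ is an almost direct product of definably connected normal subgroups. This is true, but it is not free: it is essentially the theory of definable central extensions of semisimple groups of Hrushovski, Peterzil and Pillay \cite{HPP11} (definability of the commutator subgroup and finiteness of its centre when $K/Z(K)$ is semisimple), which the paper cites for other purposes; with that citation added your argument is complete. Alternatively, under the finite-centre reading of almost simple --- the one the paper intends, and the only one needed for Theorem \ref{T:Reduction} --- your second paragraph collapses ($Z(K)^\circ$ is trivial and $K=H$), and your proof becomes verbatim the paper's three-line argument.
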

\begin{proof}
Assume $(\star)_G$ and let $K$ be a definably almost simple factor of $G$. Lemma \ref{L:UpDown} yields that $\Ext_d(K,Z)\hookrightarrow \Ext(K,Z)$ is an isomorphism for every finite abelian group $Z$. So, condition $(\ast)_G$ of Theorem \ref{T:Reduction} is satisfied for a fixed $Z$. Hence, Theorem \ref{T:Reduction} implies that $\Ext_d(G,Z)\hookrightarrow \Ext(G,Z)$ is an isomorphism for every finite abelian group $Z$. 
\end{proof}

We conclude with the following reduction for definably compact semisimple definable groups. Note before proceeding that, by the proof of  \cite[Prop.\,5.1.(iv)]{CP13}, the fundamental group of a definably compact semisimple definable group is finite. Thus, for a definably compact definable semisimple definable group $G$, the kernel of the universal covering map $p_G:\widetilde G\to G$ is finite and therefore the universal covering $\widetilde G$ is definable. Therefore, our last result is a direct consequence of Lemma \ref{L:UpDown}.
	\begin{cor}
	Let $G$ be a definably compact semisimple definable group, let $\widetilde{G}$ be its universal covering and let $Z$ be a finite abelian group. If $\Ext_d(\widetilde{G},Z)\hookrightarrow \Ext(\widetilde G,Z)$ is an isomorphism, then so is $\Ext_d(G,Z)\hookrightarrow \Ext(G,Z)$.\qed
\end{cor}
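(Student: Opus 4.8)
The plan is to apply Lemma \ref{L:UpDown}(1) directly to the universal covering of $G$. As recalled immediately before the statement, since $G$ is definably compact and semisimple its o-minimal fundamental group $\pi_1(G)$ is finite, so the kernel of the universal covering map $p_G:\widetilde G\to G$ is finite and consequently $\widetilde G$ is definable. This furnishes a short exact sequence
\[
0\longrightarrow \ker(p_G) \longrightarrow \widetilde G \stackrel{p_G}{\longrightarrow} G \longrightarrow 1
\]
of definable groups, which is exactly the shape required to invoke the lemma.

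First I would verify that this sequence meets the hypotheses of Lemma \ref{L:UpDown}, namely that $\widetilde G$ and $G$ are definably connected definable groups and that $F:=\ker(p_G)$ is a finite central subgroup of $\widetilde G$. Definability of both groups has just been noted; definable connectedness of $G$ is part of the hypotheses (a definably compact semisimple definable group is definably connected), while $\widetilde G$ is definably connected as a universal covering. The finiteness of $F$ is the key input provided by the finiteness of $\pi_1(G)$, and its centrality follows from Fact \ref{F:Universal0}, where it is observed that the kernel of a universal covering map is central, being a $0$-dimensional locally definable subgroup.

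With the hypotheses in place, I would set $G_0=\widetilde G$ in Lemma \ref{L:UpDown}(1). The assumption of the corollary is precisely that $\Ext_d(\widetilde G,Z)\hookrightarrow \Ext(\widetilde G,Z)$ is an isomorphism, so part (1) of the lemma immediately yields that $\Ext_d(G,Z)\hookrightarrow \Ext(G,Z)$ is an isomorphism, as desired.

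Since the statement is essentially an instance of Lemma \ref{L:UpDown}, there is no genuine obstacle beyond assembling the correct exact sequence; the only points requiring care are confirming that $\widetilde G$ is definable and that $\ker(p_G)$ is central, both of which are supplied by the preceding discussion and by Fact \ref{F:Universal0}.
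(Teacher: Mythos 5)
Your proposal is correct and follows exactly the paper's argument: the paper likewise notes that $\pi_1(G)$ is finite (via the proof of Proposition 5.1.(iv) of Conversano--Pillay), so that $\widetilde G$ is definable with finite central kernel $\ker(p_G)$, and then deduces the corollary as a direct consequence of Lemma \ref{L:UpDown}(1) applied to the sequence $0\to\ker(p_G)\to\widetilde G\stackrel{p_G}{\to}G\to 1$. Your additional verifications of definable connectedness and centrality of the kernel (via Fact \ref{F:Universal0}) are exactly the points the paper leaves implicit.
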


\end{document}